\documentclass[english]{amsart}

\usepackage{babel}
\usepackage{amsmath}
\usepackage{amsthm}
\usepackage{amssymb}
\usepackage[utf8]{inputenc}
\usepackage{graphicx}
\usepackage[backend=biber,style=numeric,citestyle=numeric,sorting=nyt,maxnames=99,minnames=99]{biblatex}
\usepackage{xurl}
\AtBeginBibliography{\sloppy}
\usepackage{ifthen}
\usepackage{csquotes}
\usepackage[colorlinks = true, urlcolor = blue, linkcolor = blue, citecolor = blue, breaklinks = true]{hyperref}
\usepackage{breakcites}
\usepackage{cleveref}
\usepackage[dvipsnames]{xcolor}
\usepackage{tikz-cd}
\usepackage{quiver}
\usetikzlibrary{spath3, intersections}
\usetikzlibrary{
  knots,
  hobby,
  decorations.pathreplacing,
  shapes.geometric,
  calc
}

\tikzset{
  knot diagram/every strand/.append style={
    ultra thick,
    red
  },
  show curve controls/.style={
    postaction=decorate,
    decoration={show path construction,
      curveto code={
        \draw [blue, dashed]
        (\tikzinputsegmentfirst) -- (\tikzinputsegmentsupporta)
        node [at end, draw, solid, red, inner sep=2pt]{};
        \draw [blue, dashed]
        (\tikzinputsegmentsupportb) -- (\tikzinputsegmentlast)
        node [at start, draw, solid, red, inner sep=2pt]{}
        node [at end, fill, blue, ellipse, inner sep=2pt]{}
        ;
      }
    }
  },
  show curve endpoints/.style={
    postaction=decorate,
    decoration={show path construction,
      curveto code={
        \node [fill, blue, ellipse, inner sep=2pt] at (\tikzinputsegmentlast) {}
        ;
      }
    }
  }
}
\usepackage{url}
\usepackage{dcolumn}
\usepackage{xcolor}
\usepackage{tikz}
\usetikzlibrary{decorations.markings}
\usepackage{mathtools}
\usepackage{nicematrix}
\usepackage{mathrsfs}
\usepackage{booktabs}
\usepackage{pdfpages}
\usepackage{bbm}
\usepackage{enumitem}
\usepackage{scalerel}
\usepackage[left=3cm,right=3cm]{geometry}
\usepackage{stackengine,wasysym}

\usetikzlibrary{matrix,arrows,chains}

\newtheorem{prop}{Proposition}[section]
\newtheorem{thm}[prop]{Theorem}
\newtheorem*{theorem*}{Theorem}
\newtheorem{cor}[prop]{Corollary}
\newtheorem{lem}[prop]{Lemma}

\theoremstyle{definition}
\newtheorem{rem}[prop]{Remark}

\newtheorem{de}[prop]{Definition}
\newtheorem{ex}[prop]{Example}

\newtheorem{con}[prop]{Construction}

\newtheorem*{quest*}{Question}
\newtheorem*{cor*}{Corollary}

\newcommand\reallywidetilde[1]{\ThisStyle{%
  \setbox0=\hbox{$\SavedStyle#1$}%
  \stackengine{-.1\LMpt}{$\SavedStyle#1$}{%
    \stretchto{\scaleto{\SavedStyle\mkern.2mu\AC}{.5150\wd0}}{.4\ht0}%
  }{O}{c}{F}{T}{S}%
}}

\makeatletter
\tikzcdset{
  open/.code     = {\tikzcdset{hook, circled};},
  closed/.code   = {\tikzcdset{hook, slashed};},
  open'/.code    = {\tikzcdset{hook', circled};},
  closed'/.code  = {\tikzcdset{hook', slashed};},
  circled/.code  = {\tikzcdset{markwith = {\draw (0,0) circle (.375ex);}};},
  slashed/.code  = {\tikzcdset{markwith = {\draw[-] (-.0ex,-.4ex) -- (.0ex,.4ex);}};},
  markwith/.code ={
    \pgfutil@ifundefined%
    {tikz@library@decorations.markings@loaded}%
    {\pgfutil@packageerror{tikz-cd}{You need to say %
      \string\usetikzlibrary{decorations.markings} to use arrows with markings}{}}{}%
    \pgfkeysalso{/tikz/postaction = {
      /tikz/decorate,
      /tikz/decoration={markings, mark = at position 0.5 with {#1}}}
    }
  },
}
\makeatother

\DeclareMathOperator{\Z}{\mathbb{Z}}

\DeclareMathOperator{\C}{\mathbb{C}}
\DeclareMathOperator{\A}{\mathbb{A}}
\DeclareMathOperator{\PP}{\mathbb{P}}
\DeclareMathOperator{\T}{\mathcal{T}}
\renewcommand{\P}{\PP}
\DeclareMathOperator{\G}{\mathbb{G}}
\DeclareMathOperator{\Set}{Set}

\DeclareMathOperator{\id}{id}

\DeclareMathOperator{\Sh}{Sh}

\DeclareMathOperator{\Hom}{Hom}

\newcommand{\op}{{\mathrm{op}}}

\renewcommand{\lim}{\limi}
\DeclareMathOperator{\limi}{lim}

\DeclareMathOperator{\copro}{\scaleobj{0.8}{\amalg}}
\DeclareMathOperator{\const}{const}

\DeclareMathOperator{\Mod}{Mod}

\DeclareMathOperator{\Eq}{Eq}
\DeclareMathOperator{\Top}{Top}

\DeclareMathOperator{\Ho}{Ho}

\DeclareMathOperator{\Spc}{Spc}
\newcommand{\Cov}[1][]{\ifthenelse{\equal{#1}{}}
    {\mathrm{Cov}}
    {#1\text{-}\mathrm{Cov}}
}
\DeclareMathOperator{\tCov}{{\mathrm{tCov}}}

\DeclareMathOperator{\sSet}{sSet}

\DeclareMathOperator{\Sing}{Sing}

\DeclareMathOperator{\Map}{Map}

\DeclareMathOperator{\Sm}{Sm}

\DeclareMathOperator{\Spec}{Spec}

\newcommand{\et}{{\mathrm{\acute{e}t}}}

\DeclareMathOperator{\Ao}{\mathbb{A}^1}

\DeclareMathOperator{\Shv}{Sh}
\DeclareMathOperator{\PShv}{PSh}

\DeclareMathOperator{\fib}{fib}

\DeclareMathOperator{\mot}{mot}
\newcommand{\Lmot}[1][]{\ifthenelse{\equal{#1}{}}
    {L_{\mathrm{mot}}}
    {L_{\mathrm{mot}, #1}}
}

\DeclareFieldFormat{postnote}{#1}
\DeclareFieldFormat{multipostnote}{#1}

\graphicspath{ {./Bilder/} }

\tikzset{double line with arrow/.style args={#1,#2}{decorate,decoration={markings,%
mark=at position 0 with {\coordinate (ta-base-1) at (0,1pt);
\coordinate (ta-base-2) at (0,-1pt);},
mark=at position 1 with {\draw[#1] (ta-base-1) -- (0,1pt);
\draw[#2] (ta-base-2) -- (0,-1pt);
}}}}

\tikzset{%
    symbol/.style={%
        draw=none,
        every to/.append style={%
            edge node={node [sloped, allow upside down, auto=false]{$#1$}}}
    }
}

\tikzset{
  open/.style = {decoration = {markings, mark = at position 0.5 with { \node[transform shape, scale = .8] {$\circ$}; } }, postaction = {decorate} }
}

\begin{document}

\title{On the notion of $\A^1$-coverings}

\author[Giustetto]{Marco Giustetto}
\address[Giustetto]{
Department of Mathematics, Universit\"{a}t Osnabr\"{u}ck, 
Albrechtstrasse 28a, 49076 Osnabr\"{u}ck,
Germany}
\email{marco.giustetto@uni-osnabrueck.de}

\author[Wittich]{Thor Wittich}
\address[Wittich]{
Department of Mathematics, Universit\"{a}t Osnabr\"{u}ck, 
Albrechtstrasse 28a, 49076 Osnabr\"{u}ck,
Germany}
\email{thor.wittich@uni-osnabrueck.de}

\begin{abstract}
We introduce the notion of $\A^1$-$n$-coverings and begin the development of their theory over an arbitrary base scheme $S$, in analogy to that of higher coverings in an $\infty$-topos. We show that for $n=0$, our notion recovers Morel's $\A^1$-coverings between simplicial sheaves. Under suitable assumptions, we then provide a Galois-theoretic classification of $\A^1$-$n$-coverings. Finally, we answer a question of Morel concerning a geometric criterion for $\A^1$-coverings in the negative.

\end{abstract}
\maketitle

{ \hypersetup{hidelinks} \tableofcontents }

\section{Introduction}

Covering spaces are a fundamental gadget in various flavors of geometry, such as algebraic topology or algebraic geometry. For a well-behaved base space or scheme $X$, these can be classified in terms of $\pi_1(X)$-sets, where $\pi_1(X)$ stands for the respective notions of fundamental groups. Therefore, covering spaces provide a geometric interpretation of fundamental groups, serve as means of organizing local systems, and have found ubiquitous uses.

\hfill

In \cite{morel}, Morel developed the basic theory of $\A^1$-coverings for simplicial Nisnevich sheaves on $\Sm_k$, where $k$ is a perfect base field of characteristic not $2$. Taking inspiration from the unique homotopy lifting theorem in topology, these coverings are defined by having the unique right lifting property against acyclic $\A^1$-cofibrations or, equivalently, against $\A^1$-equivalences. For example, the $\G_m$-torsor $\A^{n+1} \setminus \{ 0 \} \twoheadrightarrow \P^n$ is shown to be the universal $\A^1$-covering for all $n \geq 2$, yielding $\underline{\pi}_1^{\A^1}(\P^n) \cong \G_m$; see Theorem 7.13 of \cite{morel}. This theory was for instance used by Wendt and the second-named author in their article on algebraic knots \cite{ww} to define a novel rectification invariant.

\hfill

In this article, we study $\A^1$-coverings over more general base schemes $S$ and define their higher analogues. To this end, we first recall the notion of $n$-coverings in $\infty$-topoi, which are defined as $n$-truncated maps, where $0 \leq n \leq \infty$. This notion was studied by Constantin \cite{high_cov} in more detail, whose preprint appeared while this work was in preparation. We then define $\A^1$-$n$-coverings as $n$-truncated maps which are right orthogonal to $\A^1$-equivalences. Our first main result identifies $\A^1$-$0$-coverings with Morel's aforementioned notion of $\A^1$-coverings.
\begin{theorem*}[Theorem \ref{morel_a1_cov}]
Let $S$ be a qcqs base scheme of finite dimension. For a simplicial Nisnevich sheaf $B$ on $\Sm_S$, there is an equivalence of $1$-categories
$$\Cov_{\A^1}(B) \xrightarrow{\sim} \Cov[0]_{\A^1}(L_W(B))$$
where $L_W \colon \Shv(\Sm_S,\sSet) \to \Shv(\Sm_S)$ is the Dwyer-Kan localization functor to the underlying $\infty$-topos.
\end{theorem*}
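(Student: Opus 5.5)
The plan is to build the functor $\Cov_{\A^1}(B)\to\Cov[0]_{\A^1}(L_W(B))$ as $(Y\to B)\mapsto (L_W Y\to L_W B)$ and check that it is well-defined, fully faithful and essentially surjective. Throughout I work in the injective Nisnevich-local model structure on $\Shv(\Sm_S,\sSet)$, so every object is cofibrant. I also pass to slices: the slice model structure on $\Shv(\Sm_S,\sSet)_{/B}$ presents the $\infty$-categorical slice $\Shv(\Sm_S)_{/L_W B}$. The point of using slices is that $B$ need not be fibrant, and this lets me compute derived mapping spaces $\Map_{/B}(-,-)$ without fibrantly replacing $B$.

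\emph{Well-definedness.} Let $f\colon Y\to B$ be a Morel $\A^1$-covering, i.e. it has the unique right lifting property against monomorphisms that are $\A^1$-equivalences.
\begin{enumerate}
\item Existence of lifts makes $f$ an $\A^1$-fibration, in particular a local fibration, so $f$ is a fibrant object of the slice.
\item Unique lifting against the acyclic cofibrations $A\times\Delta^{\{0\}}\hookrightarrow A\times\Delta^1$ forces the map $Y^{\Delta^1}\to Y\times_B B^{\Delta^1}$ to be an isomorphism. From this I deduce that the homotopy fibres of $f$ are discrete, so $L_W f$ is $0$-truncated.
\item Since $L_W f$ is $0$-truncated, orthogonality against an $\A^1$-equivalence $A\to A'$ over $B$ reduces to a bijection
\[
\pi_0\Map_{/B}(A',Y)\to \pi_0\Map_{/B}(A,Y).
\]
After replacing $A\to A'$ by a mono via a mapping cylinder, surjectivity comes from the existence of lifts. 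Injectivity comes from lifting against the acyclic cofibration $A\times\Delta^1\cup A'\times\partial\Delta^1\hookrightarrow A'\times\Delta^1$.
\end{enumerate}

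\emph{Full faithfulness.} Because $Y'$ is fibrant in the slice and $Y$ is cofibrant, $\pi_0\Map_{/L_W B}(L_W Y,L_W Y')$ is the set of maps $Y\to Y'$ over $B$ modulo homotopy over $B$. The isomorphism $Y'^{\Delta^1}\cong Y'\times_B B^{\Delta^1}$ says that any homotopy over $B$ is constant. So homotopic maps are equal, and $\Hom_B(Y,Y')$ is exactly this $\pi_0$.

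\emph{Essential surjectivity.} This is the step I expect to be the main obstacle. Given a $0$-truncated $p\colon E\to L_W B$ right orthogonal to $\A^1$-equivalences, slice rectification gives $p\simeq L_W q$ for some fibration $q\colon Y\to B$. However, $q$ only has lifts that are unique up to homotopy; a fibre such as a contractible Kan complex that is not a point breaks strict uniqueness. I would therefore replace $q$ by a strict model $\bar q\colon\bar Y\to B$, built as follows.
\begin{enumerate}
\item Form $\bar Y$ as the Nisnevich sheafification of the quotient of $Y$ by the fibrewise homotopy relation, i.e. identify simplices over the same simplex of $B$ that are connected by a vertical homotopy. Equivalently, $\bar Y$ is the strict covering associated to the local system of sets $\pi_0(\mathrm{hofib})$ over $B$.
\item Check that $\bar q$ has unique path lifting, i.e. $\bar Y^{\Delta^1}\cong\bar Y\times_B B^{\Delta^1}$.
\item Check that $Y\to\bar Y$ is a local weak equivalence, using that the homotopy fibres are discrete stalkwise.
\end{enumerate}
Granting these, existence of strict lifts for $\bar q$ against $\A^1$-acyclic monos follows from orthogonality together with the homotopy lifting property of the fibration. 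Uniqueness follows because two lifts are homotopic rel $A$ over $B$ by orthogonality, and unique path lifting turns that homotopy into an equality. Hence $\bar q$ is a Morel $\A^1$-covering with $L_W\bar q\simeq p$. The delicate points are verifying that the sheafified quotient remains a local fibration and is locally equivalent to $Y$; I would check both on Nisnevich stalks, where they reduce to the classical statement that a Kan fibration with discrete fibres is equivalent to its covering-space model.
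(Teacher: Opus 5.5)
Your well-definedness and full-faithfulness steps are sound and follow the paper closely: your isomorphism $Y^{\Delta^1}\cong Y\times_B B^{\Delta^1}$ does the work of Proposition \ref{W_and_simp_cov}. The gap is in essential surjectivity, at exactly the point you flag. To turn $\infty$-categorical lifts into strict ones you need $\bar q$ to be a fibration of the local model structure, and you plan to verify this on Nisnevich stalks. But fibrancy in the local model structure is not a stalkwise property. A map that is a Kan fibration on every stalk need not lift against monic local weak equivalences. For example, the nerve $B\mathbb{G}_m\to *$ is a stalkwise Kan fibration but not a local fibration: its sections over $\mathbb{P}^1_S$ form a connected simplicial set, while $\mathrm{Pic}(\mathbb{P}^1_S)\neq 0$. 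So the stalkwise reduction only shows that $\bar q$ is stalkwise a simplicial covering, i.e.\ that $\bar Y_n\cong \bar Y_0\times_{B_0}B_n$ as sheaves. It does not supply the homotopy lifting property you then invoke.

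The missing idea is to exploit uniqueness of lifts, which is special to coverings. A map of simplicial sheaves that is stalkwise a simplicial covering is in fact right orthogonal to every local weak equivalence $A\to A'$.
\begin{itemize}
\item Uniqueness: two lifts agree on every stalk, since simplicial coverings are orthogonal to weak equivalences.
\item Existence: each section of $A'_0$ is, on a Nisnevich neighborhood of each point, joined by a finite zigzag of $1$-simplices to a section coming from $A$. Transporting along this zigzag by unique path lifting gives local lifts. These agree with the unique stalkwise lifts and therefore glue.
\end{itemize}
With this lemma $\bar q$ is a $W$-covering, and your remaining steps go through. Strict lifts then come for free, since homotopies over $B$ into $\bar Y$ are constant.

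The paper avoids the explicit quotient. It factors $q$ via the small object argument into a map $g$ in the saturation of $W$ followed by a $W$-covering, which is fibrant by construction. It then only checks that $g$ is a stalkwise weak equivalence, using Corollary \ref{trunc_sat_to_eq}. The $\A^1$-part is handled separately by Ken Brown's lemma and Theorem \ref{morel_a1_cov}.

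Finally, $\Cov[0]_{\A^1}$ requires $\Lmot(p)$, not $p$, to be $0$-truncated. Your well-definedness step therefore also needs Proposition \ref{trunc_before_after}.
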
 
This comparison result justifies our definition as an extension of the existing theory and enables us to interpret Morel's $\A^1$-coverings intrinsically in the $\infty$-topos $\Shv(\Sm_S)$. Along the way we also establish that $\A^1$-$n$-coverings only depend on the motivic homotopy type of the base $B$; see Corollary \ref{middle_eq}. While this article was in preparation, this result was independently obtained for $n = 0$ by Maity in Section 3.4 of \cite{maity}, who also proposed $\A^1$-$0$-coverings as an $\infty$-categorical variant of Morel's notion.

\hfill

In Remark 7.10 of \cite{morel}, Morel stated a classification of $\A^1$-coverings of a pointed $\A^1$-connected simplicial sheaf $X$ on $\Sm_k$ in terms of $\A^1$-invariant sheaves of sets with an action of $\underline{\pi}_1^{\A^1}(X)$, which was written out by Asok and Doran \cite[Theorem 3.16]{asokdoran} in their article on $\A^1$-excision. Our next main result extends this classification to general base schemes and simultaneously to higher $\A^1$-coverings.
\begin{theorem*}[Theorem \ref{classification_of_n_coverings}]
Let $0 \leq n \leq \infty$ and let $B$ be a Nisnevich sheaf on $\Sm_S$ for any base scheme $S$. Suppose that there exists a section $s$ of the canonical map $\Lmot(B) \to \underline{\pi}_0^{\A^1}(B)$ and that $\underline{\pi}_0^{\A^1}(B)$ is $\A^1$-invariant. Then applying the functor $\Lmot$ pointwise and taking pullbacks along $s$ afterwards induces an equivalence of $(n+1)$-categories
    \[\Cov[n]_{\Ao}(B) \xrightarrow{\sim} \Mod_{(\Omega_s \Lmot(B))_{\leq n}}\!\left(\left(\Sh_{\A^1}(\Sm_S)_{\leq n}\right)_{/\underline{\pi}_0^{\A^1}(B)}\right),\]
    where $\Omega_s$ denotes the relative loop space in the Nisnevich $\infty$-topos.
\end{theorem*}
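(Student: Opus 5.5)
We prove the classification by factoring it into three equivalences: pass to the motivic localization, then to truncated objects over $\underline{\pi}_0^{\A^1}(B)$, and finally apply fibrewise Galois descent along the section $s$.

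\emph{Step 1: reduction to the motivic base.} By Corollary \ref{middle_eq}, $\A^1$-$n$-coverings depend only on the motivic homotopy type of the base. This gives an equivalence $\Cov[n]_{\Ao}(B)\simeq \Cov[n]_{\Ao}(\Lmot B)$, implemented by applying $\Lmot$ pointwise.

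For a motivic space $X=\Lmot B$, we then show that an $n$-truncated map $E\to X$ is right orthogonal to $\A^1$-equivalences exactly when $E$ is itself motivic. The argument is the standard fact that, over a local base, a map between local objects is local, and conversely. This identifies $\Cov[n]_{\Ao}(X)$ with the full subcategory of $n$-truncated morphisms in $\Sh_{\A^1}(\Sm_S)_{/X}$.

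\emph{Step 2: passage to $\underline{\pi}_0^{\A^1}(B)$.} Set $P=\underline{\pi}_0^{\A^1}(B)$, which is a motivic $0$-truncated object by the $\A^1$-invariance hypothesis. The canonical map $p\colon X\to P$ is $0$-connective, i.e.\ an effective epimorphism, in the Nisnevich topos. We have $p\circ s=\mathrm{id}$, and the relative loop object $\Omega_s X = s^*(X\times_P X)$, with $X\times_P X\to X\times_P X$ read in the topos $\Sh(\Sm_S)_{/P}$, is a group object over $P$. Since $X$ and $P$ are motivic, so is $\Omega_s X$, and hence so is its $n$-truncation.

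\emph{Step 3: relative Galois correspondence.} We apply the classification of $n$-coverings in an $\infty$-topos to the slice topos $\mathcal X=\Sh(\Sm_S)_{/P}$, in which $X$ is a connected object with a global point $s$. The statement is that $n$-truncated objects over a pointed connected object $X$ correspond to $n$-truncated objects of $\mathcal X$ with an action of $(\Omega_s X)_{\le n}$. This is the standard equivalence $\mathcal X_{/X}\simeq \Mod_{\Omega X}(\mathcal X)$ for pointed connected $X$ (Lurie's delooping, HA 7.2.2 / HTT 7.2.2.11), together with the observation that truncation is compatible on both sides. For an $n$-truncated $E\to X$, the fibre $s^*E$ is $n$-truncated, and the $\Omega_s X$-action on it factors through $(\Omega_sX)_{\le n}$ because $\mathrm{Aut}(s^*E)$ is $n$-truncated.

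It remains to restrict to the motivic parts. Under the equivalence of Step 3, $E$ is motivic iff $s^*E$ is motivic. The forward direction holds because motivic objects are closed under limits. For the converse, write $E$ as the colimit of the bar construction $X\times_P(\Omega_sX)^{\times k}\times_P s^*E$ over $P$; use that $X\to P$ is, motivically, the quotient $P\times B(\Omega_s X)$; and show that fibrewise $\A^1$-locality descends along the effective epi $s\colon P\to X$.

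\emph{Main obstacle.} The hardest point is this last descent. Motivic localization does not commute with pullbacks in general, so we cannot simply localize. We would first try to verify locality by testing against the generating $\A^1$-equivalences $U\times\A^1\to U$. For a map $U\to X$, we lift it Nisnevich-locally to $P$ via $p$. Using $\A^1$-invariance of $P$, the lifted map $U\times\A^1\to P$ factors through $U$. Mapping spaces into $E$ over such a lift are then computed by the fibre $s^*E$ twisted by an $\Omega_sX$-torsor, and these are $\A^1$-invariant because both $s^*E$ and $\Omega_sX$ are motivic. Nisnevich descent then glues these local computations.

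Finally, the $(n+1)$-categorical statement follows because both sides consist of $n$-truncated objects, so all mapping spaces are $n$-truncated.
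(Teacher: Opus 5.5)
Your proof is correct in substance and follows the paper's skeleton. You use Corollary \ref{middle_eq} to replace $B$ by $X=\Lmot(B)$. You identify $\A^1$-$n$-coverings of an $\A^1$-invariant base with $n$-truncated maps whose source is $\A^1$-invariant, which is Lemma \ref{check_on_fib}, (i)$\Leftrightarrow$(ii). You then apply the topos-theoretic classification in the slice over $P=\underline{\pi}_0^{\A^1}(B)$; this is exactly Theorem \ref{class_cov_nconn}, so you can cite it rather than re-derive it. Finally you show that $E$ is $\A^1$-invariant iff $s^*E$ is.

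The real difference is in that last step, your ``main obstacle''. The paper handles it abstractly:
\begin{itemize}
\item Over the $\A^1$-invariant bases $X$ and $P$, $\A^1$-invariance of $E$ (resp.\ $s^*E$) is the same as $E\to X$ (resp.\ $s^*E\to P$) lying in the right class $\mathcal{R}_{\A^1}$ of the $\A^1$-modality.
\item The right class of any modality descends along effective epimorphisms: pulling back the factorization along $s$ preserves it, and base change along an effective epimorphism is conservative.
\item That $s$ itself, and not merely $X\to P$, is an effective epimorphism is Remark \ref{detect_trunc}.
\end{itemize}
Your hands-on version also works, but it is simpler than you present it. Once $a\colon U\to X$ lifts Nisnevich-locally along $s$ to $b\colon U\to P$, you have $U\times_X E\simeq U\times_P s^*E$. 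The comparison map is then just $\Map_{/P}(U,s^*E)\to\Map_{/P}(U\times\A^1,s^*E)$. This is an equivalence because $s^*E\to P$ is a map of $\A^1$-invariant sheaves. The local statements glue because $V\mapsto\Map_{/X}(V,E)$ and $V\mapsto\Map_{/X}(V\times\A^1,E)$ are Nisnevich sheaves on $\Sm_U$. You need no torsor twisting, no $\A^1$-invariance of $\Omega_sX$, and no bar construction. (Also, $X$ is the relative delooping of $\Omega_sX$ over $P$, not a product $P\times B(\Omega_sX)$.) The paper's route is shorter and works verbatim for any modality; yours is self-contained but tied to the generators $U\times\A^1\to U$.

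You should delete one assertion: that the $n$-truncation of $\Omega_sX$ is again $\A^1$-invariant. Nisnevich truncation does not preserve $\A^1$-invariance in general. If it did, the hypothesis that $\underline{\pi}_0^{\A^1}(B)=\Lmot(B)_{\le 0}$ is $\A^1$-invariant would be superfluous. For $n=0$ your claim amounts to $\A^1$-invariance of the relative $\A^1$-fundamental group sheaf. Over a perfect field that is Morel's strong $\A^1$-invariance theorem, so it is not formal, and it is not available for arbitrary $S$. Fortunately you never use it. As in the paper, the target category should be read as $(\Omega_sX)_{\le n}$-modules in $(\Sh(\Sm_S)_{\le n})_{/P}$ whose underlying object is $\A^1$-invariant. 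Finally, $\Omega_sX$ is $P\times_{s,X,s}P$, not $s^*(X\times_P X)$.
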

So after choosing the indicated section, $\A^1$-$n$-coverings are classified by $n$-truncated $\A^1$-invariant objects equipped with an action of the relative loop object. This result follows from a corresponding classification of $n$-coverings in an $\infty$-topos together with a study of the interplay between the truncatedness and the orthogonality defining the higher $\A^1$-coverings. Morel also hinted in Remark 7.10 of \cite{morel} at a fully non-pointed generalization for $n = 0$ by means of an $\A^1$-fundamental groupoid, but such a variant remains unclear to us. In topology, the groupoid version is established by reducing to components, but in general $\infty$-topoi or in $\A^1$-homotopy theory, there is not necessarily a meaningful notion of decomposition of an object into its components. 

\hfill

A further question asked by Morel in Remark 7.3 of \cite{morel} concerns the potential geometric nature of $\A^1$-coverings between smooth schemes. In more detail, Morel asked whether a morphism $p \colon X \to Y$ between smooth schemes is an $\A^1$-covering if and only if it has the unique right lifting property against zero sections $0_U \colon U \to \A^1 \times \, U$ for all smooth schemes $U$. With our final main result, we answer this question in the negative:
\begin{theorem*}[Theorem \ref{counterex to Morel}]
Let $\C$ be the base field. There exists a smooth separated scheme $T$ together with a rational point $p_1 \colon \Spec(\C) \to T$ such that:
\begin{enumerate}
\item[(i)] The map $p_1$ is right orthogonal to zero sections $0_U \colon U \to \A^1 \times \, U$ for all smooth schemes $U$.
\item[(ii)] The map $p_1$ is not an $\A^1$-covering.
\end{enumerate}
\end{theorem*}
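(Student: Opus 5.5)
The plan is to translate (i) into a concrete geometric rigidity condition, and to exhibit the failure of (ii) through a ``ghost homotopy'', meaning an $\A^1$-homotopy that exists only after Nisnevich sheafification of the naive singular construction. I expect the specific construction of $T$, and the verification of (ii) for it, to be the main obstacle.

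For (i), note that $\Spec(\C)$ is terminal in $\Sm_\C$. A lifting problem of $p_1$ against $0_U$ is therefore a morphism $f\colon \A^1\times U\to T$ with $f\circ 0_U$ constant at $p_1$. A lift exists, and is then automatically unique, exactly when $f$ itself is constant at $p_1$. So (i) says the following: no morphism $\A^1\times U\to T$ from a smooth $U$ sends $0\times U$ to $p_1$ unless it is constant. Using the automorphism $t\mapsto 1-t$, the same then holds for $1\times U$. Thus $p_1$ admits no nonconstant naive $\A^1$-homotopy, parametrised over any smooth $U$, starting or ending at it.

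For (ii), I would first use that $p_1$ is a monomorphism, so it is $0$-truncated. If it were an $\A^1$-covering, then by Theorem \ref{morel_a1_cov} and Corollary \ref{middle_eq} it would be an $\A^1$-$0$-covering. Its image would then be a subobject of $T$ that is stable under all maps $X\to T$ that become homotopic after $\Lmot$, not merely under naive homotopies. In particular, the composite $\Spec(\C)\to T\to \Lmot(T)$ would be the inclusion of a ``clopen'' piece of $\Lmot(T)$. Consequently any point $q\neq p_1$ of $T$ that is $\A^1$-chain connected to $p_1$ in $\underline{\pi}_0^{\A^1}(T)$ gives a contradiction: pulling the covering back along a path between $q$ and $p_1$ in $\Lmot(T)$, and using unique lifting against the $\A^1$-equivalence from a point into the path, would force $q=p_1$.

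So I need a smooth separated $T$ and two distinct rational points $p_1,q$ with three properties. First, $p_1$ is naively $\A^1$-rigid in the sense of (i). Second, $p_1$ and $q$ become equal in $\underline{\pi}_0^{\A^1}(T)$. Third, the identification of $p_1$ and $q$ passes through the Nisnevich sheafification of $\Sing_*$, and not through naive homotopies. I would look for $T$ among the surfaces used in the examples of Balwe--Hogadi--Sawant, where $\Sing_*$ of a smooth scheme fails to be $\A^1$-invariant. Alternatively, I would glue a rational curve through $q$ to a non-uniruled part through $p_1$ so that the connection to $p_1$ exists only Nisnevich-locally over a parameter scheme $U$. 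The idea is that a map out of $\A^1\times V$, for $V\to U$ a Nisnevich cover, connects $q$ to a section that Nisnevich-locally equals $p_1$, although no such family hits $p_1$ along a full zero section.

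Verifying (i) for such a $T$ would go by showing that any rational curve meeting $p_1$ is constant, for instance via non-uniruledness or a genus argument near $p_1$. Verifying (ii) requires an explicit computation in $\underline{\pi}_0^{\A^1}(T)$, and this is the step I expect to be delicate: it is where a carefully chosen iterated $\Sing_*$/Nisnevich construction is needed.
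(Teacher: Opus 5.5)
Your two reductions are correct and match the paper's. Recasting (i) as ``every $H\colon \A^1\times U\to T$ with $H\circ 0_U=p_1$ is constant'' is Corollary \ref{counterex 2}. Your argument for (ii) is Proposition \ref{counterex 3}: if $p_1$ were an $\A^1$-fibration, the square $p_1\to\Lmot(p_1)$ would be Cartesian, so any rational point $q$ with $[q]=[p_1]$ in $\underline{\pi}_0^{\A^1}(T)(\C)$ would factor through $p_1$. Your ``point into the path'' variant is the remark after that proposition. However, the theorem is an existence statement, and its whole content is the example, which you do not provide. You stop at listing the properties $T$ must have and explicitly defer the construction and both verifications. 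As written, this is a reduction, not a proof.

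The missing input is Construction 4.3 and Lemma 4.4 of Balwe--Hogadi--Sawant. Take the cone $S_1=V(y^2-t^2f)\subset\A^3$ over the affine elliptic curve $E=V(y^2-f)$. It is singular along the $x$-axis, in particular at $q=(1,0,0)$. So one embeds it in a smooth separated $T_1$ in which every rational map $\A^1_L\dashrightarrow(T_1)_L$ meeting $(S_1)_L$ factors through $(S_1)_L$. Then set $T=T_1\setminus\{p_0\}$, where $p_0$ is the vertex, and $p_1=(0,\alpha_0,1)$.

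For (i), pass to the generic points of $U$, and come back using that $T$ is separated and $\A^1\times U$ is reduced. Over a function field $L$ the map lands in $S_1$. The rational map $(x,y/t)\colon\A^1_L\dashrightarrow E_L$ extends to $\P^1_L\to\overline{E}_L$ and is therefore constant. This pushes $H$ into the punctured ruling line $\{x=0,\ y=\alpha_0t,\ t\neq0\}\cong\G_m$, where $t$ is a unit of $L[r]$ and hence equal to $1$.

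This also shows that the criterion you propose for (i), that every rational curve through $p_1$ be constant, is stronger than needed and fails for this $T$. The punctured line is a nonconstant rational curve through $p_1$. It is harmless only because removing $p_0$ makes it $\G_m$ rather than $\A^1$.

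For (ii), no computation of $\underline{\pi}_0^{\A^1}(T)$ is required. The ghost homotopy glues three maps into a map $C\to T$:
\begin{itemize}
\item $x\mapsto(x,0,0)$ on $\overline{V}_1=\G_m$;
\item $(x,y)\mapsto(x,y,1)$ on $\overline{V}_2\subset E$;
\item $(x,y,s)\mapsto(x,sy,s)$ on $\A^1\times\overline{V}_{12}$, which avoids $p_0$ since $x\neq 0$ there.
\end{itemize}
Here $C$ is a sheaf that is $\A^1$-equivalent to $\A^1_x$, and its rational points $1\in\overline{V}_1$ and $(0,\alpha_0)\in\overline{V}_2$ map to $q$ and $p_1$. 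This gives $[p_1]=[q]$.
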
 
Since this failure already occurs for $\A^1$-coverings, that is $\A^1$-$0$-coverings, this also rules out a direct extension of Morel's proposed geometric criterion for higher $\A^1$-coverings. Let us also note that another starting point for a more geometric understanding of $\A^1$-coverings is studied in Section 5 of \cite{asokdoran}, where the authors study the relation of quotients of free actions of suitable algebraic groups to $\A^1$-covering spaces.

The example above builds on Construction 4.3 of Balwe–Hogadi–Sawant \cite{BHS}. Any algebraic homotopy in $T$ starting at the point $p_1$ turns out to be constant in any smooth family, but there exists a so-called $\A^1$-ghost homotopy supplying an equality of the form
$$[p_1] = [q] \in \underline{\pi}_0^{\A^1}(T)(\C)$$
for a second rational point $q$ of $T$. Using this, we construct a lifting problem against an $\A^1$-equivalence without a lift. Our result hence shows that the two notions defined by right lifting properties behave differently with respect to algebraic homotopies and $\A^1$-ghost homotopies in general. It remains an open question whether it is possible to test $\A^1$-coverings against a larger class of $\A^1$-equivalences of geometric nature than the class of zero sections. We can show, however, that Morel's suggested geometric criterion works on the level of presheaves; see Corollary \ref{morels_question}.

\subsection{Notation and Conventions} Throughout the article we let $S$ denote a general base scheme and we emphasize if we require further properties of $S$. For the convenience of the reader, we provide a table listing some of our notations and conventions:
\begin{center}
    \begin{tabular}{cl}
        $n$-category & $(n, 1)$-category, i.e., an $\infty$-category with $(n-1)$-truncated  mapping spaces \\
        $\Sm_S$ & $1$-category of smooth schemes over $S$ equipped with the Nisnevich topology \\
        $\sSet$ & model $1$-category of simplicial sets with the Quillen model structure\\
        $\Spc$ & $\infty$-category of spaces/homotopy types/$\infty$-groupoids \\
        $\PShv(\mathcal{G})$ & $\infty$-category of presheaves on a site $\mathcal{G}$ \\
        $\Shv(\mathcal{G})$ & $\infty$-category of sheaves on a site $\mathcal{G}$ \\
        $\Shv_{\A^1}(\Sm_S)$ & $\infty$-category of $\A^1$-invariant (Nisnevich) sheaves \\
        $n$-$\Cov(B)$ & $(n+1)$-category of $n$-coverings of $B$ \\
        $n$-$\Cov_{\A^1}(B)$ & $(n+1)$-category of $\A^1$-$n$-coverings of $B$ \\
        $L_{\mot}$ & motivic localization functor \\ 
        right/left orthogonal & right/left lifting property with contractible space of fillers \\
        $n$-truncated & right orthogonal to the maps $S^{n+1} \times X \to X$ for all objects $X$ \\
        $n$-connected & left orthogonal to all $n$-truncated maps \\
        saturated & contains equivalences and is closed under composition and colimits \\
        strongly saturated & saturated and satisfies 2-out-of-3 \\
        1st-of-3 for $C$ & for a class of maps $C$, whenever $g, g \circ \hspace{-1pt}f \in C$, also $f \in C$
    \end{tabular}
\end{center}
Moreover, we will often say presheaf or Nisnevich sheaf without explicitly specifying the site or the target category, which for us will mean (pre-)sheaves of spaces on $\Sm_S$.
\subsection{Acknowledgments} We wish to thank Peter Arndt, Muhammad Awais, Nidhi Gupta, Claudius Heyer, Georg Lehner, Cary Malkiewich, Oliver Röndigs, Markus Spitzweck, Matthias Wendt and Markus Zetto for useful comments and discussions. Moreover, we thank Ibrahim from Food Station Osnabrück for providing plenty of nourishing meals while the authors worked on this article. Finally, the first-named author was partially supported by the National Science Foundation under Grant No. DMS-2424139 while he was in residence at the Simons Laufer Mathematical Sciences Institute in Berkeley, California.

\section{Coverings in \texorpdfstring{$\infty$}{infty}-Topoi and in Model 1-Categories}
In this section we give a brief introduction to higher coverings in $\infty$-topoi following \cite{high_cov}. In particular, we state their Galois-theoretic classification, and in the case of spaces/$\infty$-groupoids, compare them to their classical counterpart for topological spaces. Finally, we introduce and study coverings in model $1$-categories in preparation for the next section on coverings in motivic homotopy theory.

\hfill

We begin with the notion of higher coverings as introduced and studied by Constantin \cite{high_cov}. Before we state the definition, let us note that our preferred convention differs by a shift of $1$ from the one in loc.\ cit.
\begin{de}\label{topos_cov}
    Let $B$ be an object of an $\infty$-topos $\mathcal{T}$ and let $0 \leq n \leq \infty$. An $n$-covering of $B$ is an $n$-truncated map $E \to B$ in $\T$. We denote the full subcategory of $\T_{/ B}$ spanned by $n$-coverings of $B$ by $\Cov[n](B)$.
\end{de}
It is not difficult to observe that all of the mapping spaces between objects in $\Cov[n](B)$ are $n$-truncated, so that $\Cov[n](B)$ is an $(n+1)$-category. 
\begin{rem}\label{detect_trunc}
    Let $B \to P$ be a $0$-connected map in an $\infty$-topos $\T$ and suppose it admits a section $s$. Then $s$ is $(-1)$-connected by \cite[Proposition 8.8 (3)]{rezk} and hence an effective epimorphism. Therefore, \cite[Proposition 7.10]{rezk} yields that a map $E \to B$ is $n$-truncated if and only if its base change along $s$ is.
\end{rem}
Now we state a variant of Constantin's Galois-theoretic classification of higher coverings from \cite{high_cov}. 
\begin{thm}\label{class_cov_nconn}
    Let $B$ be an object of an $\infty$-topos $\T$ and let $0 \leq n \leq \infty$. Suppose there exists a section $s$ of the canonical map $B \to B_{\leq 0}$. Then taking pullbacks along $s$ induces an equivalence of $(n+1)$-categories
    \[\Cov[n](B) \xrightarrow{\sim} \Mod_{(\Omega_s B)_{\leq n}}((\T_{\leq n})_{/B_{\leq 0}}).\]
    The inverse functor is given by taking a module $M$, extending the module structure to one over $\Omega_s B$ by adjunction, and sending the latter to the associated quotient stack $M // \Omega_s B$, i.e., the geometric realization of the simplicial diagram witnessing the module structure on $M$. This object comes together with a natural map to $* // \Omega_s B \simeq  B$.
\end{thm}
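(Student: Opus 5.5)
We will reduce to the case where $B$ is $0$-connected over its $0$-truncation, then use the standard correspondence between pointed connected objects and group objects.

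Write $P = B_{\leq 0}$ and work in the slice topos $\T_{/P}$, which is again an $\infty$-topos. Inside it, $B \to P$ is a $0$-connected object: its $0$-truncation in $\T_{/P}$ is the terminal object $\id_P$. The section $s$ then makes $B$ a \emph{pointed} connected object of $\T_{/P}$. Since $\Cov[n](B)$ only depends on the slice $\T_{/B} \simeq (\T_{/P})_{/B}$, we may replace $\T$ by $\T_{/P}$. We may therefore assume $B$ is $0$-connected, with base point $s\colon * \to B$ and $G := \Omega_s B$. The target category $\Mod_{G_{\leq n}}((\T_{\leq n})_{/P})$ becomes $\Mod_{G_{\leq n}}(\T_{\leq n})$ computed in the slice.

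\textbf{Step 1: all of $\T_{/B}$.} First we show that pulling back along $s$ gives an equivalence $\T_{/B} \simeq \Mod_G(\T)$, where $\Mod_G(\T)$ denotes objects with a $G$-action. Descent in $\infty$-topoi (Lurie, HTT 6.1.3) identifies $\T_{/B}$ with cartesian families over the Čech nerve of the effective epimorphism $* \to B$. The effectiveness comes from Remark \ref{detect_trunc}, or from $B$ being connected. This Čech nerve is the bar construction of $G$, with $* \times_B * \simeq G$ and higher terms $G^{k}$. Cartesian families over it are exactly $G$-modules, and the inverse is the colimit of the action groupoid, $M \mapsto M//G$. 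This is the classical equivalence between objects over $BG$ and $G$-objects; it is exactly the quotient-stack description of the inverse stated in the theorem, including the map $M//G \to *//G \simeq B$.

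\textbf{Step 2: restricting to $n$-coverings.} By Remark \ref{detect_trunc}, a map $E \to B$ is $n$-truncated iff its pullback $s^*E \to *$ is, i.e. iff $s^*E$ is an $n$-truncated object. Hence $\Cov[n](B)$ corresponds to the full subcategory of $G$-modules whose underlying object is $n$-truncated. It remains to identify these with $\Mod_{G_{\leq n}}(\T_{\leq n})$. Truncation $\tau_{\leq n}\colon \T \to \T_{\leq n}$ preserves finite products (HTT 6.5.1.2), so it is symmetric monoidal for the cartesian structures. Consequently $G_{\leq n}$ is a group object in $\T_{\leq n}$, and restriction along $G \to G_{\leq n}$ gives a functor $\Mod_{G_{\leq n}}(\T_{\leq n}) \to \Mod_G(\T)$ landing in modules with $n$-truncated underlying object. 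Its inverse is obtained by adjunction: an action $G \times M \to M$ with $M$ $n$-truncated factors uniquely through $\tau_{\leq n}(G \times M) \simeq G_{\leq n} \times M$. The same holds for all the higher simplicial coherence data, since $\tau_{\leq n}(G^k \times M) \simeq G_{\leq n}^k \times M$. This is the "extending the module structure by adjunction" in the statement.

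\textbf{Main obstacle.} The hardest part is making Step 2 precise at the $\infty$-categorical level. Concretely, one must show that for a product-preserving localization $L\colon \T \to \T_{\leq n}$, modules over $LG$ in the local category are equivalent to $G$-modules whose underlying object is local. I would formalize modules as Segal-type simplicial objects over the bar construction, in the sense of Lurie's HA 4.2. With that model, restriction along the unit $\Delta^{\op}$-diagram of $G \to LG$ is fully faithful, and its essential image is characterized levelwise by locality, because $L$ commutes with the finite products appearing in the simplicial levels. Finally, one checks that both functors are compatible with pulling back along $s$, so the composite agrees with the functor in the theorem.
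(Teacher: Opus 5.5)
Your argument is correct and begins exactly as the paper's does: you pass to the slice topos $\T_{/B_{\leq 0}}$, in which $B \to B_{\leq 0}$ is a pointed $0$-connected object. The difference is what happens next. The paper simply applies Constantin's classification \cite[Theorem 5.6]{high_cov} in that slice, whereas you reprove it:
\begin{enumerate}
\item[(1)] Descent along the effective epimorphism $s$ gives $\T_{/B} \simeq \Mod_{\Omega_s B}(\T)$. This is the case $n = \infty$, and it also yields the quotient-stack description of the inverse directly.
\item[(2)] Remark \ref{detect_trunc} cuts this down to modules whose underlying object is $n$-truncated.
\item[(3)] Since $\tau_{\leq n}$ preserves finite products, you can trade these modules for $(\Omega_s B)_{\leq n}$-modules in $\T_{\leq n}$.
\end{enumerate}
This makes the proof self-contained and explains the phrase ``extending the module structure by adjunction'' in the statement. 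The cost is that you must formalize the localization-of-modules step you flag, which is a standard consequence of $\tau_{\leq n}$ being compatible with the cartesian monoidal structure. The paper's route is two lines but outsources all of this to \cite{high_cov}.

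There is one step you pass over silently, which the paper makes explicit. Your sentence that the target ``becomes $\Mod_{G_{\leq n}}(\T_{\leq n})$ computed in the slice'' relies on the identification $(\T_{/P})_{\leq n} \simeq (\T_{\leq n})_{/P}$. It also relies on truncating $\Omega_s B$ in $\T_{/P}$ agreeing with truncating it in $\T$. Both hold because $P = B_{\leq 0}$ is $0$-truncated and $n \geq 0$, so a map $X \to P$ is $n$-truncated if and only if $X$ is. This is the ``truncation estimate on the forgetful functor'' in the paper's proof.
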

\begin{proof}
    After unpacking all of the definitions, we get an equivalence
    \[\Cov[n](B) \xrightarrow{\sim} \Mod_{(\Omega_s B)_{\leq n}}((\T_{/B_{\leq 0}})_{\leq n})\]
    from \cite[Theorem 5.6]{high_cov} applied to the slice topos $\T_{/B_{\leq 0}}$, in which $B \to B_{\leq 0}$ is a pointed 0-connected object. From truncation estimates on the forgetful functor $\T_{/B_{\leq 0}} \to \T$ we obtain an equivalence
    \[\left(\T_{/B_{\leq 0}}\right)_{\leq n} \cong \left(\T_{\leq n}\right)_{/B_{\leq 0}}\]
    which yields the claim.
\end{proof}
We will spend the rest of the section connecting the $\infty$-categorical notions from above to more classical notions of coverings. Here we note that most of the following statements are folklore. 

\hfill

Depending on the source, the classical notion of coverings for topological spaces, which we call topological coverings below, excludes certain kinds of examples. To prevent confusion, let us emphasize that we allow the source of such a covering to be empty or disconnected. Moreover, we denote by $\Top$ the $1$-category of locally path-connected, semi-locally simply connected topological spaces, and by $\tCov(B)$ the $1$-category of topological coverings of an object $B \in \Top$.
\begin{prop}\label{top_cov}
    Let $B \in \Top$. The functor $L_{\mathrm{whe}}\colon \Top \to \Spc$ which sends a topological space to the associated space/$\infty$-groupoid induces an equivalence of 1-categories
    $$L_{\mathrm{whe}}\colon\tCov(B) \xrightarrow{\sim} \Cov[0](L_{\mathrm{whe}}(B)).$$
\end{prop}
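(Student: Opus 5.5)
We reduce the comparison to the classical Galois classification on both sides and check that $L_{\mathrm{whe}}$ intertwines the two classifications. The first step is to see that the functor lands in the right place. A topological covering $p\colon E \to B$ is a Serre fibration with discrete fibers. Hence $L_{\mathrm{whe}}(p)$ is a map of spaces whose homotopy fibers are the discrete fibers $p^{-1}(b)$, which makes it $0$-truncated. Since $E$ is again locally path-connected and semi-locally simply connected, $E \in \Top$. We also reduce to connected $B$: both sides decompose as products over path components. For $\tCov(B)$ this holds because $B$ is locally path-connected, so its path components are open and coverings restrict componentwise. For $\Cov[0](L_{\mathrm{whe}}(B))$ we use $\Spc_{/\coprod B_i}\simeq \prod \Spc_{/B_i}$. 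So we assume $B$ is path-connected and pick a basepoint $b$.

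Next we place the classifications side by side. Theorem \ref{class_cov_nconn}, applied in $\T=\Spc$ with $n=0$ and $B_{\leq 0}=*$, gives an equivalence
\[
\Cov[0](L_{\mathrm{whe}}(B)) \simeq \Mod_{(\Omega_b B)_{\leq 0}}(\Set)=\pi_1(B,b)\text{-}\Set .
\]
This is given by taking the fiber over $b$ together with its monodromy action. Classical covering space theory, which uses the hypotheses defining $\Top$, gives the equivalence $\tCov(B)\simeq \pi_1(B,b)\text{-}\Set$ via $E\mapsto p^{-1}(b)$ with the path-lifting monodromy action. It remains to check that the square formed by these two equivalences and $L_{\mathrm{whe}}$ commutes up to natural isomorphism. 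Then two-out-of-three for equivalences finishes the proof.

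The homotopy fiber of $L_{\mathrm{whe}}(p)$ over $b$ is naturally identified with the strict fiber $p^{-1}(b)$, because $p$ is a fibration. The main obstacle is matching the actions: the $\Omega_b B$-action on the fiber in the $\infty$-categorical setting must agree on $\pi_0$ with the classical monodromy. For this we use that the $\Omega_bB$-module structure in Theorem \ref{class_cov_nconn} is the fiber transport of the fibration. For a Serre fibration, fiber transport along a loop $\gamma$ is computed by lifting $\gamma$ starting at each point of the fiber and taking the endpoint. For a covering this is exactly the unique path lifting that defines the classical monodromy. We also need naturality in maps of coverings. This is immediate, since both identifications are the fiber functor restricted to $p^{-1}(b)$.

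Alternatively, one can avoid the module comparison. Full faithfulness on $\pi_0$ of mapping spaces follows from the unique lifting criterion, because $\Map_{/B}(E,E')$ is discrete and its points are homotopy classes of maps over $B$, which are strict maps by unique lifting. Essential surjectivity follows by realizing any $\pi_1$-set $F$ as $\tilde B\times_{\pi_1}F$ using the universal cover.
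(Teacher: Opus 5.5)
Your proposal is correct and is essentially the paper's proof. In both, you reduce to pointed connected $B$ and identify the strict fiber of the covering with the homotopy fiber. You then match the classical monodromy with the $(\Omega_b B)_{\leq 0}$-action from Theorem \ref{class_cov_nconn}, and conclude by two-out-of-three from the commuting triangle with the classical equivalence $\tCov(B) \simeq \pi_1(B,b)\text{-}\Set$.

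Your closing ``alternative'' does not really avoid the module comparison. To show that $L_{\mathrm{whe}}(\tilde B\times_{\pi_1}F)$ recovers a given $0$-truncated map, you still need Theorem \ref{class_cov_nconn} and the identification of the actions. The main argument does not depend on it.
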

\begin{proof}
    The statement is clear if $B$ is empty. Thus we may assume $B$ to be non-empty. By decomposing $B$ into its connected components and by choosing a base point for each component, we may furthermore assume that $B$ is pointed and connected. Now we consider the topological fiber functor
    \[\mathrm{tfib}\colon \tCov(B) \to \pi_1(B)\text{-}\Mod(\Set),\]
    which sends a topological covering of $B$ to its fiber over the base point of $B$ together with the monodromy action. Since topological coverings are Hurewicz fibrations, this fiber is also a homotopy fiber. Therefore, the functor 
    $$L_{\mathrm{whe}}\colon\tCov(B) \to \Cov[0](L_{\mathrm{whe}}(B))$$
    is well-defined. Indeed, a topological covering has discrete (homotopy) fibers, which means that its image under $L_{\mathrm{whe}}$ is 0-truncated. Moreover, the monodromy action on the fiber of a topological covering of $B$ coincides with the action of $(\Omega B)_{\leq 0}$ on its homotopy fiber. In other words, the triangle
    \[\begin{tikzcd}
    	& {\Cov[0](L_{\mathrm{whe}}(B))} & \\
    	{\tCov(B)} && {\pi_1(B)\text{-}\Mod\left(\Set\right)}
    	\arrow["\fib", from=1-2, to=2-3]
    	\arrow["{L_{\mathrm{whe}}}", from=2-1, to=1-2]
    	\arrow["{\mathrm{tfib}}"', from=2-1, to=2-3]
    \end{tikzcd}\]
    commutes. The horizontal functor is an equivalence by classical covering theory (see, e.g., \cite[Theorem 3.3.2]{tomdieck}), and the top right functor is an equivalence by Theorem \ref{class_cov_nconn}. Hence the top left functor must be an equivalence as well, which proves the 
    statement.
\end{proof}
This proposition, which is also hinted at in Example 7.2 of \cite{rezk}, allows us to view topological coverings within the $\infty$-category of spaces and hence justifies the $\infty$-topos-theoretic notion of coverings.

\hfill

Next we introduce coverings in the framework of model $1$-categories. The following definition was already proposed for specific model structures, for instance in \cite{gabriel_zisman} and \cite{morel}.
\begin{de}\label{gen_covering}
    Let $\mathcal{C}$ be a model 1-category with weak equivalences $W$. We say that a map $E \to B$ in $\mathcal{C}$ is a $W$-covering if it is right orthogonal to the class $W$. Given an object $B \in \mathcal{C}$, we denote by $\Cov_W(B)$ the full subcategory of $\mathcal{C}_{/B}$ spanned by $W$-coverings.
\end{de}

Note that, if the base is the terminal object, a $W$-covering $E \to *$ is nothing more than a $W$-local object. The following proposition allows us in greater generality to rephrase orthogonality questions as locality questions, which are better studied and understood.
\begin{prop}
    Let $\mathcal{C}$ be an $\infty$-category and let $f \colon A \to B$ and $g \colon X \to Y$ be maps in $\mathcal{C}$. The following are equivalent:
    \begin{enumerate}
        \item[(i)] The map $g$ is right orthogonal to $f$.
        \item[(ii)] For any map $b \colon B \to Y$ viewed as an object in $\mathcal{C}_{/Y}$, the object $g$ is $f$-local in $\mathcal{C}_{/Y}$, where $f$ is viewed as a morphism $b \circ \!f \to b$ in $\mathcal{C}_{/Y}$.
    \end{enumerate}
\end{prop}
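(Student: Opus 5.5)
We reduce both conditions to the same statement about fiber spaces of mapping spaces. Recall that $g$ is right orthogonal to $f$ if the square
\[\begin{tikzcd}
	{\Map_{\mathcal{C}}(B,X)} & {\Map_{\mathcal{C}}(A,X)} \\
	{\Map_{\mathcal{C}}(B,Y)} & {\Map_{\mathcal{C}}(A,Y)}
	\arrow["{f^*}", from=1-1, to=1-2]
	\arrow["{g_*}"', from=1-1, to=2-1]
	\arrow["{g_*}", from=1-2, to=2-2]
	\arrow["{f^*}"', from=2-1, to=2-2]
\end{tikzcd}\]
is a pullback square of spaces. Equivalently, the induced map on fibers of the two vertical maps is an equivalence over every point of $\Map_{\mathcal{C}}(B,Y)$. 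A point there is exactly a map $b \colon B \to Y$, and its image in $\Map_{\mathcal{C}}(A,Y)$ is $b \circ f$.

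The first step is to identify these fibers with mapping spaces in the slice. For objects $c \colon C \to Y$ and $g \colon X \to Y$ of $\mathcal{C}_{/Y}$ there is a natural fiber sequence
\[\Map_{\mathcal{C}_{/Y}}(c, g) \to \Map_{\mathcal{C}}(C,X) \xrightarrow{g_*} \Map_{\mathcal{C}}(C,Y),\]
with fiber taken over the point $c$ (see, e.g., Lurie, HTT Lemma 5.5.5.12). Applying this with $C=B$, $c=b$ and with $C=A$, $c=b\circ f$, the map induced on fibers over $b$ in the square above is identified with
\[f^* \colon \Map_{\mathcal{C}_{/Y}}(b, g) \to \Map_{\mathcal{C}_{/Y}}(b\circ f, g).\]
Here $f$ is regarded as the morphism $b\circ f \to b$ in $\mathcal{C}_{/Y}$, witnessed by the identity homotopy.

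The one point needing care is naturality. The restriction map between the two fiber sequences must be compatible with precomposition by $f$ in the slice. This holds because the fiber sequence is functorial in the source object $c \in \mathcal{C}_{/Y}$, and the forgetful functor $\mathcal{C}_{/Y} \to \mathcal{C}$ sends the morphism $f$ to $f$. Concretely, one can use the description $\Map_{\mathcal{C}_{/Y}}(c,g) \simeq \Map_{\mathcal{C}}(C,X) \times_{\Map_{\mathcal{C}}(C,Y)} \{c\}$ and observe that $f^*$ acts compatibly on each factor of this pullback. This identification is the main step. Everything else is formal, but it should be stated carefully so that the slice morphism is the correct one.

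Finally, a map of spaces over the connected components of $\Map_{\mathcal{C}}(B,Y)$ induces an equivalence on total spaces if and only if it induces an equivalence on all fibers. Since the square is a pullback exactly when the comparison map $\Map_{\mathcal{C}}(B,X) \to \Map_{\mathcal{C}}(A,X) \times_{\Map_{\mathcal{C}}(A,Y)} \Map_{\mathcal{C}}(B,Y)$ is an equivalence, and this is a map over $\Map_{\mathcal{C}}(B,Y)$, we get the following. Condition (i) holds if and only if, for every $b$, the map $f^* \colon \Map_{\mathcal{C}_{/Y}}(b,g) \to \Map_{\mathcal{C}_{/Y}}(b\circ f,g)$ is an equivalence. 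That is precisely the statement that $g$ is $f$-local in $\mathcal{C}_{/Y}$ for every $b$, which is condition (ii).
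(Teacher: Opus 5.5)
Your proposal is correct and follows essentially the same route as the paper: it identifies the fibers of $g_*$ over $b$ and over $b \circ f$ with the slice mapping spaces, obtains a morphism of fiber sequences, and observes that the square is Cartesian if and only if the induced map on fibers is an equivalence for every point $b \in \Map_{\mathcal{C}}(B,Y)$. The extra care you take with the naturality of the fiber sequence in the source object, and with reducing to fiberwise equivalences over the base, spells out details the paper leaves implicit.
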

\begin{proof}
    Choosing a map $b \colon B \to Y$ corresponds to choosing a point in the mapping space $\Map_{\mathcal{C}}(B, Y)$ and thus in $\Map_{\mathcal{C}}(A, Y)$ by composition with $f$. For such a choice we have a morphism of fiber sequences
    \[\begin{tikzcd}
    	{\Map_{\mathcal{C}_{/Y}}(b,g)} & {\Map_{\mathcal{C}}(B,X)} & {\Map_{\mathcal{C}}(B, Y)} \\
    	{\Map_{\mathcal{C}_{/Y}}(b \circ \! f, g)} & {\Map_{\mathcal{C}}(A,X)} & {\Map_{\mathcal{C}}(A,Y)}
    	\arrow[from=1-1, to=1-2]
    	\arrow["{f^*}"', from=1-1, to=2-1]
    	\arrow["{g_*}", from=1-2, to=1-3]
    	\arrow["{f^*}"', from=1-2, to=2-2]
    	\arrow["{f^*}", from=1-3, to=2-3]
    	\arrow[from=2-1, to=2-2]
    	\arrow["{g_*}"', from=2-2, to=2-3]
    \end{tikzcd}\]
    Now (i) is equivalent to the square on the right being a Cartesian square of spaces, which is equivalent to the left vertical map between the fibers being an equivalence for every point in $\Map_{\mathcal{C}}(B, Y)$. As the latter is a reformulation of (ii), we are done.
\end{proof}
\begin{cor}\label{ortho_to_local}
    Let $S$ be a class of maps in an arbitrary $\infty$-category $\mathcal{C}$ and fix a map $p \colon E \to B$ in $\mathcal{C}$. Let $S_B \subset \mathcal{C}_{/ B}$ be the class of maps whose image under the forgetful functor $\mathcal{C}_{/ B} \to \mathcal{C}$ lands in $S$. Then $p$ is right orthogonal to $S$ in $\mathcal{C}$ if and only if $p$ is $S_B$-local in $\mathcal{C}_{/B}$. 
\end{cor}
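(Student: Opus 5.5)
The plan is to deduce the corollary from the preceding proposition, applied once for each map in $S$. By definition, $p$ is right orthogonal to $S$ if and only if, for every $f \colon A \to A'$ in $S$, the map $p$ is right orthogonal to $f$. By the proposition with $g = p$ and $Y = B$, this holds if and only if, for every $f \in S$ and every map $b \colon A' \to B$, the object $p$ of $\mathcal{C}_{/B}$ is local with respect to the morphism $f \colon b \circ f \to b$ of $\mathcal{C}_{/B}$.

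It then remains to identify the collection of all such morphisms $f \colon b \circ f \to b$, with $f \in S$ and $b$ arbitrary, with the class $S_B$. By definition, a morphism of $\mathcal{C}_{/B}$ lies in $S_B$ exactly when its image under the forgetful functor $\mathcal{C}_{/B} \to \mathcal{C}$ lies in $S$. Such a morphism is a map $f \colon A \to A'$ together with structure maps $a \colon A \to B$ and $b \colon A' \to B$ and a homotopy $a \simeq b \circ f$.

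Since the space of such data with $f$ and $b$ fixed is contractible, every morphism of $S_B$ is equivalent in $\mathcal{C}_{/B}$ to one of the form $b \circ f \to b$. Locality against a morphism depends only on its equivalence class in the arrow category, so the two conditions agree and the equivalence follows.

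There is no real obstacle here; the only point requiring care is exactly this equivalence between morphisms in $S_B$ and the canonical representatives $b \circ f \to b$ used in the proposition.
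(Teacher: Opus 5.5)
Your proposal is correct and matches the paper's approach: the paper states this corollary without separate proof, as an immediate consequence of the preceding proposition applied to each $f \in S$ and each $b$ (with $Y = B$), exactly as you do. Your remark that every morphism of $S_B$ over $f$ with target $b$ is equivalent to the canonical one $b \circ f \to b$, because the space of choices of the source structure map together with a homotopy to $b \circ f$ is contractible, is precisely the identification the paper leaves implicit.
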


We can generalize Lemma 7.2 of \cite{morel} to this more abstract definition of coverings in a model 1-category.
\begin{prop}
    Let $\mathcal{C}$ be a model 1-category with weak equivalences $W$, in which every object is cofibrant. Then a map $p \colon E \to B$ is a $W$-covering if and only if it is right orthogonal to the class of acyclic cofibrations.
\end{prop}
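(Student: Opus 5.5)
The direction ``$W$-covering $\Rightarrow$ right orthogonal to acyclic cofibrations'' is immediate, since acyclic cofibrations lie in $W$. Here right orthogonality is meant in the $1$-categorical sense of a unique lift. For the converse, let $w\colon A\to B'$ be an arbitrary weak equivalence. I would factor it as $w=q\circ i$ with $i\colon A\to C$ an acyclic cofibration and $q\colon C\to B'$ a fibration. By 2-out-of-3, $q$ is an acyclic fibration. Unique right lifting properties are closed under composition: first solve the square against $i$, then the resulting square against $q$, and uniqueness at each stage gives uniqueness of the composite lift. It therefore suffices to show that $p$ is right orthogonal to every acyclic fibration $q\colon C\to B'$. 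All objects are cofibrant by assumption, and this is what we will exploit.

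Fix a square with top map $u\colon C\to E$ and bottom map $v\colon B'\to B$, so that $p u = v q$. Since $B'$ is cofibrant and $q$ is an acyclic fibration, lifting in the square $\emptyset\to C$, $\operatorname{id}\colon B'\to B'$ produces a section $s$ of $q$. Uniqueness of lifts is then immediate: if $l q = l' q$, then $l = l q s = l' q s = l'$ because $q$ is split epi. For existence, the candidate lift is $l := u s$. It satisfies $p l = p u s = v q s = v$, so the only remaining point is to check $l q = u$, i.e.\ $u s q = u$.

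For this I would use a cylinder object for the cofibrant object $C$. Factor $C\sqcup C\to C$ as a cofibration $(i_0,i_1)\colon C\sqcup C\to \mathrm{Cyl}(C)$ followed by an acyclic fibration $\sigma\colon \mathrm{Cyl}(C)\to C$. Since $C$ is cofibrant, both $i_0$ and $i_1$ are acyclic cofibrations. Next, lift in the square with left map $C\sqcup C\to\mathrm{Cyl}(C)$, top map $(\operatorname{id}_C, sq)\colon C\sqcup C\to C$, right map $q$ and bottom map $q\sigma$. This square commutes because $q s q = q$, and a lift exists because $q$ is an acyclic fibration. The lift is a homotopy $H\colon\mathrm{Cyl}(C)\to C$ over $B'$ with $H i_0=\operatorname{id}$ and $H i_1 = sq$.

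Now consider the two maps $uH$ and $u\sigma$ from $\mathrm{Cyl}(C)$ to $E$. Both restrict along $i_0$ to $u$, and both lie over $v q\sigma$: indeed $p u H = v q H = v q\sigma$ and $p u\sigma = v q \sigma$. So both are lifts in the same square against the acyclic cofibration $i_0$. By the hypothesis on $p$, we get $uH = u\sigma$. Restricting along $i_1$ yields $u s q = u$, as required.

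I expect this last step to be the main point of the proof. It requires recognising that right orthogonality against the acyclic cofibration $i_0$ forces the homotopy-invariant conclusion $usq=u$ to hold strictly, and the construction of $H$ \emph{over} $B'$ is exactly what makes that lifting problem well posed. The cofibrancy of all objects is used in three places: for the section $s$, for $i_0,i_1$ being acyclic cofibrations, and for the existence of $H$.
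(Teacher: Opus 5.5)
Your proof is correct, but it takes a different route from the paper's.

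\textbf{The paper's route.} It first uses Corollary \ref{ortho_to_local} to turn right orthogonality into a locality statement: $p$ is a $W$-covering if and only if $\Hom_{\mathcal{C}_{/B}}(-,p)$ sends every map of $\mathcal{C}_{/B}$ whose underlying map lies in $W$ to a bijection. It then applies Ken Brown's lemma to the functor $(\mathcal{C}_{/B})^{\op}\to\Set$, with isomorphisms as the weak equivalences. This uses the slice model structure, in which every object is again cofibrant.

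\textbf{Your route.} You never leave $\mathcal{C}$ and work directly with unique lifts. You factor $w=q\circ i$, compose the two unique lifting properties, and handle the acyclic fibration $q$ by hand. The key idea is that uniqueness of lifts against the acyclic cofibration $i_0$ turns the homotopy $H\colon \id_C\simeq sq$ \emph{over} $B'$ into the strict identity $usq=u$. All steps check out.

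\textbf{What each buys.} The paper's argument is shorter and fits the locality framework it reuses in the proof of Theorem \ref{morel_a1_cov}. Yours is self-contained, avoids the slice model structure and the contravariant form of Ken Brown's lemma, and isolates the useful intermediate fact that $p$ is right orthogonal to acyclic fibrations with cofibrant target.

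\textbf{A possible simplification.} You can drop the cylinder step by using Ken Brown's factorization instead of an arbitrary one. Factor $(w,\id)\colon A\sqcup B'\to B'$ as a cofibration followed by an acyclic fibration $q\colon C\to B'$. Then $A\to C$ is an acyclic cofibration, and the induced section $s\colon B'\to C$ of $q$ is itself an acyclic cofibration. Now $u$ and $usq$ are two lifts of the same square against $s$: both restrict to $us$ and both lie over $vq$. So $usq=u$ follows at once, which is exactly what the paper's citation does implicitly.

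\textbf{A minor correction.} The existence of $H$ does not use cofibrancy, since $(i_0,i_1)$ is a cofibration by construction. Cofibrancy is only needed to get the section $s$ and to make $i_0$ an acyclic cofibration.
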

\begin{proof}
    The ``only if'' part is clear. The other direction follows from Corollary \ref{ortho_to_local} together with Ken Brown's lemma (see, e.g., \cite[Corollary 1.10]{zbMATH06267569}) applied to the functor $\Hom_{\mathcal{C}_{/B}}(-, p)\colon \left(\mathcal{C}_{/B}\right)^\op \to \Set$, where the source is given the slice model structure inherited from $\mathcal{C}$ and the target is viewed as a category with weak equivalences given by isomorphisms.
\end{proof}

Moreover, if the model 1-category that we are working with is simplicial, there is a pleasing interaction between model-categorical coverings and simplicial ones.
\begin{prop}\label{W_and_simp_cov}
    Let $\mathcal{C}$ be a simplicial model 1-category with weak equivalences $W$, in which every object is cofibrant. Let $p \colon E \to B$ be a $W$-covering. Then the simplicial set $\underline{\Hom}_{\mathcal{C}_{/B}}(f, p)$ is discrete for any map $f \colon X \to B$.
\end{prop}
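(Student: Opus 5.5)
We will show that every face map $d_i \colon \underline{\Hom}_{\mathcal{C}_{/B}}(f,p)_1 \to \underline{\Hom}_{\mathcal{C}_{/B}}(f,p)_0$ is a bijection. Since each degeneracy $s_0$ is a common section of $d_0$ and $d_1$, it then follows that $d_0 = d_1$. Iterating this in all degrees, or passing to the $n$-simplices in the same way, shows that the simplicial set is constant, hence discrete.

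First we unwind the simplicial enrichment of the slice. Because $\mathcal{C}$ is simplicial, $\mathcal{C}_{/B}$ inherits a simplicial structure with tensors $f \otimes K$ given by $X \otimes K \to X \otimes \Delta^0 \cong X \xrightarrow{f} B$. With this structure,
\[\underline{\Hom}_{\mathcal{C}_{/B}}(f,p)_n \cong \Hom_{\mathcal{C}_{/B}}(f \otimes \Delta^n, p),\]
where $f\otimes \Delta^n$ is the composite $X\otimes\Delta^n \to X \to B$. A vertex inclusion $\delta\colon \Delta^0 \to \Delta^n$ induces $X \cong X\otimes\Delta^0 \to X\otimes \Delta^n$, which is a map over $B$ from $f$ to $f\otimes\Delta^n$. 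The corresponding face or vertex map on $n$-simplices is precomposition with this map.

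Next we check that this map lies in $W$. The object $X$ is cofibrant by assumption, and $\Delta^0 \to \Delta^n$ is an acyclic cofibration of simplicial sets. Axiom SM7, in its form for tensors with a cofibrant object, then shows that $X\otimes\Delta^0 \to X\otimes\Delta^n$ is an acyclic cofibration in $\mathcal{C}$, in particular a weak equivalence.

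Finally we apply the covering property. Since $p$ is a $W$-covering, Corollary \ref{ortho_to_local} shows that $p$ is $W_B$-local in $\mathcal{C}_{/B}$ in the $1$-categorical sense. Precomposition with the map above is therefore a bijection
\[\Hom_{\mathcal{C}_{/B}}(f\otimes\Delta^n,p) \to \Hom_{\mathcal{C}_{/B}}(f,p).\]
The same argument can be run directly with the unique lifting property: a square whose top edge is the given $X\to E$ and whose bottom edge is $X\otimes\Delta^n \to X \to B$ has a unique lift. Hence every vertex map $\underline{\Hom}(f,p)_n \to \underline{\Hom}(f,p)_0$ is a bijection. Every simplicial operator $[m]\to[n]$ commutes with some choice of vertex maps, so all structure maps are bijections compatible with these identifications. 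The simplicial set is thus isomorphic to the constant simplicial set on $\Hom_{\mathcal{C}_{/B}}(f,p)$, which is discrete.

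The step most likely to cause trouble is the second, namely the identification of the slice enrichment and the use of SM7. It relies on the cofibrancy of every object, which is exactly why that hypothesis appears in the statement.
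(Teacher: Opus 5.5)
Your proof is correct and is essentially the paper's argument. The paper uses the same three ingredients (the tensor--hom adjunction, SM7 for the cofibrant object $X$, and the orthogonality of $p$), but it first shows that $p_* \colon \underline{\Hom}_{\mathcal{C}}(X,E) \to \underline{\Hom}_{\mathcal{C}}(X,B)$ is right orthogonal to \emph{all} weak equivalences of simplicial sets, which needs Ken Brown's lemma, and then passes to the fiber over $f$; you instead work directly on the fiber and test only against the vertex inclusions $\Delta^0 \to \Delta^n$, so SM7 alone suffices, and you also spell out the final discreteness step that the paper leaves implicit.
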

\begin{proof}
    The internal mapping simplicial set in the statement is by definition the fiber over $f$ of the map of simplicial sets $p_* \colon \underline{\Hom}_{\mathcal{C}}(X, E) \to \underline{\Hom}_{\mathcal{C}}(X, B)$; see, for instance, Proposition 3.6.20 and the proof of Proposition 3.6.21 from \cite{arXiv:1310.7930}. To conclude it is hence enough to show that $p_*$ is a simplicial covering, i.e., that $p_*$ is right orthogonal to all weak equivalences of simplicial sets. By adjunction, this is equivalent to showing that $p$ is right orthogonal to all maps of the form $w \otimes \id_X$, where $w$ is a weak equivalence of simplicial sets. The definition of simplicial model categories together with Ken Brown's lemma implies that any such map is in $W$. As $p$ is a $W$-covering, we are therefore done.
\end{proof}

In the final part of this section we show that the theory of model-categorical coverings can be completely recovered from the underlying $\infty$-category, at least in the cases of interest to us. 

\begin{prop}\label{simp_cov}
    Let $B$ be a simplicial set and let $L_{\mathrm{whe}}\colon \sSet \to \Spc$ be the Dwyer-Kan localization of simplicial sets at weak equivalences. Then $L_{\mathrm{whe}}$ induces an equivalence of $1$-categories
    \[\Cov_\Delta(B) \xrightarrow{\sim} \Cov[0](L_{\mathrm{whe}}(B))\]
    where the left-hand side is the category of simplicial coverings of $B$.
\end{prop}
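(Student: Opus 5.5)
We compare simplicial coverings with $0$-truncated maps of spaces via the geometric realization, reducing to Proposition \ref{top_cov}. Concretely, a simplicial covering $p\colon E\to B$ (a map right orthogonal to all weak equivalences, equivalently by the preceding proposition to all acyclic cofibrations, hence in particular to the horn inclusions, so a Kan fibration with unique lifts) has discrete fibers. By Proposition \ref{W_and_simp_cov} with $X=\Delta^0$, the fiber over each vertex is a discrete simplicial set. Since $p$ is a Kan fibration, these fibers are homotopy fibers. Hence $L_{\mathrm{whe}}(p)$ has $0$-truncated fibers over every point and is therefore $0$-truncated. So $L_{\mathrm{whe}}$ restricts to a functor $\Cov_\Delta(B)\to\Cov[0](L_{\mathrm{whe}}(B))$. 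The target is a $1$-category, so the functor factors through the ordinary category $\Cov_\Delta(B)$.

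To show it is an equivalence, we mimic the proof of Proposition \ref{top_cov} with a commutative triangle.
\begin{enumerate}
\item Since both sides decompose as products over connected components of $B$ (a covering restricts to a covering over each component, and $L_{\mathrm{whe}}$ preserves coproducts), we reduce to $B$ connected and nonempty.
\item We choose a vertex $b\in B$ and consider the fiber functor $\Cov_\Delta(B)\to \pi_1(B,b)\text{-}\Mod(\Set)$, with the monodromy action defined by unique path lifting along edges.
\item Theorem \ref{class_cov_nconn}, applied with $\T=\Spc$ and the section given by $b$, identifies $\Cov[0](L_{\mathrm{whe}}(B))$ with $(\Omega_b L_{\mathrm{whe}} B)_{\leq 0}$-sets, i.e.\ $\pi_1(B,b)$-sets.
\item The triangle commutes, because the homotopy fiber is the strict fiber and the monodromy of a Kan fibration on $\pi_0$ of the fiber agrees with the $\Omega B$-action on the homotopy fiber.
\end{enumerate}

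It remains to show that the classical fiber functor $\Cov_\Delta(B)\to\pi_1(B)\text{-}\Mod(\Set)$ is an equivalence. I would deduce this from geometric realization. The realization $|B|$ is a CW complex, so it lies in $\Top$. Simplicial coverings of $B$ correspond to topological coverings of $|B|$: realization sends a simplicial covering to a topological covering, by Gabriel–Zisman. Conversely, pulling back $\Sing$ of a covering of $|B|$ along the unit $B\to\Sing|B|$ gives a simplicial covering. The unique lifting property shows these two constructions are inverse up to natural isomorphism. Both constructions are compatible with fibers and monodromy, since $\pi_1(B)\cong\pi_1|B|$. Proposition \ref{top_cov} then closes the argument.

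Alternatively, the equivalence with $\pi_1$-sets can be proved directly: one reconstructs the covering as the quotient $\widetilde{B}\times_{\pi_1}F$ of the universal covering. The main obstacle I expect is this classical step, namely checking that simplicial coverings of $B$ and topological coverings of $|B|$ agree with compatible monodromy. The realization of a unique-lifting Kan fibration is a covering map, which follows because it is a fibration of spaces with discrete fibers; the rest is bookkeeping. If needed, I would cite \cite{gabriel_zisman} for the simplicial–topological covering equivalence.
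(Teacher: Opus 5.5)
Your proposal is correct and uses the same two ingredients as the paper. The paper's proof is a one-liner: it composes Gabriel–Zisman's equivalence $\Cov_\Delta(B)\simeq\tCov(|B|)$ (Appendix 1, Theorem 3.2.1) with Proposition \ref{top_cov} applied to $|B|$, implicitly using $L_{\mathrm{whe}}(B)\simeq L_{\mathrm{whe}}(|B|)$. Your triangle through $\pi_1(B,b)$-sets and your separate $0$-truncatedness check are harmless but redundant scaffolding. Two small caveats: invoking Proposition \ref{W_and_simp_cov} for $\mathcal{C}=\sSet$ is mildly circular, though discreteness of the fibers follows at once from unique lifting. And your heuristic that a fibration with discrete fibers realizes to a covering map is best replaced by the Gabriel–Zisman citation you already propose.
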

\begin{proof}
    This follows directly from \cite[Appendix 1, Theorem 3.2.1]{gabriel_zisman} together with Proposition \ref{top_cov}.
\end{proof}
\begin{cor}\label{trunc_sat_to_eq}
    Let $f$ be a map in the saturation of the class of weak equivalences of simplicial sets and suppose that $L_{\mathrm{whe}}(f)$ is $0$-truncated. Then $f$ is a weak equivalence.
\end{cor}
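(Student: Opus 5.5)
We use that a $0$-truncated map in $\Spc$ which is right orthogonal to itself is an equivalence. Write $f \colon A \to B$ and let $\overline{W}$ be the saturation of the class $W$ of weak equivalences of simplicial sets, i.e.\ the smallest class containing $W$ that is closed under composition, pushouts and (transfinite) colimits in $\sSet$. The argument has two steps.

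\emph{Step 1: every simplicial covering is right orthogonal to $\overline{W}$.} Let $p \colon E \to Y$ be a simplicial covering, i.e.\ right orthogonal to $W$ in the $1$-category $\sSet$. In a $1$-category, the class of maps left orthogonal to a fixed map is saturated: it contains isomorphisms and is closed under composition, cobase change and colimits in the arrow category. Hence the left orthogonal complement of $p$ contains $\overline{W}$, so $p$ is right orthogonal to $f$.

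\emph{Step 2: $f$ is a weak equivalence.} Since $L_{\mathrm{whe}}(f)$ is $0$-truncated, it is a $0$-covering of $L_{\mathrm{whe}}(B)$. By Proposition \ref{simp_cov} there is a simplicial covering $p \colon E \to B$ with $L_{\mathrm{whe}}(p) \simeq L_{\mathrm{whe}}(f)$ in $\Spc_{/L_{\mathrm{whe}}(B)}$. By Step 1, $p$ is right orthogonal to $f$. Lifting in the square with top map $f$ and bottom map $\id_B$ against $p$ gives a map $s \colon A \to E$ over $B$, that is, $f \to p$ in $\sSet_{/B}$.

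We now show that $s$ is a weak equivalence, using two facts.
\begin{itemize}
\item Since $W$ is strongly saturated in the $\infty$-categorical sense, $L_{\mathrm{whe}}$ sends $\overline{W}$ into equivalences. Indeed, $L_{\mathrm{whe}}$ preserves homotopy colimits, and every map in $\overline{W}$ is built from weak equivalences by pushouts and colimits. Some care is needed here, since $1$-categorical pushouts are homotopy pushouts only along cofibrations. I would handle this by using that every monomorphism is a cofibration and that colimits can be replaced by homotopy colimits. This is the main obstacle, and I would reduce to the known statement that $L_{\mathrm{whe}}$ carries the saturation to equivalences. So $L_{\mathrm{whe}}(f)$ is an equivalence.
\item Both $L_{\mathrm{whe}}(f)$ and $L_{\mathrm{whe}}(p)$ are $0$-truncated over $L_{\mathrm{whe}}(B)$. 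A map between $0$-truncated objects of $\Spc_{/L_{\mathrm{whe}}(B)}$ which is an equivalence on fibres is an equivalence.
\end{itemize}

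Alternatively, one can avoid $s$ altogether and conclude directly. Since $L_{\mathrm{whe}}(f)$ is an equivalence and $f \in \overline{W}$, the map $f$ being sent to an equivalence by $L_{\mathrm{whe}}$ already means that $f$ is a weak equivalence, because a Dwyer–Kan localization detects weak equivalences: $W$ is saturated in the model-categorical sense. This is the cleanest route.

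The main work is therefore justifying that maps in $\overline{W}$ become equivalences in $\Spc$. One way is to use Proposition \ref{simp_cov} and Step 1: Step 1 shows that every simplicial covering is right orthogonal to $f$. Via Proposition \ref{simp_cov}, $L_{\mathrm{whe}}(f)$ is then right orthogonal to all $0$-truncated maps. Being itself $0$-truncated, $L_{\mathrm{whe}}(f)$ is right orthogonal to itself, hence an equivalence. Making this transfer of orthogonality to $L_{\mathrm{whe}}(f)$ precise is the step I expect to be hardest.
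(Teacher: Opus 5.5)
There is a genuine gap. Your first bullet and your ``cleanest route'' both rest on the claim that $L_{\mathrm{whe}}$ sends the whole $1$-categorical saturation $\overline{W}$ to equivalences, and this claim is false. The cobase change of the weak equivalence $\Delta^1 \to \Delta^0$ along the quotient map $\Delta^1 \to \Delta^1/\partial\Delta^1$ is $\Delta^1/\partial\Delta^1 \to \Delta^0$. This map lies in $\overline{W}$ but is not a weak equivalence. The pushout is along a non-monomorphism, so replacing colimits by homotopy colimits cannot help. Note also that if the claim were true, the $0$-truncatedness hypothesis would be superfluous.

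Independently of this, the map $s$ in Step~2 is never constructed. The square you lift in has no top map $A \to E$ (the map $f$ lands in $B$), and orthogonality of $p$ against $f$ gives nothing without one. To obtain a map $s\colon A \to E$ over $B$ realizing $L_{\mathrm{whe}}(f)\simeq L_{\mathrm{whe}}(p)$, you must compare hom-sets in $\sSet_{/B}$ with mapping spaces in $\Spc_{/L_{\mathrm{whe}}(B)}$.

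That comparison is the missing idea. It is also exactly the ``transfer'' your last paragraph leaves open; there, incidentally, $L_{\mathrm{whe}}(f)$ should be \emph{left} orthogonal to $0$-truncated maps. The needed fact is this: for a simplicial covering $c$ of $B$ and any $g \in \sSet_{/B}$, the natural map $\Hom_{\sSet_{/B}}(g,c) \to \pi_0\Map_{\Spc_{/L_{\mathrm{whe}}(B)}}(L_{\mathrm{whe}}(g), L_{\mathrm{whe}}(c))$ is a bijection onto a discrete mapping space. Indeed, $c$ is a Kan fibration and every object is cofibrant, so by Lemma \ref{calc_frac_model} the mapping space is $L_{\mathrm{whe}}$ of $\underline{\Hom}_{\sSet_{/B}}(g,c)$, which is discrete by Proposition \ref{W_and_simp_cov}.

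The paper uses this natural bijection as follows. Let $G$ be the inverse of the equivalence of Proposition \ref{simp_cov}. The bijection shows that $G \circ L_{\mathrm{whe}}$ is left adjoint to the inclusion of $\Cov_\Delta(B)$ into the full subcategory of $\sSet_{/B}$ on maps with $0$-truncated image. Hence $G \circ L_{\mathrm{whe}}$ agrees with the reflection $L$. By your Step~1, $L$ inverts the morphism $(A \xrightarrow{f} B) \to \id_B$ in $\sSet_{/B}$, so $L_{\mathrm{whe}}$ inverts it too.

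Alternatively, the same bijection repairs your Step~2 directly:
\begin{enumerate}
\item Pick $s$ over $B$ with $L_{\mathrm{whe}}(s)$ the given equivalence.
\item Lift in the square with top $s$ and bottom $\id_B$ to get $\sigma$ with $p\sigma = \id_B$ and $\sigma f = s$.
\item Then $L_{\mathrm{whe}}(\sigma) L_{\mathrm{whe}}(p) L_{\mathrm{whe}}(s) = L_{\mathrm{whe}}(s)$ with $L_{\mathrm{whe}}(s)$ invertible, so $L_{\mathrm{whe}}(\sigma)$ is inverse to $L_{\mathrm{whe}}(p)$. Hence $L_{\mathrm{whe}}(f)$ is an equivalence.
\end{enumerate}
This route needs no appeal to the false claim.
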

\begin{proof}
    Denote the codomain of $f$ by $B$. We write $(\sSet_{/B})_{\leq 0}$ for the full subcategory of $\sSet_{/B}$ spanned by maps $p \colon E \to B$ for which $L_{\mathrm{whe}}(p)$ is $0$-truncated. Furthermore, we denote the equivalence provided by Proposition \ref{simp_cov} by $F$ and we denote by $G$ its inverse. The statement follows if we show that the diagram
    \begin{center}
    \begin{tikzcd}
	    {(\sSet_{/B})_{\leq 0}} & \\
	    {\Cov_\Delta(B)} & {\Cov[0](L_{\mathrm{whe}}(B))}
	    \arrow["L"', from=1-1, to=2-1]
	    \arrow["{L_{\mathrm{whe}}}", from=1-1, to=2-2]
	    \arrow["G", from=2-2, to=2-1]
    \end{tikzcd}  
    \end{center}
    commutes, where $L$ is the Bousfield localization from this subcategory onto the subcategory of simplicial coverings of $B$. Indeed, if $f$ is a map as in the statement, it is by definition inverted by $L$. The commutativity of the diagram then implies that it is inverted by $L_{\mathrm{whe}}$ as well.

    Now we show that the above diagram commutes. First we observe that the related diagram
    \[\begin{tikzcd}
	    {(\sSet_{/B})_{\leq 0}} & \\
	    {\Cov_\Delta(B)} & {\Cov[0](L_{\mathrm{whe}}(B))}
	    \arrow["{L_{\mathrm{whe}}}", from=1-1, to=2-2]
	    \arrow["R", hook, from=2-1, to=1-1]
	    \arrow["F"', from=2-1, to=2-2]
    \end{tikzcd}\]
    commutes by definition of $F$, where $R$ is the inclusion functor. Functorially in an object $p\colon E \to B$ of $(\sSet_{/B})_{\leq 0}$, we have equivalences
    \[\begin{aligned}
        \Hom_{(\sSet_{/B})_{\leq 0}}(p, R \circ G \circ L_{\mathrm{whe}}(p)) & \simeq \pi_0 L_{\mathrm{whe}}\left(\underline{\Hom}_{(\sSet_{/B})_{\leq 0}}(p, R \circ G \circ L_{\mathrm{whe}}(p))\right) \\
        &\simeq \pi_0 \Map_{\Cov[0](L_{\mathrm{whe}}(B))}(L_{\mathrm{whe}}(p), L_{\mathrm{whe}} R \circ G \circ L_{\mathrm{whe}}(p)) \\
        &\simeq \pi_0 \Map_{\Cov[0](L_{\mathrm{whe}}(B))}(L_{\mathrm{whe}}(p), F \circ G \circ L_{\mathrm{whe}}(p)) \\
        &\simeq \Map_{\Cov[0](L_{\mathrm{whe}}(B))}(L_{\mathrm{whe}}(p), L_{\mathrm{whe}}(p)),
    \end{aligned}\]
    and we can choose the identity map on the level of the last line. This gives us a natural transformation $\id \to R \circ G \circ L_{\mathrm{whe}}$. Similarly, one can verify the triangular identities which witness this transformation as the unit for an adjunction between $R$ and $G \circ L_{\mathrm{whe}}$. By uniqueness of adjoints we therefore have $G \circ L_{\mathrm{whe}} \simeq L$ as desired.
\end{proof}

Next we state a useful lemma, which can, for instance, be found as Remark 4.3.4 in \cite{arXiv:2309.01757}.
\begin{lem}\label{calc_frac_model}
Let $\mathcal{C}$ be a right proper model $1$-category with weak equivalences $W$. Then the Dwyer-Kan localization functor $L \colon \mathcal{C} \to \mathcal{C}[W^{-1}]$ induces an equivalence
$$\mathcal{C}_{/B}[W_B^{-1}] \xrightarrow{\sim} \mathcal{C}[W^{-1}]_{/L(B)}$$
for every $B \in \mathcal{C}$, where $W_B$ consists of those maps in the slice whose underlying map lies in $W$.
\end{lem}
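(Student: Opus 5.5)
We will deduce the statement from the standard comparison of slices in a localization, using right properness to build a right fibrant replacement inside $\mathcal{C}_{/B}$. First we check that the composite $\mathcal{C}_{/B} \to \mathcal{C} \xrightarrow{L} \mathcal{C}[W^{-1}]$ lifts to a functor $\mathcal{C}_{/B} \to \mathcal{C}[W^{-1}]_{/L(B)}$: the slice $\mathcal{C}_{/B}$ has the terminal object $\id_B$, and $L$ sends it to $L(B)$. Since this functor sends $W_B$ to equivalences, it factors through $\mathcal{C}_{/B}[W_B^{-1}]$. It remains to show the induced functor $\Phi$ is essentially surjective and fully faithful.

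For this we use the slice model structure on $\mathcal{C}_{/B}$. Its weak equivalences are exactly $W_B$, so its underlying $\infty$-category is $\mathcal{C}_{/B}[W_B^{-1}]$. Choose a fibrant replacement $B \to B'$ in $\mathcal{C}$. Right properness is the key input here. Pulling back along the weak equivalence $B \to B'$ gives a Quillen adjunction between $\mathcal{C}_{/B}$ and $\mathcal{C}_{/B'}$:
\[ \Sigma : \mathcal{C}_{/B} \rightleftarrows \mathcal{C}_{/B'} : \text{pullback}. \]
This adjunction is a Quillen equivalence. The reason is that for a fibration $E' \to B'$, the pullback $E' \times_{B'} B \to E'$ is a weak equivalence by right properness. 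Hence both sides' underlying $\infty$-categories agree, and we may replace $B$ by a fibrant object.

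For fibrant $B$, we compare mapping spaces. Take objects $X \to B$ and $E \to B$, with $X$ cofibrant and $E \to B$ a fibration; these are the cofibrant and fibrant objects of the slice. The mapping space in the slice model category is the fiber of
\[ \Map(X, E) \to \Map(X, B) \]
computed via cosimplicial resolutions or a path object of $E$ over $B$. The path object of $E \to B$ can be taken fiberwise. On the other side, the mapping space in $\mathcal{C}[W^{-1}]_{/L(B)}$ is by definition the homotopy fiber of
\[ \Map(LX, LE) \to \Map(LX, LB) \]
over the given point. Because $E \to B$ is a fibration between fibrant objects, the strict fiber computed with homotopy function complexes is a homotopy fiber, so the two spaces agree and $\Phi$ is fully faithful.

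For essential surjectivity, an object of $\mathcal{C}[W^{-1}]_{/L(B)}$ is a map $L(X) \to L(B)$. Take $X$ cofibrant and $B$ fibrant. Then this map is represented by an honest map $X \to B$ in $\mathcal{C}$, so it lies in the image of $\Phi$.

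The main obstacle is making the mapping space comparison rigorous, i.e.\ identifying the homotopy function complex in $\mathcal{C}_{/B}$ with the homotopy fiber. We would first try to cite the general fact, from Lurie or Cisinski, that the fibers of $\mathcal{C}^{c}_{/B}$ compute slices when $B$ is fibrant. We would then reduce to that fact using the right-properness step above.
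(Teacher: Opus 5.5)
Your proposal is correct. The paper does not prove this lemma; it only cites Remark 4.3.4 of the reference arXiv:2309.01757. So your argument is a self-contained reconstruction rather than a variant of an argument in the text.

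What your route buys is a clean separation of roles. Right properness is used only to reduce to a fibrant base, via the Quillen equivalence $w_! \dashv w^*$ between $\mathcal{C}_{/B}$ and $\mathcal{C}_{/B'}$ for a fibrant replacement $w \colon B \to B'$. The fibrant-base case itself holds in any model category. You should say explicitly that the square formed by $\Phi_B$, $\Phi_{B'}$, $w_!$ and $L(w)_!$ commutes. This is immediate, because $w_!$ preserves all weak equivalences and commutes with the forgetful functors.

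The step you flag as the main obstacle does not need an external citation. Let $X^\bullet \to cX$ be a cosimplicial resolution in $\mathcal{C}$. Its composite with $cX \to cB$ is already a cosimplicial resolution of $X \to B$ in the slice model structure. This holds because colimits (hence latching objects) and weak equivalences in $\mathcal{C}_{/B}$ are created by the forgetful functor.

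Consequently, the homotopy function complex in the slice is literally the fiber of $\Hom(X^\bullet, E) \to \Hom(X^\bullet, B)$. That map is a Kan fibration between Kan complexes, since $E \to B$ is a fibration and $B$ is fibrant. The only remaining input is that the Dwyer--Kan identification of function complexes with mapping spaces is natural along the forgetful functor $\mathcal{C}_{/B} \to \mathcal{C}$. This is standard, since that functor preserves cofibrant objects, fibrant objects (for $B$ fibrant), weak equivalences and resolutions.

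Your essential surjectivity argument is fine as stated. An object of $\mathcal{C}[W^{-1}]_{/L(B)}$ is determined up to equivalence by the homotopy class of its structure map.
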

We can now extend the comparison from Proposition \ref{simp_cov} to simplicial sheaves on a site with enough points.
\begin{prop}\label{site_simp_cov}
    Let $\mathcal{\mathcal{G}}$ be a Grothendieck site with enough points and let $\mathcal{C} = \Shv(\mathcal{G}, \sSet)$ be the 1-category of simplicial sheaves on $\mathcal{G}$ equipped with the local model structure. Let 
    $$L\colon \mathcal{C} \to \mathcal{C}[W^{-1}]$$
    be the Dwyer-Kan localization functor to the underlying $\infty$-category, which is the hypercompleted $\infty$-topos of sheaves on $\mathcal{G}$. Then, for every object $B \in \mathcal{C}$, applying $L$ pointwise yields an equivalence of 1-categories
    $$\Cov_W(B) \xrightarrow{\sim} \left(\mathcal{C}[W^{-1}]_{/L(B)}\right)_{\leq 0}.$$
\end{prop}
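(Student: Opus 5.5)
We reduce to the simplicial-set case (Proposition \ref{simp_cov} and Corollary \ref{trunc_sat_to_eq}) by passing to stalks, using that $\mathcal{G}$ has enough points. The overall strategy mirrors the proof of Corollary \ref{trunc_sat_to_eq}. First we construct the functor, then show it is fully faithful, and finally show it is essentially surjective.

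\textbf{Well-definedness.} Let $p \colon E \to B$ be a $W$-covering. We may replace $p$ by a fibrant replacement in the slice, since $\mathcal{C}$ is right proper (local model structures on simplicial sheaves are). Then $L(p)$ is computed by $p$ itself, and by Lemma \ref{calc_frac_model} it defines an object of $\mathcal{C}[W^{-1}]_{/L(B)}$. To see that $L(p)$ is $0$-truncated, we test against $S^{1}\times X \to X$, i.e.\ against the maps $\partial\Delta^{2}\otimes X \to X$. These lie in $W$, because the local model structure is simplicial and every object is cofibrant in the injective model structure. Hence $p$ is right orthogonal to them. Combined with Proposition \ref{W_and_simp_cov}, this shows that the derived mapping spaces $\Map_{/L(B)}(L(f),L(p))$ are discrete and equal to $\Hom_{\mathcal{C}_{/B}}(f,p)$. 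This last equality uses that $p$ being right orthogonal to all of $W$ makes $\underline{\Hom}_{\mathcal{C}_{/B}}(-,p)$ invariant under cofibrant replacement and under the homotopy relation. Discreteness of these mapping spaces gives $0$-truncatedness, and the same computation gives full faithfulness at once.

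\textbf{Essential surjectivity.} This is the main step. Let $q \colon E' \to L(B)$ be $0$-truncated. Using Lemma \ref{calc_frac_model}, represent $q$ by a local fibration $p \colon E \to B$ of simplicial sheaves. Since the topos is hypercomplete with enough points, $0$-truncatedness can be checked on stalks. Hence for every point $x$ the stalk $p_x \colon E_x \to B_x$ is a Kan fibration with homotopy-discrete fibers. We then define a $W$-covering $\tilde p \colon \tilde E \to B$ together with a weak equivalence $E \to \tilde E$ over $B$. The first option is to apply the Gabriel–Zisman covering construction sectionwise, replacing $E$ by the ``fiberwise $\pi_0$'' simplicial set over $B$: an $n$-simplex of $\tilde E$ is an $n$-simplex $\sigma$ of $B$ together with a path component of the fiber of $p$ over $\sigma$. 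This construction is natural, preserves filtered colimits, and so after sheafification commutes with stalks. On stalks it is the simplicial covering from Proposition \ref{simp_cov} associated with $p_x$, and since $p_x$ has discrete homotopy fibers, the comparison $E_x\to\tilde E_x$ is a weak equivalence. Therefore $E \to \tilde E$ is a local weak equivalence.

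\textbf{Main obstacle.} It remains to show that $\tilde p$ is right orthogonal to all of $W$, not merely that it is stalkwise a simplicial covering. I would argue with the fiberwise functor $\underline{\Hom}_{/B}(-,\tilde p)$. Since $\tilde p$ is a local fibration whose stalks are coverings, its fiber over any simplex is discrete, so $\tilde p$ is a minimal-type fibration with discrete sectionwise fibers. By Proposition \ref{W_and_simp_cov} style arguments, lifts against acyclic cofibrations exist because $\tilde p$ is a fibration, and they are unique because any two lifts are fiberwise homotopic in a discrete fiber, hence equal. The sheaf condition makes the uniqueness local, so it suffices on stalks, where it follows from Proposition \ref{simp_cov}. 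Finally, the prior proposition (coverings detected by acyclic cofibrations, all objects cofibrant) upgrades this to orthogonality against all of $W$. That $L(\tilde p)\simeq q$ then follows since $E\to\tilde E$ is in $W$.
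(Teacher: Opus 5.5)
Your well-definedness and full-faithfulness steps are essentially the paper's argument. You use discreteness of $\underline{\Hom}_{\mathcal{C}_{/B}}(-,p)$ from Proposition \ref{W_and_simp_cov}, read as the derived mapping space in $\mathcal{C}[W^{-1}]_{/L(B)}$ via Lemma \ref{calc_frac_model}. No fibrant replacement is needed there: a $W$-covering already lifts against acyclic cofibrations, so it is a fibration.

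The essential surjectivity has a genuine gap, at exactly the step you flag as the main obstacle. Your $\tilde p$ comes from a sectionwise fiberwise-$\pi_0$ construction followed by sheafification. This only guarantees that $\tilde p$ is sectionwise and stalkwise a simplicial covering, i.e.\ a local fibration. Nothing shows it is a fibration of the local model structure, and these notions differ in general: locally fibrant objects need not satisfy descent. So the sentence ``lifts against acyclic cofibrations exist because $\tilde p$ is a fibration'' assumes what has to be proved. The fiberwise-homotopy uniqueness argument presupposes the same fibrancy.

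Your stalkwise uniqueness argument is fine, since $\tilde p_x$ is a simplicial covering and $w_x$ is a weak equivalence. Existence, however, does not localize: the unique lifts $Y_x\to\tilde E_x$ form only a family of maps of stalks, and such a family need not come from a map of sheaves $Y\to\tilde E$.

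The missing step is the paper's key idea. By the small object argument, factor $\tilde p=\hat p\circ f$ with $f$ in the saturated class generated by $W$ and $\hat p$ a $W$-covering. Stalk functors preserve colimits, so $f_x$ lies in the saturation of the weak equivalences of simplicial sets. It is also a map between the simplicial coverings $\tilde p_x$ and $\hat p_x$ of $B_x$. Corollary \ref{trunc_sat_to_eq} then makes $f_x$ a weak equivalence. By cancellation $f_x$ is itself right orthogonal to weak equivalences, hence an isomorphism. With enough points, $f$ is an isomorphism, so $\tilde p\cong\hat p$ is a $W$-covering.

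Once this factorization is available, you can apply it directly to your fibration $p$, observing that $L(f)$ is $0$-truncated because $L(p)$ and $L(\hat p)$ are. That is precisely the paper's proof, and it makes the fiberwise-$\pi_0$ construction unnecessary. The alternative is to prove existence of lifts by hand:
\begin{itemize}
\item locally connect each vertex of $Y$ to the image of $A$ by finite zigzag data;
\item transport lifts along these zigzags by unique path lifting;
\item glue the results using uniqueness, exploiting that $w$ induces isomorphisms on $\pi_0$- and $\pi_1$-sheaves.
\end{itemize}
This is possible but is substantial work that your proposal does not supply.
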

\begin{proof}
    First note that the functor 
    $$\Cov_W(B) \xrightarrow{\sim} \left(\mathcal{C}[W^{-1}]_{/L(B)}\right)_{\leq 0}$$
    is well-defined, since Proposition \ref{W_and_simp_cov} together with Lemma \ref{calc_frac_model} shows that any $W$-covering $p \colon E \to B$ is sent to a 0-truncated object $L(p)$ in $\mathcal{C}[W^{-1}]_{/L(B)}$. To show that it is fully faithful, we consider two $W$-coverings $p_1 \colon E_1 \to B$ and $p_2 \colon E_2 \to B$ and the span of equivalences
    \begin{center}
    \begin{tikzcd}
    \Hom_{\mathcal{C}_{/B}}(p_1, p_2) \arrow[r, "\sim"] & \Hom_{\Ho(\mathcal{C}[W^{-1}]_{/L(B)})}(L(p_1), L(p_2)) & \Map_{\mathcal{C}[W^{-1}]_{/L(B)}}(L(p_1), L(p_2)). \arrow[l, swap, "\sim"]
    \end{tikzcd}
    \end{center}
    Here the first equivalence stems from the fact that the equivalence relation defining the hom-sets in the $1$-category $\Ho(\mathcal{C}[W^{-1}]_{/L(B)})$ is the identity relation if the target is a $W$-covering. The second equivalence follows from $L(p_2)$ being 0-truncated.

    Now we tackle the essential surjectivity. For this we let $p'' \colon E'' \to L(B)$ be a 0-truncated map in $\mathcal{C}[W^{-1}]$. By Lemma \ref{calc_frac_model} together with \cite[Theorem 2.7]{mv}, there exists a morphism of simplicial sheaves $p' \colon E' \to B$ which is equivalent to $p''$ after applying the localization functor. Using the small object argument, we may factor $p'$ as a morphism $f$ in the saturated class generated by $W$ in $\mathcal{C}$ followed by a $W$-covering $p$. To conclude, it suffices to show that $f$ is inverted by the localization functor, which amounts to checking that $f$ induces weak equivalences of simplicial sets stalkwise. So let $x$ be any point of $\mathcal{G}$ and denote by $f_x$ the map induced by $f$ on stalks. As the stalk functor is cocontinuous, we know that $f_x$ belongs to the saturated class generated by weak equivalences of simplicial sets. Since $L(p')$ and $L(p)$ are $0$-truncated, truncation estimates yield that $L(f)$ is 0-truncated. So $L_{\mathrm{whe}}(f_x)$ is 0-truncated as well. Thus we can apply Corollary \ref{trunc_sat_to_eq} to conclude that $f_x$ is a weak equivalence, as desired.
\end{proof}
\begin{rem}
    Using the global model structure in place of the local one, we can get rid of the hypercompletion arising from the localization functor above. The essential surjectivity is then shown by arguing sectionwise rather than stalkwise.
\end{rem}
Finally, we now specialize to simplicial Nisnevich sheaves and $\A^1$-weak equivalences.
\begin{thm}\label{morel_a1_cov}
    Let $S$ be any base scheme and let $\mathcal{C}$ be the 1-category of simplicial Nisnevich sheaves on $\Sm_S$ equipped with the local model structure. Let $W$ be the class of local weak equivalences and let $W_{\A^1}$ be the class of $\A^1$-weak equivalences as in \cite{mv}. Furthermore, let $L \colon \mathcal{C} \to \mathcal{C}[W^{-1}]$ be the Dwyer-Kan localization functor. For any $B \in \mathcal{C}$, the equivalence of Proposition \ref{site_simp_cov} identifies the subcategory $\Cov_{W_{\A^1}}(B) \subset \Cov_{W}(B)$ with the full subcategory of $\Cov[0](L(B))$ spanned by maps that are right orthogonal to the class of $\A^1$-equivalences in $\mathcal{C}[W^{-1}]$.
\end{thm}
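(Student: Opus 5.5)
First, every $W_{\A^1}$-covering is a $W$-covering, because local weak equivalences are $\A^1$-weak equivalences. So $\Cov_{W_{\A^1}}(B)$ is a full subcategory of $\Cov_W(B)$. Proposition \ref{site_simp_cov} identifies $\Cov_W(B)$ with $\Cov[0](L(B))$; the Nisnevich site on $\Sm_S$ has enough points. It therefore suffices to show the following: for a $W$-covering $p\colon E \to B$, the map $p$ is right orthogonal to $W_{\A^1}$ in $\mathcal{C}$ if and only if $L(p)$ is right orthogonal to the $\A^1$-equivalences in $\mathcal{C}[W^{-1}]$. Here the $\A^1$-equivalences in $\mathcal{C}[W^{-1}]$ are the images of $W_{\A^1}$, equivalently the strongly saturated class generated by the projections $\A^1\times U \to U$. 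By Corollary \ref{ortho_to_local}, on both sides we may pass to locality statements in the slices $\mathcal{C}_{/B}$ and $\mathcal{C}[W^{-1}]_{/L(B)}$. By Lemma \ref{calc_frac_model} (the local model structure is right proper), the slice of the localization is the localization of the slice.

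The key comparison is between hom-sets and mapping spaces. For an object $f\colon X\to B$ of $\mathcal{C}_{/B}$, all objects being cofibrant and $p$ a $W$-covering, I claim
\[\Hom_{\mathcal{C}_{/B}}(f,p)\cong \pi_0\Map_{\mathcal{C}[W^{-1}]_{/L(B)}}(L(f),L(p)) \simeq \Map_{\mathcal{C}[W^{-1}]_{/L(B)}}(L(f),L(p)).\]
The second equivalence holds because $L(p)$ is $0$-truncated. The first is the argument from the fully faithfulness part of Proposition \ref{site_simp_cov}, applied now to an arbitrary source rather than a covering. Every object is cofibrant, and $p$ is fibrant in $\mathcal{C}_{/B}$: it has the right lifting property against acyclic cofibrations, since it is even orthogonal to $W$. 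So homotopy classes compute maps in the homotopy category. The homotopy relation is trivial because $p$ is right orthogonal to $X\otimes\Delta^1 \to X$, or via Proposition \ref{W_and_simp_cov} the simplicial mapping set is discrete.

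Both directions then follow from naturality of this identification in $f$. A map $w\colon f\to f'$ in $\mathcal{C}_{/B}$ with underlying map in $W_{\A^1}$ induces a bijection on $\Hom(-,p)$ if and only if $L(w)$ induces an equivalence on $\Map(-,L(p))$. For the direction from $\mathcal{C}[W^{-1}]$ to $\mathcal{C}$ this is immediate, since $L(W_{\A^1})$ consists of $\A^1$-equivalences. For the converse I need every $\A^1$-equivalence in $\mathcal{C}[W^{-1}]_{/L(B)}$ to be equivalent to the image of a map in $(W_{\A^1})_B$. I would argue this via Lemma \ref{calc_frac_model}: any morphism in $\mathcal{C}[W^{-1}]_{/L(B)}$ is represented by a zigzag with backward arrows in $W$, and hence by a map in $\mathcal{C}_{/B}$ after cofibrant/fibrant replacement, which is harmless here. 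That map lies in $W_{\A^1}$ because $\A^1$-weak equivalences are detected after $L$, by the definition of \cite{mv} as $\A^1$-local equivalences.

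I expect this last step to be the main obstacle: identifying $L(W_{\A^1})$, up to equivalence and within slices, with the $\A^1$-equivalences of the $\infty$-topos. It requires recognizing that the Morel--Voevodsky $\A^1$-localization of the local model structure presents the $\infty$-categorical localization at $\{\A^1\times U\to U\}$. I would cite the standard comparison of left Bousfield localizations with $\infty$-categorical localizations for this. I would also note that hypercompletion does not matter, because the $\A^1$-equivalences contain the $\infty$-connected maps.
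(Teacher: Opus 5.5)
Your proposal is correct and follows essentially the paper's argument: you apply Corollary \ref{ortho_to_local} on both sides, identify $\Hom_{\mathcal{C}_{/B}}(-,p)$ with $\Map_{\mathcal{C}[W^{-1}]_{/L(B)}}(L(-),L(p))$ using that maps into a $W$-covering admit no nontrivial homotopies, and use Lemma \ref{calc_frac_model} to lift $\A^1$-equivalences in the slice to maps in $W_{\A^1}$ over $B$. The paper phrases the middle step via $L_{\mathrm{whe}}$ of the discrete simplicial mapping sets rather than via $\pi_0$ and $0$-truncatedness, and you helpfully make explicit the comparison between the Morel--Voevodsky Bousfield localization and the $\infty$-categorical one, which the paper uses tacitly; your closing remark on hypercompletion is unnecessary, since everything takes place in $\mathcal{C}[W^{-1}]$, which is already hypercomplete, and for arbitrary $S$ the claim that $\A^1$-equivalences contain all $\infty$-connected maps would itself need justification.
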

\begin{proof}
    Let $p \colon E \to B$ be a $W$-covering and let $W_{\A^1\!,\, B}$ (respectively $W_{\A^1\!,\, L(B)}$) be the class of maps in $\mathcal{C}_{/B}$ (respectively in $\mathcal{C}[W^{-1}]_{/L(B)}$) which get mapped to $W_{\A^1}$ (respectively, to $\A^1$-equivalences in $\mathcal{C}[W^{-1}]$) under the forgetful functor. We argue in four steps:
    \begin{enumerate}
        \item[(i)] Applying Corollary \ref{ortho_to_local} with ambient category $\mathcal{C}[W^{-1}]$, we see that $L(p)$ is an $\A^1$-fibration if and only if $\Map_{\mathcal{C}[W^{-1}]_{/L(B)}}(-, L(p))$ sends maps in $W_{\A^1\!,\, L(B)}$ to equivalences.
        \item[(ii)] We claim that (i) is equivalent to $\underline{\Hom}_{\mathcal{C}_{/B}}(-, p)$ sending maps in $W_{\A^1\!,\, B}$ to weak equivalences of simplicial sets. Indeed, if we denote by $L_{\mathrm{whe}}\colon \sSet \to \Spc$ the localization of simplicial sets at weak equivalences, then there is an equivalence of spaces
        \[L_{\mathrm{whe}}\left(\underline{\Hom}_{\mathcal{C}_{/B}}(-, p)\right) \cong \Map_{\mathcal{C}[W^{-1}]_{/L(B)}}(L(-), L(p)).\]
        By Lemma \ref{calc_frac_model} together with \cite[Theorem 2.7]{mv}, each morphism in $W_{\A^1\!,\, L(B)}$ can be lifted to a morphism in $W_{\A^1\!,\, B}$ via $L$. So if $p$ fulfills (ii), then it also fulfills (i). Conversely, $L$ sends maps in $W_{\A^1, B}$ to maps in $W_{\A^1\!,\, L(B)}$. Thus if $p$ satisfies (i), then $\underline{\Hom}_{\mathcal{C}_{/B}}(-, p)$ sends maps in $W_{\A^1\!,\, B}$ to maps of simplicial sets turned by $L_{\mathrm{whe}}$ into equivalences of spaces. This is equivalent to (ii).
        \item[(iii)] We have that (ii) is equivalent to $p$ being local in $\mathcal{C}_{/B}$ with respect to maps in $W_{\A^1\!,\, B}$. Indeed, by Proposition \ref{W_and_simp_cov}, the functor $\underline{\Hom}_{\mathcal{C}_{/B}}(-, p)$ lands in discrete simplicial sets and a weak equivalence of discrete simplicial sets is necessarily an isomorphism.
        \item[(iv)] Appealing again to Corollary \ref{ortho_to_local}, this time with ambient category $\mathcal{C}$, we conclude that (iii) holds if and only if $p$ is a $W_{\A^1}$-covering.
    \end{enumerate} 
\end{proof}

\section{Coverings in \texorpdfstring{$\A^1$}{A1}-Homotopy Theory}
In this section we turn our focus towards coverings in $\A^1$-homotopy theory. We begin by introducing higher $\A^1$-coverings and developing their basic properties. Afterwards, we provide a Galois-theoretic classification result and discuss examples. Finally, we discuss Morel's proposed geometric criterion from Remark 7.3 of \cite{morel} for $\A^1$-coverings between smooth schemes on the level of sheaves and presheaves, where we obtain different answers.
\subsection{Higher $\A^1$-coverings}
Theorem \ref{morel_a1_cov} from the previous section suggests extending the notion of $\A^1$-coverings to higher levels of truncation while keeping the right orthogonality to $\A^1$-equivalences. This leads to the following definition.
\begin{de}\label{higher_a1_cov_def}
    Let $0 \leq n \leq \infty$. A map $p\colon E \to B$ in $\Shv(\Sm_S)$ is an $\A^1$-$n$-covering if
    \begin{enumerate}
        \item[(i)] it is an $\A^1$-fibration, i.e., it is right orthogonal to the class of $\A^1$-equivalences;
        \item[(ii)] $\Lmot(p)$ is $n$-truncated.
    \end{enumerate}  
\end{de}
If $n = \infty$, this is consistent with Definition 3.4.8 of \cite{maity} and coincides with the concept of $L_{\A^1}$-fibration from Section 6 of \cite{macpherson}. We denote by $\Cov[n]_{\A^1}(B)$ the full subcategory of $\Sh(\Sm_S)_{/B}$ spanned by $\A^1$-$n$-coverings. For $n=0$, this definition was also proposed in \cite{maity} and many of the following results have been established in this case in Section 3.4 of \cite{maity}.
\begin{rem}
    If the base scheme $S$ is qcqs and finite dimensional, the $\infty$-topos $\Sh(\Sm_S)$ is hypercomplete; see Proposition A.3 of \cite{norms}. Combining Lemma 7.2 of \cite{morel} with Theorem \ref{morel_a1_cov}, we obtain an equivalence between Morel's 1-category of $\A^1$-coverings of a simplicial sheaf $B$ and $\Cov[0]_{\A^1}(L(B))$, where $L$ is the Dwyer-Kan localization from simplicial sheaves to sheaves.
\end{rem}
The rest of this subsection will be devoted to analyzing this definition. In particular, we want to show that higher $\A^1$-coverings admit a Galois-theory analogous to that of higher coverings in an $\infty$-topos. To that end, we introduce some auxiliary definitions and lemmas.
\begin{de}\label{a1_mod}
    The $\A^1$-modality $(\mathcal{L}_{\A^1}, \mathcal{R}_{\A^1})$ is the modality on $\Shv(\Sm_S)$ whose left class is generated by the projections $\A^1 \times \ X \to X$ for smooth schemes $X$ over $S$.
\end{de}
We first establish a base change property for reflective factorization systems associated to modalities.
\begin{prop}\label{equiv_bc}
    Let $(\mathcal{L}, \mathcal{R})$ be a modality on an $\infty$-topos\footnote{It actually suffices that $\T$ is a presentable category whose colimits are stable under base change.} $\T$ and let $\left(\mathcal{L}^{3/2}, \mathcal{R}^{2/3}\right)$ be the reflective factorization system on $\T$ associated to it, as in \cite[\S 1.3.3]{soa}. Then maps in $\mathcal{L}^{3/2}$ are stable under base change along maps in $\mathcal{R}^{2/3}$. Moreover, any commutative square
    \begin{equation}\label{easy_cart}
        \begin{tikzcd}
        	X & Y \\
        	A & B
        	\arrow["{l'}", from=1-1, to=1-2]
        	\arrow["r"', from=1-1, to=2-1]
        	\arrow["{r'}", from=1-2, to=2-2]
        	\arrow["l"', from=2-1, to=2-2]
        \end{tikzcd}
    \end{equation}
    in which $l,l' \in \mathcal{L}^{3/2}$ and $r, r' \in \mathcal{R}^{2/3}$ is automatically Cartesian.
\end{prop}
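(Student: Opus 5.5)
We have a modality $(\mathcal{L},\mathcal{R})$, i.e.\ a stable factorization system whose $\mathcal{L}$ is stable under base change, and its associated reflective factorization system $(\mathcal{L}^{3/2},\mathcal{R}^{2/3})$. Here $\mathcal{R}^{2/3}$ is the class of maps $f\colon X\to Y$ such that $X\to Y$ is $\mathcal{R}$-local, i.e.\ $X$ and $Y$ are modal objects ($X\to *$ and $Y\to *$ lie in $\mathcal{R}$). The class $\mathcal{L}^{3/2}$ consists of maps inverted by the reflector $\rho\colon \T\to\T_{\mathcal{R}}$, where $\rho(X)$ is given by the $(\mathcal{L},\mathcal{R})$-factorization of $X\to *$. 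The argument rests on two facts about modalities: $\mathcal{R}$-modal objects are closed under limits, and $\rho$ preserves finite products. More to the point, $\rho$ preserves pullbacks along maps between modal objects. I will derive this from base-change stability of $\mathcal{L}$, following \cite[\S 1.3.3]{soa}.

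\textbf{The Cartesian square.} I treat the second claim first. Since $r\colon X\to A$ lies in $\mathcal{R}^{2/3}$ and $l\colon A\to B$ lies in $\mathcal{L}^{3/2}$, the object $\rho(A)\simeq\rho(B)$ and $A\to\rho(A)$ is the unit. Every object in the square is modal, because $r,r'\in\mathcal{R}^{2/3}$ means that $X,A,Y,B$ are modal. Hence $l$ and $l'$ are maps in $\mathcal{L}^{3/2}$ between modal objects, so $\rho(l)\simeq l$ is an equivalence, and likewise for $l'$. A commutative square whose horizontal sides are equivalences is Cartesian.

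On rereading the definition in \cite{soa}, however, I am no longer sure $\mathcal{R}^{2/3}$ means ``between modal objects''. It is more plausibly the class of maps $f$ such that the naturality square of $f$ for the unit $\eta\colon \id\to\rho$ is Cartesian, i.e.\ the ``modal-fibration'' maps. The argument should be set up to handle that definition, so I adopt it.

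With this definition, $r\in\mathcal{R}^{2/3}$ means the square with vertices $X,\rho X,A,\rho A$ is Cartesian. Since $l\in\mathcal{L}^{3/2}$, the map $\rho(l)$ is an equivalence, so $\rho A\simeq\rho B$. I then compare the square \eqref{easy_cart} with the square obtained by applying $\rho$, in which $\rho(l)$ and $\rho(l')$ are equivalences. Pasting the unit squares gives
\[X\simeq A\times_{\rho A}\rho X\simeq A\times_{\rho B}\rho Y \quad\text{and}\quad Y\simeq B\times_{\rho B}\rho Y.\]
Therefore $X\simeq A\times_B B\times_{\rho B}\rho Y\simeq A\times_B Y$ by pasting of pullbacks, which is exactly the Cartesianness of \eqref{easy_cart}.

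\textbf{Base change stability.} Let $l\colon A\to B$ be in $\mathcal{L}^{3/2}$ and $r'\colon Y\to B$ in $\mathcal{R}^{2/3}$, and set $X=A\times_B Y$. By pasting, $r\colon X\to A$ is a pullback of $r'$ and of $\rho Y\to\rho B$, namely $X\simeq A\times_{\rho B}\rho Y$. We must show that $\rho(X)\to\rho(Y)$ is an equivalence. The key input, and the main obstacle, is that $\rho$ preserves pullbacks of the form $A\times_{\rho B}\rho Y$ where two of the three objects are modal. I would prove this using base-change stability of $\mathcal{L}$ as follows. The unit $A\to\rho A$ lies in $\mathcal{L}$. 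Its base change $A\times_{\rho B}\rho Y\to\rho A\times_{\rho B}\rho Y$ therefore lies in $\mathcal{L}$. The target $\rho A\times_{\rho B}\rho Y$ is modal, since modal objects are closed under limits. So this base change is the unit of $X$, giving $\rho X\simeq\rho A\times_{\rho B}\rho Y\simeq\rho Y$, because $\rho A\simeq\rho B$. This proves $l'\in\mathcal{L}^{3/2}$. The delicate point to verify is the identification of maps in $\mathcal{L}$ with modal target as units. This holds by uniqueness of $(\mathcal{L},\mathcal{R})$-factorizations of $X\to *$.
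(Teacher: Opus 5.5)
Your argument is correct and takes a genuinely different route from the paper, but it needs two repairs.

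\textbf{Repairs.} First, drop the opening paragraph. Reading $\mathcal{R}^{2/3}$ as ``maps between modal objects'' cannot be right, since a right class contains every equivalence, including identities of non-modal objects.

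Second, you cannot simply adopt the description of $\mathcal{R}^{2/3}$ as the maps whose $\eta$-naturality square is Cartesian. As the paper uses it, $\mathcal{L}^{3/2}$ is the strong saturation of $\mathcal{L}$ (equivalently, the $\rho$-equivalences) and $\mathcal{R}^{2/3}=(\mathcal{L}^{3/2})^{\perp}$. The identification you need is the standard Cassidy--H\'ebert--Kelly characterization for semi-left-exact reflections, and your own key lemma proves it:
\begin{itemize}
\item For $f\colon X\to Y$, put $P=Y\times_{\rho Y}\rho X$.
\item The projection $P\to\rho X$ is a base change of $\eta_Y\in\mathcal{L}$ with modal target. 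Hence it is the unit of $P$, and $X\to P$ is a $\rho$-equivalence.
\item The map $P\to Y$ is a base change of $\rho f$, which lies in $(\mathcal{L}^{3/2})^{\perp}$ because its source and target are modal.
\item So a Cartesian unit square forces $f\in\mathcal{R}^{2/3}$. Conversely, if $f\in\mathcal{R}^{2/3}$, right cancellation puts $X\to P$ in $\mathcal{R}^{2/3}$ as well, so it is an equivalence.
\end{itemize}
You should also check, via the universal property of $\eta_X$, that the equivalence $\rho X\simeq\rho A\times_{\rho B}\rho Y\simeq\rho Y$ you produce is $\rho(l')$.

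\textbf{Comparison of routes.} The paper never uses the reflector. It considers the class $\mathcal{Q}\subset\mathcal{L}^{3/2}$ of maps whose base changes along $\mathcal{R}^{2/3}$ stay in $\mathcal{L}^{3/2}$, and shows that $\mathcal{Q}$ is strongly saturated and contains $\mathcal{L}$.
\begin{itemize}
\item Closure under colimits uses universality of colimits.
\item 1st-of-3 uses a factorization of $g\circ r$, together with the observation that when $l\in\mathcal{Q}$ the comparison map $X\to A\times_B Y$ lies in both $\mathcal{L}^{3/2}$ and $\mathcal{R}^{2/3}$.
\item The Cartesian square is then a by-product of base-change stability.
\end{itemize}
You instead compute $\rho(A\times_B Y)\simeq\rho A\times_{\rho B}\rho Y$ directly. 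You get the Cartesian square independently, by pasting with the unit squares of $r$ and $r'$.

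Your route is more explicit. It uses only finite limits and base-change stability of $\mathcal{L}$, so the footnote's colimit hypothesis becomes unnecessary. Its description of $\mathcal{R}^{2/3}$ is the general form of the criterion \cite[Proposition 3.1]{macpherson} that the paper invokes later. The paper's saturation argument works purely with closure properties of the two classes, and so never has to establish the semi-left-exactness description of $\mathcal{R}^{2/3}$ at all.
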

\begin{proof}
    Let $\mathcal{Q} \subset \mathcal{L}^{3/2}$ be the subclass of maps whose base changes along maps in $\mathcal{R}^{2/3}$ belong to $\mathcal{L}^{3/2}$. Our goal is to prove that the above inclusion is an equality. 
    
    Since $\mathcal{L}$ is stable under base change, we have $\mathcal{L} \subset \mathcal{Q}$. Moreover, $\mathcal{Q}$ contains all equivalences and is stable under composition. As colimits in $\mathcal{T}$ are stable under base change, $\mathcal{Q}$ is also stable under colimits.

    To show that $\mathcal{Q}$ is closed under 1st-of-3, we first consider a square as in (\ref{easy_cart}) where $l$ is furthermore assumed to belong to $\mathcal{Q}$. By definition, the base change of $l$ along $r'$ belongs to $\mathcal{L}^{3/2}$, which yields that the comparison map $X \to A \times_B Y$ is in $\mathcal{L}^{3/2}$. However, the latter is also in $\mathcal{R}^{2/3}$, by the stability of $\mathcal{R}^{2/3}$ under base change and 1st-of-3. Hence it is invertible, so the square is Cartesian.

    Now let $f \colon A \to B$ and $g \colon B \to C$ be maps with $g, g \circ \hspace{-1pt} f \in \mathcal{Q}$, and consider a map $r \colon X \to B$ in $\mathcal{R}^{2/3}$. Factoring the composite $g \circ r$ using the $(\mathcal{L}^{3/2},\mathcal{R}^{2/3})$ factorization system and pulling back $f$ along $r$, we obtain a commutative diagram
        \[\begin{tikzcd}
        	W & X & Y \\
        	A & B & C
        	\arrow["{f'}", from=1-1, to=1-2]
        	\arrow[from=1-1, to=2-1]
        	\arrow["{g'}", from=1-2, to=1-3]
        	\arrow["r", from=1-2, to=2-2]
        	\arrow["{r'}", from=1-3, to=2-3]
        	\arrow["f"', from=2-1, to=2-2]
        	\arrow["g"', from=2-2, to=2-3]
        \end{tikzcd}.\]
    Here we have $g' \in \mathcal{L}^{3/2}$ and $r' \in \mathcal{R}^{2/3}$. The left square is Cartesian by construction, and the right square is Cartesian by the previous paragraph. Thus $g' \circ f'$ is a base change of $g \circ \hspace{-1pt}f$ along $r'$, so it belongs to $\mathcal{L}^{3/2}$. Since $\mathcal{L}^{3/2}$ is closed under 1st-of-3, we obtain $f' \in \mathcal{L}^{3/2}$, and for the same reason we also have $f \in \mathcal{L}^{3/2}$. This shows that $f \in \mathcal{Q}$.

    We have therefore shown that $\mathcal{Q}$ is a strongly saturated class containing $\mathcal{L}$, and it is contained in its strong saturation $\mathcal{L}^{3/2}$. Hence $\mathcal{Q} = \mathcal{L}^{3/2}$, which proves the first part of the statement. The second part now follows from the argument for squares of the form (\ref{easy_cart}) above.
\end{proof}
\begin{rem}\label{a1_mod_recover}
    Note that when the modality in the above statement is the $\A^1$-modality, the resulting reflective factorization system is that of ($\A^1$-equivalences, $\A^1$-fibrations).
\end{rem}
Applied to the $\A^1$-modality, the above statement shows that higher $\A^1$-coverings can be transported along $\A^1$-equivalences of the base. What follows is hence a generalization of Lemma 7.9 of \cite{morel}.
\begin{cor}
    Let $p \colon E \to B$ be an $\A^1$-$n$-covering and let $w \colon B \to B'$ be an $\A^1$-equivalence. Up to contractible choice, this datum can be uniquely completed to a Cartesian square
    \[\begin{tikzcd}[cramped]
    	E & {E'} \\
    	B & {B'}
    	\arrow["{w'}", from=1-1, to=1-2]
    	\arrow["p"', from=1-1, to=2-1]
    	\arrow["\lrcorner"{anchor=center, pos=0.125}, draw=none, from=1-1, to=2-2]
    	\arrow["{p'}", from=1-2, to=2-2]
    	\arrow["w"', from=2-1, to=2-2]
    \end{tikzcd}\]
    where $w' \colon E \to E'$ is an $\A^1$-equivalence and $p' \colon E' \to B'$ an $\A^1$-$n$-covering.
\end{cor}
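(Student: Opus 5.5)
We construct the square by factoring the composite $w \circ p \colon E \to B'$ with the reflective factorization system $(\mathcal{L}^{3/2},\mathcal{R}^{2/3})$ of the $\A^1$-modality. By Remark \ref{a1_mod_recover}, this is the system of ($\A^1$-equivalences, $\A^1$-fibrations). The factorization is unique up to contractible choice and reads $E \xrightarrow{w'} E' \xrightarrow{p'} B'$, with $w'$ an $\A^1$-equivalence and $p'$ an $\A^1$-fibration. Together with $p$ and $w$ this gives a commutative square of the shape (\ref{easy_cart}): the horizontal maps $w, w'$ lie in $\mathcal{L}^{3/2}$ and the vertical maps $p, p'$ lie in $\mathcal{R}^{2/3}$, since $p$ is an $\A^1$-fibration by assumption. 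The second part of Proposition \ref{equiv_bc} then says the square is automatically Cartesian. This settles existence.

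For uniqueness, suppose we are given any Cartesian square of the stated form, with $w'$ an $\A^1$-equivalence and $p'$ an $\A^1$-fibration. Then $p' \circ w' \simeq w \circ p$ is a factorization of $w \circ p$ in the factorization system. The space of such factorizations is contractible, so the completion of the datum $(p, w)$ is unique up to contractible choice. For this I would use the standard fact that in an orthogonal factorization system the space of factorizations of a fixed map is contractible.

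It remains to show that $p'$ is $n$-truncated after motivic localization. Apply $\Lmot$ to the square. Since $w$ and $w'$ are $\A^1$-equivalences (and Nisnevich sheafification is already built in), $\Lmot(w)$ and $\Lmot(w')$ are equivalences. Hence $\Lmot(p')$ is equivalent, in the arrow category, to $\Lmot(p)$. The latter is $n$-truncated, so $\Lmot(p')$ is $n$-truncated too, and $p'$ is an $\A^1$-$n$-covering. Note that we never need $\Lmot$ to preserve the pullback; we only use that it inverts the horizontal arrows.

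I expect the main subtle point to be the uniqueness statement, specifically making precise what ``up to contractible choice'' means. I would phrase it as follows: the space of Cartesian completions with the required properties is identified with the space of $(\mathcal{L}^{3/2},\mathcal{R}^{2/3})$-factorizations of $w \circ p$, which is contractible. Everything else is a direct application of Proposition \ref{equiv_bc}.
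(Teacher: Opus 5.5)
Your proposal is correct and is essentially the paper's proof. You factor $w \circ p$ via the ($\A^1$-equivalence, $\A^1$-fibration) system and get Cartesianness from Proposition \ref{equiv_bc} with Remark \ref{a1_mod_recover}. You transfer $n$-truncatedness because $\Lmot$ inverts the horizontal arrows, and you derive uniqueness from the contractibility of the space of factorizations, all exactly as the paper does.
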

\begin{proof}
    Consider the square 
        \[\begin{tikzcd}[cramped]
    	E & {E'} \\
    	B & {B'}
    	\arrow["{w'}", from=1-1, to=1-2]
    	\arrow["p"', from=1-1, to=2-1]
    	\arrow["{p'}", from=1-2, to=2-2]
    	\arrow["w"', from=2-1, to=2-2]
    \end{tikzcd}\]given by factoring the composite $w \circ p$ into an $\A^1$-equivalence $w' \colon E \to E'$ followed by an $\A^1$-fibration $p' \colon E' \to B'$. By Proposition \ref{equiv_bc} together with Remark \ref{a1_mod_recover}, it is Cartesian. Moreover, since the horizontal arrows are $\A^1$-equivalences, $\Lmot(p')$ is $n$-truncated if and only if $\Lmot(p)$ is. This gives the desired square.
    
   Let us now verify the uniqueness up to contractible choice. The datum of any square as in the statement yields a factorization of the composite $w \circ p$ into an $\A^1$-equivalence followed by an $\A^1$-fibration. As this is unique up to contractible choice, so is the datum.
\end{proof}
\begin{cor}\label{middle_eq}
    Pulling back along the canonical map $B \to \Lmot(B)$ defines an equivalence of $(n+1)$-categories
    \[\Cov[n]_{\Ao}(\Lmot(B)) \xrightarrow{\sim} \Cov[n]_{\Ao}(B),\]
    whose inverse is given by applying $\Lmot$ objectwise.
\end{cor}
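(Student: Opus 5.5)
The plan is to apply the preceding corollary to the $\A^1$-equivalence $\eta\colon B \to \Lmot(B)$, the unit of the motivic localization. That corollary says every $\A^1$-$n$-covering $p\colon E \to B$ sits, uniquely up to contractible choice, in a Cartesian square whose top map $w'\colon E \to E'$ is an $\A^1$-equivalence and whose right map $p'\colon E' \to \Lmot(B)$ is an $\A^1$-$n$-covering. I will show that $E'$ can be identified with $\Lmot(E)$ and $p'$ with $\Lmot(p)$.

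\textbf{Step 1: identify $p'$ with $\Lmot(p)$.} Since $\Lmot(B)$ is motivic and $p'$ is an $\A^1$-fibration, $E' \to \Lmot(B) \to *$ is a composite of $\A^1$-fibrations. Hence $E'$ is $\A^1$-local, i.e.\ it lies in $\Sh_{\A^1}(\Sm_S)$. A map $E \to E'$ that is an $\A^1$-equivalence with $\A^1$-local target is the localization map, so $E' \simeq \Lmot(E)$ and $p' \simeq \Lmot(p)$. Consequently the functor ``apply $\Lmot$ objectwise'' sends $\Cov[n]_{\Ao}(B)$ into $\Cov[n]_{\Ao}(\Lmot(B))$, and its output is exactly the square produced by the corollary. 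In particular, the naturality square of $\eta$ at $p$ is Cartesian, so pulling $\Lmot(p)$ back along $\eta$ recovers $p$.

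\textbf{Step 2: pullback lands in coverings and $\Lmot$ inverts it.} Conversely, let $q\colon F \to \Lmot(B)$ be an $\A^1$-$n$-covering and set $\eta^*q\colon B\times_{\Lmot(B)} F \to B$. Being right orthogonal to a class is stable under base change, so $\eta^*q$ is an $\A^1$-fibration. By Proposition \ref{equiv_bc} (with Remark \ref{a1_mod_recover}), $\A^1$-equivalences are stable under base change along $\A^1$-fibrations, so the top map $B\times_{\Lmot(B)}F \to F$ is an $\A^1$-equivalence. Applying $\Lmot$ therefore identifies $\Lmot(\eta^*q)$ with $\Lmot(q) \simeq q$, where the last equivalence holds because $F$ and $\Lmot(B)$ are already motivic. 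This shows both that $\Lmot(\eta^*q)$ is $n$-truncated, so $\eta^*q$ is an $\A^1$-$n$-covering, and that $\Lmot\circ\eta^* \simeq \id$. Together with Step 1, which gives $\eta^*\circ\Lmot \simeq \id$, the two functors are mutually inverse.

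\textbf{Step 3: naturality.} To make these identifications natural rather than objectwise, I would argue that $\eta^*$ is right adjoint to $\Lmot$ restricted to slices: for a map $E \to B$ and $q$ over $\Lmot(B)$, maps $\Lmot(E) \to F$ over $\Lmot(B)$ correspond to maps $E \to F$ over $\Lmot(B)$, because $F$ is motivic. The preceding steps then show that the unit and counit of this adjunction are equivalences on the relevant subcategories. The categorical level $(n+1)$ is automatic, since both sides are full subcategories of slices of $n$-truncated maps.

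The main obstacle is Step 1, the claim that the factorization $E \to E' \to \Lmot(B)$ really is the motivic localization of $E$. Everything hinges on $\A^1$-fibrations over an $\A^1$-local base having $\A^1$-local source, which is immediate from closure under composition and the identification of $\A^1$-local objects with $\A^1$-fibrant objects over $*$.
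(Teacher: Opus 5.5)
Your proposal is correct and follows the route the paper intends. The paper gives no separate proof; the statement is read off from the preceding transport corollary applied to the $\A^1$-equivalence $B \to \Lmot(B)$, where the ($\A^1$-equivalence, $\A^1$-fibration) factorization of $E \to B \to \Lmot(B)$ is $E \to \Lmot(E) \to \Lmot(B)$. Your Steps 2 and 3 make explicit what the paper leaves unstated: the inverse direction via Proposition \ref{equiv_bc}, and naturality via the slice adjunction between $\Lmot$ and pullback along $B \to \Lmot(B)$.
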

Next we show that, for an $\A^1$-fibration, truncation can be checked before or after applying the motivic localization functor.
\begin{prop}\label{trunc_before_after}
    Let $p \colon E \to B$ be an $\A^1$-fibration. Then the following are equivalent:
    \begin{enumerate}
        \item[(i)] The map $p$ is an $n$-truncated map in $\Shv(\Sm_S)$.
        \item[(ii)] The map $p$ is an $n$-truncated object in $\Cov[\infty]_{\Ao}(B)$.
        \item[(iii)] The map $\Lmot(p)$ is an $n$-truncated map in $\Shv(\Sm_S)$.
    \end{enumerate}
\end{prop}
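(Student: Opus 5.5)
The argument proves (i)$\Leftrightarrow$(ii) and then (i)$\Leftrightarrow$(iii).

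\emph{Step 1: (i)$\Leftrightarrow$(ii).} A map $p$ is $n$-truncated in $\Shv(\Sm_S)$ exactly when it is $n$-truncated as an object of $\Shv(\Sm_S)_{/B}$, i.e. when $\Map_{/B}(q,p)$ is $n$-truncated for every $q$. Since $\Cov[\infty]_{\Ao}(B)$ is a full subcategory of the slice, (i)$\Rightarrow$(ii) is immediate. For the converse, I would use that $\Cov[\infty]_{\Ao}(B)$ is reflective in $\Shv(\Sm_S)_{/B}$. Indeed, it consists of the $\mathcal{R}^{2/3}$-maps into $B$ by Remark \ref{a1_mod_recover}, and the reflector factors $q$ as an $\A^1$-equivalence $\bar q$ followed by an $\A^1$-fibration. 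Since $p$ is an $\A^1$-fibration, it is local for the reflection, so $\Map_{/B}(q,p)\simeq \Map_{/B}(\bar q,p)$. The right-hand side is computed inside $\Cov[\infty]_{\Ao}(B)$, so it is $n$-truncated by (ii).

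\emph{Step 2: (i)$\Leftrightarrow$(iii).} By Corollary \ref{middle_eq}, or directly by Proposition \ref{equiv_bc}, the naturality square with vertices $E$, $\Lmot(E)$, $B$, $\Lmot(B)$ is Cartesian. Its horizontal maps are $\A^1$-equivalences, and both $p$ and $\Lmot(p)$ are $\A^1$-fibrations; the latter holds because both ends are $\A^1$-local, using 1st-of-3 for $\mathcal{R}^{2/3}$. Since truncated maps are stable under base change, (iii)$\Rightarrow$(i).

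For (i)$\Rightarrow$(iii) I would use Step 1. Pulling back along $B\to\Lmot(B)$ gives an equivalence $\Cov[\infty]_{\Ao}(\Lmot(B))\simeq\Cov[\infty]_{\Ao}(B)$, the $n=\infty$ case of Corollary \ref{middle_eq}, and this equivalence sends $\Lmot(p)$ to $p$. By (i)$\Rightarrow$(ii), $p$ is $n$-truncated in $\Cov[\infty]_{\Ao}(B)$. Hence $\Lmot(p)$ is $n$-truncated in $\Cov[\infty]_{\Ao}(\Lmot(B))$, because truncatedness of objects is invariant under equivalences of $\infty$-categories. Applying (ii)$\Rightarrow$(i) over the base $\Lmot(B)$ then gives (iii).

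\emph{Main obstacle.} The delicate point is the reflectivity claim in Step 1, namely that the relevant mapping spaces agree. It rests on $(\mathcal{L}^{3/2},\mathcal{R}^{2/3})$ being a factorization system, so that $\Map_{/B}(-,p)$ inverts $\A^1$-equivalences over $B$, which is Corollary \ref{ortho_to_local}. I should also confirm that Corollary \ref{middle_eq} holds for $n=\infty$; the proof of the preceding corollary treats $n=\infty$ without change, so it does.
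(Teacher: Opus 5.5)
Your proposal is correct and follows essentially the paper's route. You get (i)$\Leftrightarrow$(ii) from the reflectivity of $\Cov[\infty]_{\Ao}(B)$ in the slice, where the paper instead says the inclusion is a fully faithful right adjoint, hence preserves iterated diagonals and is conservative, and you argue directly with mapping spaces; the passage to $\Lmot(p)$ by transport along the $n=\infty$ case of Corollary \ref{middle_eq} is exactly the paper's argument. Your extra base-change argument for (iii)$\Rightarrow$(i) is valid but not needed, and $\bar q$ should denote the $\A^1$-fibration part of the factorization of $q$ (its reflection), not the $\A^1$-equivalence.
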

\begin{proof}
    For the equivalence of the first two statements, note that the inclusion 
    $$\Cov[\infty]_{\Ao}(B) \subset \Shv(\Sm_S)_{/B}$$
    is a fully faithful right adjoint, by the discussion after \cite[\S 1.3]{soa}. In particular, it commutes with the formation of iterated diagonals and it is conservative. Therefore, $p$ is $n$-truncated as an object in $\Cov[\infty]_{\Ao}(B)$ if and only if it is an $n$-truncated map in $\Shv(\Sm_S)$.

    Now we deal with the equivalence of the second and third statement. By our arguments from above, we have that $\Lmot(p)$ is an $n$-truncated map in $\Shv(\Sm_S)$ if and only if it is an $n$-truncated object in $\Cov[\infty]_{\Ao}(\Lmot(B))$. As $p$ is the image of $\Lmot(p)$ under the equivalence 
    $$\Cov[\infty]_{\Ao}(B) \simeq \Cov[\infty]_{\Ao}(\Lmot(B))$$
    from Corollary \ref{middle_eq}, this shows the claim.
\end{proof}
As a preparation for the Galois-theoretic classification, we describe higher $\A^1$-coverings over an $\A^1$-invariant base, including a criterion in terms of the pullback along a section.
\begin{lem}\label{check_on_fib}
    Let $0 \leq n \leq \infty$ and let $p\colon E \to B$ be a map of Nisnevich sheaves where $B$ is $\A^1$-invariant. Then the following are equivalent:
    \begin{enumerate}
        \item[(i)] The map $p$ is an $\A^1$-$n$-covering.
        \item[(ii)] The map $p$ is $n$-truncated and $E$ is $\A^1$-invariant.
    \end{enumerate}
    Suppose furthermore that there exists a section $s$ of the canonical map $B \to B_{\leq 0}$ and that $B_{\leq 0}$ is $\A^1$-invariant. Moreover, let $p'\colon E'\to B_{\leq 0}$ be the pullback of $p$ along $s$. Then the above two statements are also equivalent to:
    \begin{enumerate}
        \item[(iii)] The sheaf $E'$ is $n$-truncated and $\A^1$-invariant.
    \end{enumerate}
\end{lem}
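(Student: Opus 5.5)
We first prove (i)$\Leftrightarrow$(ii), and then (ii)$\Leftrightarrow$(iii).

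\emph{(i)$\Leftrightarrow$(ii).} Since $B$ is $\A^1$-invariant, we have $\Lmot(B) \simeq B$. An $\A^1$-fibration over an $\A^1$-invariant base has $\A^1$-invariant source: the composite $E \to B \to *$ is right orthogonal to $\A^1$-equivalences, because $\A^1$-fibrations (the right class of the reflective factorization system of Remark \ref{a1_mod_recover}) are closed under composition and $B \to *$ lies in that class. Right orthogonality of $E \to *$ to $\A^1$-equivalences is precisely $\A^1$-locality, i.e.\ $\A^1$-invariance of the sheaf $E$. Conversely, if $E$ and $B$ are both $\A^1$-invariant, then $p$ is an $\A^1$-fibration by the cancellation property of right classes: if $q \circ p$ and $q$ are right orthogonal to a class, so is $p$. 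In this situation $\Lmot(p) \simeq p$, so condition (ii) of Definition \ref{higher_a1_cov_def} reads exactly ``$p$ is $n$-truncated''. This gives the equivalence. Alternatively, Proposition \ref{trunc_before_after} handles the truncation part directly.

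\emph{(ii)$\Leftrightarrow$(iii).} The map $B \to B_{\leq 0}$ is $0$-connected and admits the section $s$, so Remark \ref{detect_trunc} shows that $p$ is $n$-truncated if and only if $p'$ is. Since $B_{\leq 0}$ is $0$-truncated, $p'$ being $n$-truncated is equivalent to $E'$ being $n$-truncated for $n \geq 0$: composing with $B_{\leq 0}\to *$, which is $0$-truncated, gives one direction, and the converse follows from the standard fact that a map between $n$-truncated objects is $n$-truncated. This handles the truncation halves of (ii) and (iii).

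It remains to match the $\A^1$-invariance conditions: we must show that, given $p$ is $n$-truncated, $E$ is $\A^1$-invariant if and only if $E'$ is. The direction (ii)$\Rightarrow$(iii) is easy. The map $s$ is a morphism between the $\A^1$-invariant objects $B_{\leq 0}$ and $B$, so the pullback $E' = B_{\leq 0}\times_B E$ is a limit of $\A^1$-invariant sheaves, and $\A^1$-invariant sheaves are closed under limits. For (iii)$\Rightarrow$(ii), $E'\to B_{\leq 0}$ is then a map of $\A^1$-invariant sheaves, hence an $\A^1$-fibration. We must show that $p$ is an $\A^1$-fibration, equivalently that $E$ is $\A^1$-invariant. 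This is the main obstacle, because $\A^1$-invariance does not in general descend along effective epimorphisms.

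To overcome it, we would use the Galois description of Theorem \ref{class_cov_nconn}. Write $E \simeq E' /\!/ \Omega_s B$, i.e.\ as the geometric realization of the bar construction with terms $(\Omega_s B)^{\times k} \times_{B_{\leq 0}} E'$ over $B_{\leq 0}$. Here $\Omega_s B = B_{\leq 0}\times_B B_{\leq 0}$ is $\A^1$-invariant, again as a limit of $\A^1$-invariant sheaves. Hence every term of the simplicial object is an $\A^1$-fibration over $B_{\leq 0}$. To show that the colimit is an $\A^1$-fibration, we would check the Cartesianness of the square formed by $E'\to E$ over $s$, and apply Proposition \ref{equiv_bc} in the following form. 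Factor $p$ as $E \to \tilde E \to B$, an $\A^1$-equivalence followed by an $\A^1$-fibration. The induced square consisting of $E \to \tilde E$ and $E' \to \tilde E \times_B B_{\leq 0}$ has, by Proposition \ref{equiv_bc}, the base change of $E\to\tilde E$ along $s$ (a map in $\mathcal{R}^{2/3}$) lying in $\mathcal{L}^{3/2}$. This base change is a map $E' \to \tilde E'$ between objects fibered over $B_{\leq 0}$ via $\A^1$-fibrations, so it also lies in $\mathcal{R}^{2/3}$, and is therefore an equivalence. Since $s$ is an effective epimorphism and equivalences descend along effective epimorphisms in an $\infty$-topos, $E \to \tilde E$ is an equivalence. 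Hence $p$ is an $\A^1$-fibration, and $E$ is $\A^1$-invariant.
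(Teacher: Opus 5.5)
Your proof is correct. For (i)$\Leftrightarrow$(ii) and for the truncation half of (ii)$\Leftrightarrow$(iii) it is the paper's argument: closure of $\A^1$-fibrations under composition and 1st-of-3, $\Lmot(p)\simeq p$, and Remark \ref{detect_trunc}.

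The genuine difference is in how $\A^1$-invariance descends from $E'$ to $E$.

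\begin{itemize}
\item \emph{The paper's route.} It switches to the right class $\mathcal{R}_{\A^1}$ of the $\A^1$-\emph{modality} rather than the class of $\A^1$-fibrations. Over the $\A^1$-invariant bases $B$ and $B_{\leq 0}$, $\A^1$-invariance of $E$ (resp.\ $E'$) is equivalent to $p\in\mathcal{R}_{\A^1}$ (resp.\ $p'\in\mathcal{R}_{\A^1}$). The right class of a modality is local, i.e.\ it descends along the effective epimorphism $s$ \cite[Proposition 3.6.5]{abfj_G}. This yields both directions at once.
\item \emph{Your route.} You stay inside the reflective factorization system $(\mathcal{L}^{3/2},\mathcal{R}^{2/3})$. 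Since $s$ is a map of $\A^1$-invariant sheaves, it lies in $\mathcal{R}^{2/3}$. Proposition \ref{equiv_bc} then makes the base change $E'\to\tilde E'$ of the first factor $E\to\tilde E$ an $\A^1$-equivalence between $\A^1$-invariant sheaves, hence an equivalence. Conservativity of base change along the effective epimorphism $\tilde E'\to\tilde E$ finishes the argument.
\end{itemize}

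Comparing the two: your argument is self-contained within the paper, using only Proposition \ref{equiv_bc} and descent of equivalences, but it needs $s\in\mathcal{R}^{2/3}$. The paper's argument is shorter and more conceptual, and its descent step works along an arbitrary effective epimorphism. The $\A^1$-invariance of $B$ and $B_{\leq 0}$ is used there only to translate between membership in $\mathcal{R}_{\A^1}$ and $\A^1$-invariance of the source.

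Finally, the paragraph writing $E\simeq E'/\!/\Omega_s B$ as a bar construction plays no role in your final argument. On its own it would not suffice, since colimits of $\A^1$-fibrations need not be $\A^1$-fibrations, so you can simply delete it.
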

\begin{proof}
    The equivalence of the first two statements is clear if $B \simeq *$. Composing with the map to the point $*$, the general case follows from the closure of $\A^1$-fibrations under 1st-of-3 and from Proposition \ref{trunc_before_after}.
    
    Given the additional assumptions, we now take care of the equivalence of the second and third statement. For this note that by Remark \ref{detect_trunc}, $p$ is $n$-truncated if and only if $p'$ is $n$-truncated, and the latter holds if and only if $E'$ is $n$-truncated, since we can compose with the map to the point as above and $n$-truncated maps are closed under 1st-of-3. Furthermore, the sheaf $E$ is $\A^1$-invariant if and only if $p \in \mathcal{R}_{\A^1}$, where the latter is  the right class of the $\A^1$-modality from Definition \ref{a1_mod}. Since $s$ is an effective epimorphism, \cite[Proposition 3.6.5]{abfj_G} tells us that this happens if and only if $p' \in \mathcal{R}_{\A^1}$. Since $B_{\leq 0}$ is $\A^1$-invariant, this holds if and only if $E'$ is $\A^1$-invariant, where we use 1st-of-3 as above. Hence we are done.
\end{proof}
Combining these preliminary results with Theorem \ref{class_cov_nconn}, we can now prove our next main result.
\begin{thm}\label{classification_of_n_coverings}
    Let $0 \leq n \leq \infty$ and let $B$ be a Nisnevich sheaf. Suppose that there exists a section $s$ of the canonical map $\Lmot(B) \to \underline{\pi}_0^{\A^1}(B)$ and that $\underline{\pi}_0^{\A^1}(B)$ is $\A^1$-invariant. Then applying the functor $\Lmot$ pointwise and taking pullbacks along $s$ afterwards induces an equivalence of $(n+1)$-categories
    \[\Cov[n]_{\Ao}(B) \xrightarrow{\sim} \Mod_{(\Omega_s \Lmot(B))_{\leq n}}\!\left(\left(\Sh_{\A^1}(\Sm_S)_{\leq n}\right)_{/\underline{\pi}_0^{\A^1}(B)}\right),\]
    where $\Omega_s$ denotes the relative loop space in the Nisnevich $\infty$-topos.
\end{thm}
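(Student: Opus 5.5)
We reduce to the $\A^1$-invariant base $\Lmot(B)$ via Corollary \ref{middle_eq} and then restrict the topos-theoretic classification of Theorem \ref{class_cov_nconn} to $\A^1$-invariant objects using Lemma \ref{check_on_fib}.

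\emph{Step 1.} By Corollary \ref{middle_eq}, applying $\Lmot$ objectwise gives an equivalence $\Cov[n]_{\Ao}(B) \simeq \Cov[n]_{\Ao}(\Lmot(B))$. We may therefore replace $B$ by $B' := \Lmot(B)$, which is $\A^1$-invariant.

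\emph{Step 2.} Identify $B'_{\leq 0}$, the Nisnevich $0$-truncation, with $\underline{\pi}_0^{\A^1}(B)$. By definition $\underline{\pi}_0^{\A^1}(B)$ is the Nisnevich sheafification of $\pi_0$ of $\Lmot(B)$, so $B'_{\leq 0} \simeq \underline{\pi}_0^{\A^1}(B)$. The canonical map $\Lmot(B) \to \underline{\pi}_0^{\A^1}(B)$ is then the truncation map, and $s$ is a section of $B' \to B'_{\leq 0}$. The hypothesis guarantees that $B'_{\leq 0}$ is $\A^1$-invariant.

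\emph{Step 3.} Apply Theorem \ref{class_cov_nconn} in $\T = \Shv(\Sm_S)$. Taking pullback along $s$ gives an equivalence
\[\Cov[n](B') \xrightarrow{\sim} \Mod_{(\Omega_s B')_{\leq n}}\bigl((\T_{\leq n})_{/B'_{\leq 0}}\bigr).\]
Since $B'$ is $\A^1$-invariant, Lemma \ref{check_on_fib} (i)$\Leftrightarrow$(ii) shows that $\Cov[n]_{\Ao}(B')$ is the full subcategory of $\Cov[n](B')$ on those $E \to B'$ with $E$ $\A^1$-invariant. By (i)$\Leftrightarrow$(iii), these are exactly the coverings whose pullback $E'$ along $s$ is $n$-truncated and $\A^1$-invariant. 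Such an $E'$ is precisely an object of $(\Sh_{\A^1}(\Sm_S)_{\leq n})_{/\underline{\pi}_0^{\A^1}(B)}$.

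\emph{Step 4.} Restrict the equivalence of Step 3 to full subcategories. The target becomes the full subcategory of modules whose underlying object lies in the $\A^1$-invariant part, and this is the module category in the statement. For that identification we need that $(\Sh_{\A^1}(\Sm_S)_{\leq n})_{/\underline{\pi}_0^{\A^1}(B)}$ is a full subcategory of $(\T_{\leq n})_{/B'_{\leq 0}}$ that is stable under the action of $(\Omega_s B')_{\leq n}$.

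The action is given by the fiber product over $B'_{\leq 0}$, namely $(\Omega_s B')_{\leq n} \times_{B'_{\leq 0}} M \to M$. For the restricted module category to make sense, the product over $B'_{\leq 0}$ of $(\Omega_s B')_{\leq n}$ with an $\A^1$-invariant object should again be $\A^1$-invariant. I would argue this as follows.
\begin{itemize}
\item $\Omega_s B' = B'_{\leq 0} \times_{B'} B'_{\leq 0}$ is $\A^1$-invariant, since $\A^1$-invariant sheaves are closed under limits.
\item Its $n$-truncation is $\A^1$-invariant. By Proposition \ref{trunc_before_after}, or directly by Lemma \ref{check_on_fib} applied to the $n$-truncation of the covering $s\colon B'_{\leq 0} \to B'$, this truncation is again an $\A^1$-$n$-covering of $B'$, so its pullback along $s$ is $\A^1$-invariant by (iii).
\item Fiber products of $\A^1$-invariant objects are $\A^1$-invariant.
\end{itemize}
Alternatively, one can avoid this check altogether. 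Modules over a group object in a full subcategory closed under finite limits form the full subcategory of modules with underlying object in that subcategory, so the restriction is a formal statement about full subcategories.

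\emph{Main obstacle.} The delicate point is the $\A^1$-invariance of $(\Omega_s \Lmot(B))_{\leq n}$. Nisnevich truncation does not preserve $\A^1$-invariance in general, and I would handle this via the covering argument above, namely that the $n$-truncated factor of $s$ is an $\A^1$-fibration. Finally, composing the equivalences of Steps 1 and 4 gives exactly the functor ``apply $\Lmot$, then pull back along $s$'', which proves the theorem.
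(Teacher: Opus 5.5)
Steps 1--3, together with the restriction you carry out in Step 4, are exactly the paper's proof. You reduce to $B' = \Lmot(B)$ via Corollary \ref{middle_eq}, apply Theorem \ref{class_cov_nconn} in $\Sh(\Sm_S)$, and use Lemma \ref{check_on_fib}(iii) to see that pulling back along $s$ identifies $\Cov[n]_{\Ao}(B')$ with the modules whose underlying object is $\A^1$-invariant.

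The problem is the extra claim you call the main obstacle, that $(\Omega_s B')_{\leq n}$ is $\A^1$-invariant. Your argument for it is circular. Let $P \to B'$ be the $n$-truncated factor of $s$ in $\Sh(\Sm_S)_{/B'}$. Two facts pin down its pullback along $s$:
\begin{itemize}
\item the ($n$-connected, $n$-truncated) factorization is stable under base change;
\item $B'_{\leq 0}$ is $0$-truncated, so relative and absolute $n$-truncation over it agree.
\end{itemize}
Hence the pullback of $P$ along $s$ is precisely $(\Omega_s B')_{\leq n}$. By Lemma \ref{check_on_fib}, the statement ``$P \to B'$ is an $\A^1$-$n$-covering'' is therefore \emph{equivalent} to the conclusion you want to draw from it.

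Proposition \ref{trunc_before_after} does not break this circle. It compares truncatedness conditions for a map already known to be an $\A^1$-fibration, so it shows that the inclusion of $\Cov[\infty]_{\Ao}(B')$ preserves $n$-truncated objects. It does not say that the Nisnevich $n$-truncation of an $\A^1$-fibration is again an $\A^1$-fibration.

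Nor should a formal proof be expected. Take $B$ pointed and $\A^1$-connected, so that $\underline{\pi}_0^{\A^1}(B) = *$ and $s$ is the base point, and take $n = 0$. Then the claim says that $\underline{\pi}_1^{\A^1}(B) = \pi_0(\Omega \Lmot(B))$ is $\A^1$-invariant. Over a perfect field this is part of Morel's deep strong $\A^1$-invariance theorem, and it is not available over an arbitrary base $S$.

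The claim is not needed, however. As in the paper's proof, read the target as the full subcategory of $(\Omega_s B')_{\leq n}$-modules in $(\Sh(\Sm_S)_{\leq n})_{/B'_{\leq 0}}$ whose underlying object is $\A^1$-invariant. The action maps and the bar construction live in the ambient slice, so nothing requires the $\A^1$-invariant objects to be closed under the action. Restricting the equivalence of Theorem \ref{class_cov_nconn} to these full subcategories is then purely formal.

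Your ``alternatively'' remark points in this direction, but as phrased it still presupposes that the group object lies in the subcategory. Drop the bullet-point argument and the ``main obstacle'' paragraph, and what remains is a complete proof.
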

\begin{proof}
    By Corollary \ref{middle_eq}, we may assume that $B$ is equivalent to $\Lmot(B)$. Now, Theorem \ref{class_cov_nconn} applied to $\Shv(\Sm_S)$ yields an equivalence between $\Cov[n](B)$ and $\Mod_{(\Omega_s B)_{\leq n}}\!\left(\left(\Sh(\Sm_S)_{\leq n}\right)_{/B_{\leq 0}}\right)$ by pulling back along $s$. Finally, the third point of Lemma \ref{check_on_fib} tells us that this functor identifies the subcategory $\Cov[n]_{\Ao}(B) \subset \Cov[n](B)$ with modules in the target category whose underlying object is $\A^1$-invariant.
\end{proof}

We conclude this section with a variety of examples of higher $\A^1$-coverings.
\begin{ex}
    Let $X \in \Sm_S$ be an $\A^1$-rigid scheme, so that the canonical map $X \to \Lmot(X)$ is an equivalence. Then a map of smooth schemes $f \colon Y \to X$ is an $\A^1$-$0$-covering if and only if $Y$ is also $\A^1$-rigid. Indeed, the criterion of \cite[Proposition 3.1]{macpherson} tells us that $f$ is an $\A^1$-fibration if and only if the square $f \to \Lmot(f)$ is Cartesian. Since the map $X \to \Lmot(X)$ is an equivalence, this can happen if and only if $Y \to \Lmot(Y)$ is also an equivalence.
    
    As a special case of this example we obtain that the inclusion of any rational point into an $\A^1$-rigid scheme is an $\A^1$-$0$-covering.
\end{ex}
\begin{ex}
    Let $B$ be a pointed Nisnevich sheaf. Let $\Lmot(B)_{\leq n+1, \A^1}$ be the $(n+1)$-truncation of $\Lmot(B)$ in the category of $\A^1$-invariant sheaves. By Lemma \ref{check_on_fib} together with standard truncation estimates, the map $* \to \Lmot(B)_{\leq n+1, \A^1}$ is an $\A^1$-$n$-covering. Therefore, its pullback along the comparison map $B \to \Lmot(B) \to \Lmot(B)_{\leq n+1, \A^1}$ is an $\A^1$-$n$-covering of $B$. 
\end{ex}
\begin{ex}[{\cite[Example 6.5]{macpherson}}]
    Let $n \geq 2$ be an integer and let $G$ be a strictly $\A^1$-invariant Nisnevich sheaf of abelian groups. If the base scheme $S$ is a field, then any $n$-gerbe on a Nisnevich sheaf $B$ banded by $G$ is an $\A^1$-$(n-1)$-covering. The fiber of such a covering at any base point of $B$ is given by $\mathrm{B}^{n-1}G$.
\end{ex}
\begin{ex}
    Let $S$ be a base scheme for which $\mathrm{B}_{\et}\mathbb{G}_m$ is $\A^1$-invariant. Let $X \in \Sm_S$ and consider a line bundle $\mathcal{L}$ on $X$. For any $n > 1$, the $n$-th root stack of $\mathcal{L}$, constructed in \cite[Appendix B, \S B.1]{gw_dm_stacks} as the pullback of the map $(-)^n \colon \mathrm{B}_{\et}\mathbb{G}_m \to \mathrm{B}_{\et}\mathbb{G}_m$ along the classifying map $X \to \mathrm{B}_{\et}\mathbb{G}_m$ of $\mathcal{L}$, is an $\A^1$-$1$-covering of $X$. Since the Kummer sequence is exact in the \'{e}tale topology, the fiber over any base point is given by $\mathrm{B}_{\et}\mu_n$.
\end{ex}
\begin{ex}
    Let $B$ be an $\A^1$-connected $\mathbb{E}_k$-algebra object in Nisnevich sheaves for some integer $k \geq 1$. Then $(\Omega \Lmot(B))_{\leq n}$ is an $\mathbb{E}_{k+1}$-algebra object, which endows the category of modules over $(\Omega \Lmot(B))_{\leq n}$ with an $\mathbb{E}_k$-monoidal structure. By Theorem \ref{classification_of_n_coverings}, this hence also endows the category of $\A^1$-$n$-coverings of $B$ with an $\mathbb{E}_k$-monoidal structure. Translating the formula for the tensor product of modules along the equivalence from Theorem \ref{classification_of_n_coverings}, we can express the tensor product of coverings by means of a bar construction. This method of producing higher $\A^1$-coverings demands further investigation.
\end{ex}
Our final example is the $\A^1$-covering associated to the motivic Hurewicz map. We will study its relation to universal abelian $\A^1$-coverings and apply it to algebraic knots as studied in \cite{ww}, since this was at least partially the inspiration for this article. For this we work over a perfect base field $k$ of characteristic not $2$ and we fix this base scheme for the rest of this subsection.
\begin{ex}
    Let $B$ be a pointed $\A^1$-connected Nisnevich sheaf and let $h_B\colon \underline{\pi}_1^{\A^1}(B) \to \underline{H}_1^{\A^1}(B)$ be the motivic Hurewicz map. There exists then a map
    \begin{center}
    \begin{tikzcd}
    B \arrow[r] & \Lmot(B)_{\leq 1} \cong \mathrm{B}\underline{\pi}_1^{\A^1}(B) \arrow[r, "\mathrm{B}h_B"] & \mathrm{B}\underline{H}_1^{\A^1}(B).
    \end{tikzcd}
    \end{center}
    
    Pulling back the universal covering $* \to \mathrm{B}\underline{H}_1^{\A^1}(B)$ along the above composite, we obtain an $\A^1$-$0$-covering of $B$, which we call the Hurewicz covering of $B$ and we denote it by $C_{\mathscr{H}}(B)$.
\end{ex}
Note that by construction, the Hurewicz covering is functorial in the base $B$.
\begin{lem}\label{hur_cov_is_uni}
    Let $B$ be a pointed $\A^1$-connected Nisnevich sheaf. Then the following are equivalent:
    \begin{enumerate}
        \item[(i)] The Hurewicz covering of $B$ is $\A^1$-connected.
        \item[(ii)] The motivic Hurewicz map $h_B$ is an epimorphism.
    \end{enumerate}
    Furthermore, if any of the above equivalent conditions hold, the Hurewicz covering of $B$ is universal abelian, in the sense of \cite[Definition 4.4]{ww}.
\end{lem}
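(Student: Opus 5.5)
The plan is to reduce everything to the fiber sequence that defines the Hurewicz covering and to compute its $\underline{\pi}_0^{\A^1}$ via the long exact sequence of $\A^1$-homotopy sheaves. Write $C = C_{\mathscr{H}}(B)$ and $H = \underline{H}_1^{\A^1}(B)$. By construction there is a Cartesian square exhibiting $C \to B$ as the pullback of $* \to \mathrm{B}H$ along the composite $B \to \mathrm{B}\underline{\pi}_1^{\A^1}(B) \to \mathrm{B}H$. The map $* \to \mathrm{B}H$ is an $\A^1$-fibration, since $\mathrm{B}H$ is $\A^1$-invariant: $H$ is strictly $\A^1$-invariant over the perfect field $k$. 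Hence its pullback $C \to B$ is an $\A^1$-fibration. By the characterization of $\A^1$-fibrations as maps $f$ for which the square $f \to \Lmot(f)$ is Cartesian \cite[Proposition 3.1]{macpherson}, applying $\Lmot$ yields a fiber sequence
\[\Lmot(C) \to \Lmot(B) \to \mathrm{B}H.\]
Equivalently, $\Lmot(C)$ is the pullback of $* \to \mathrm{B}H$ along $\Lmot(B) \to \mathrm{B}H$.

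Since $\Lmot(B)$ and $\mathrm{B}H$ are both $0$-connected, the long exact sequence of homotopy sheaves in the Nisnevich $\infty$-topos ends with
\[\underline{\pi}_1^{\A^1}(B) \xrightarrow{h_B} H \to \underline{\pi}_0(\Lmot(C)) \to * .\]
Here the map on $\underline{\pi}_1$ is $h_B$, because $\Lmot(B) \to \mathrm{B}H$ factors through $\mathrm{B}\underline{\pi}_1^{\A^1}(B)$ via $\mathrm{B}h_B$. Exactness identifies $\underline{\pi}_0^{\A^1}(C) = \underline{\pi}_0(\Lmot(C))$ with the sheaf of cosets $H/h_B(\underline{\pi}_1^{\A^1}(B))$, that is, the orbits of the $\underline{\pi}_1^{\A^1}(B)$-action on $H$ through $h_B$. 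Note that $\underline{\pi}_0(\Lmot C) = \underline{\pi}_0^{\A^1}(C)$ because $\Lmot(C)$ is $\A^1$-invariant. This quotient sheaf is the point exactly when the image of $h_B$ is all of $H$, i.e. when $h_B$ is an epimorphism of Nisnevich sheaves. This proves the equivalence of (i) and (ii). The one point requiring care is that the Nisnevich sheaf quotient detects surjectivity; I would verify this stalkwise, using the fact that a map of sheaves of groups is an epimorphism if and only if it is so on stalks.

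For universality, suppose $h_B$ is epi. Then $C$ is $\A^1$-connected, and the same long exact sequence gives a short exact sequence
\[0 \to \underline{\pi}_1^{\A^1}(C) \to \underline{\pi}_1^{\A^1}(B) \to H \to 0.\]
So $\underline{\pi}_1^{\A^1}(C) = \ker h_B$. Since $h_B$ is the universal map to a strictly $\A^1$-invariant abelian sheaf (the motivic abelianization), the covering $C \to B$ has abelian deck group $H$.

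To verify \cite[Definition 4.4]{ww}, I would take any $\A^1$-connected pointed abelian $\A^1$-covering $E \to B$. By Theorem \ref{classification_of_n_coverings} with $n=0$, it corresponds to a transitive $\underline{\pi}_1^{\A^1}(B)$-set whose action factors through an abelian strictly $\A^1$-invariant quotient $A$. By the universal property of $h_B$ the quotient map factors through $H$, which gives a map of coverings $C \to E$ over $B$. Uniqueness follows from the discreteness of mapping spaces between $0$-coverings together with pointedness.

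The main obstacle is matching the precise formulation of ``universal abelian'' in \cite{ww} and checking that the relevant quotients of $\underline{\pi}_1^{\A^1}(B)$ are strictly $\A^1$-invariant, so that the universal property of $h_B$ (Morel's abelianization theorem over perfect $k$) applies. I would first try to invoke Morel's result that $\underline{H}_1^{\A^1}(B)$ is the initial strictly $\A^1$-invariant quotient of $\underline{\pi}_1^{\A^1}(B)$.
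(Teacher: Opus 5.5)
Your proposal is correct and follows the paper's proof. The paper likewise takes the fiber sequence $C_{\mathscr{H}}(B) \to B \to \mathrm{B}\underline{H}_1^{\A^1}(B)$ and reads both the equivalence and universality off the exactness of $0 \to \underline{\pi}_1^{\A^1}(C_{\mathscr{H}}(B)) \to \underline{\pi}_1^{\A^1}(B) \xrightarrow{h_B} \underline{H}_1^{\A^1}(B) \to \underline{\pi}_0^{\A^1}(C_{\mathscr{H}}(B)) \to 0$; your $\A^1$-fibration argument via \cite[Proposition 3.1]{macpherson} usefully makes explicit why this sequence survives $\Lmot$, which the paper leaves implicit, and since the paper uses only this short exact sequence for the universal abelian claim, your extra universal-property verification through Theorem~\ref{classification_of_n_coverings} is not needed.
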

Before we get to the proof, let us note that point (ii) is not known to hold unconditionally on $B$; see Remark 4.1 of \cite{ww} for a more thorough review. In fact, it is only known for very few examples and over suitable base fields, such as the ones from Appendix B of \cite{ww}.
\begin{proof}
    We consider the fiber sequence
    \begin{center}
    \begin{tikzcd}
    C_{\mathscr{H}}(B) \arrow[r] & B \arrow[r] & \mathrm{B}\underline{H}_1^{\A^1}(B)
    \end{tikzcd}
    \end{center}
    and the long exact sequence of homotopy sheaves
    \begin{center}
    \begin{tikzcd}
    \dotsc \arrow[r] & 0 \arrow[r] & \underline{\pi}_1^{\A^1}(C_{\mathscr{H}}(B)) \arrow[r] & \underline{\pi}_1^{\A^1}(B) \arrow[r, "h_B"] & \underline{H}_1^{\A^1}(B) \arrow[r] & \underline{\pi}_0^{\A^1}(C_{\mathscr{H}}(B)) \arrow[r] & 0 \arrow[r] & 0
    \end{tikzcd}
    \end{center}
    associated with it. Then the claim follows directly from the exactness.
\end{proof}
To apply the above to algebraic knots, we record the behavior of Hurewicz coverings under suitable maps.
\begin{prop}\label{h1_iso_gives_pb}
    Let $f \colon A \to B$ be a map of pointed $\A^1$-connected Nisnevich sheaves which induces an equivalence on $\underline{H}_1^{\A^1}$. Then the square of Nisnevich sheaves
    \[\begin{tikzcd}
    	{C_{\mathscr{H}}(A)} & {C_{\mathscr{H}}(B)} \\
    	A & B
    	\arrow[from=1-1, to=1-2]
    	\arrow[from=1-1, to=2-1]
    	\arrow[from=1-2, to=2-2]
    	\arrow[from=2-1, to=2-2]
    \end{tikzcd}\]
    is Cartesian.
\end{prop}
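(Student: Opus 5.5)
The plan is to use pasting of pullback squares together with the functoriality of the Hurewicz covering. By construction, $C_{\mathscr{H}}(B)$ is the pullback of $* \to \mathrm{B}\underline{H}_1^{\A^1}(B)$ along the composite
\[ c_B\colon B \to \Lmot(B)_{\leq 1} \cong \mathrm{B}\underline{\pi}_1^{\A^1}(B) \xrightarrow{\mathrm{B}h_B} \mathrm{B}\underline{H}_1^{\A^1}(B), \]
and similarly for $A$.

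First I verify that the square
\[\begin{tikzcd}
	A & B \\
	{\mathrm{B}\underline{H}_1^{\A^1}(A)} & {\mathrm{B}\underline{H}_1^{\A^1}(B)}
	\arrow["f", from=1-1, to=1-2]
	\arrow["{c_A}"', from=1-1, to=2-1]
	\arrow["{c_B}", from=1-2, to=2-2]
	\arrow["{\mathrm{B}\underline{H}_1^{\A^1}(f)}"', from=2-1, to=2-2]
\end{tikzcd}\]
commutes. This uses naturality of $B \to \Lmot(B) \to \Lmot(B)_{\leq 1}$ in $B$, the fact that $f$ is pointed, so it induces a map $\mathrm{B}\underline{\pi}_1^{\A^1}(A) \to \mathrm{B}\underline{\pi}_1^{\A^1}(B)$ compatible with the identification $\Lmot(-)_{\leq 1} \cong \mathrm{B}\underline{\pi}_1^{\A^1}(-)$ for pointed $\A^1$-connected objects, and naturality of the Hurewicz map, $h_B \circ \underline{\pi}_1^{\A^1}(f) = \underline{H}_1^{\A^1}(f) \circ h_A$. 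Applying $\mathrm{B}$, which is a functor from sheaves of groups to pointed connected objects, gives the commutativity. This is exactly the functoriality of $C_{\mathscr{H}}$ noted after the definition, and it also produces the top map of the square in the statement.

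Next I compare the two pullbacks. The bottom map $\mathrm{B}\underline{H}_1^{\A^1}(f)$ is an equivalence because $\underline{H}_1^{\A^1}(f)$ is an isomorphism by hypothesis. It is also pointed, so it carries the base point $* \to \mathrm{B}\underline{H}_1^{\A^1}(A)$ to the base point of $\mathrm{B}\underline{H}_1^{\A^1}(B)$. Hence the square with top edge $* \to *$, vertical maps the base points, and bottom edge $\mathrm{B}\underline{H}_1^{\A^1}(f)$ is Cartesian, since a square whose two horizontal maps are equivalences is a pullback.

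Now consider the stacked diagram formed by the square of the statement on top and this base-point square on the bottom. The vertical composite on the right is the defining pullback square of $C_{\mathscr{H}}(B)$, so $C_{\mathscr{H}}(B) \to B$ is the pullback of the base point along $c_B$. Pulling it back further along $f$ gives the pullback of the base point along $c_B \circ f \simeq \mathrm{B}\underline{H}_1^{\A^1}(f) \circ c_A$. By the Cartesian base-point square, this is the pullback of the base point of $\mathrm{B}\underline{H}_1^{\A^1}(A)$ along $c_A$, which is $C_{\mathscr{H}}(A)$. The pasting lemma then shows that the upper square is Cartesian, because both the outer rectangle and the lower square are.

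The main obstacle is the first step. There one has to check that the identification $\Lmot(-)_{\leq 1} \cong \mathrm{B}\underline{\pi}_1^{\A^1}(-)$ and the Hurewicz map are natural coherently enough that the pasting is genuinely compatible with the chosen map $C_{\mathscr{H}}(A) \to C_{\mathscr{H}}(B)$. Since everything lives in $1$-truncated pointed connected objects, this reduces to a $1$-categorical statement about sheaves of groups, and I expect it to be routine.
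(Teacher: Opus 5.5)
Your argument is correct and is essentially the paper's: the paper arranges these same data as a commutative cube whose lateral faces are the defining pullbacks of $C_{\mathscr{H}}(A)$ and $C_{\mathscr{H}}(B)$, and whose front face is the square with top edge $* = *$ over $\mathrm{B}\underline{H}_1^{\A^1}(f)$, Cartesian because that bottom map is an equivalence; pullback pasting then makes the back face, i.e.\ the square in the statement, Cartesian. Your chain $A \times_B C_{\mathscr{H}}(B) \simeq A \times_{\mathrm{B}\underline{H}_1^{\A^1}(B)} * \simeq C_{\mathscr{H}}(A)$ is exactly that pasting. Only the phrase ``stacked diagram'' is inaccurate: the statement square and the base-point square share no edge, and they are really opposite faces of this cube.
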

\begin{proof}
    This square is the back face of the commutative cube
    \[\begin{tikzcd}
    	{C_{\mathscr{H}}(A)} && {C_{\mathscr{H}}(B)} & \\
    	& {*} && {*} \\
    	A && B \\
    	& {\mathrm{B}\underline{H}_1^{\A^1}(A)} && {\mathrm{B}\underline{H}_1^{\A^1}(B)}
    	\arrow[from=1-1, to=1-3]
    	\arrow[from=1-1, to=2-2]
    	\arrow[from=1-1, to=3-1]
    	\arrow[from=1-3, to=2-4]
    	\arrow[from=1-3, to=3-3]
    	\arrow[equals, from=2-2, to=2-4]
    	\arrow[from=2-2, to=4-2]
    	\arrow[from=2-4, to=4-4]
    	\arrow[from=3-1, to=3-3]
    	\arrow[from=3-1, to=4-2]
    	\arrow[from=3-3, to=4-4]
    	\arrow["{\mathrm{B}\underline{H}_1^{\A^1}(f)}"'{pos=0.6}, "\sim"{pos=0.6}, from=4-2, to=4-4]
    \end{tikzcd}\]
    The front face is Cartesian by assumption and the lateral faces are Cartesian by definition. Therefore, so is the back face by pullback pasting.
\end{proof}
Using the above proposition, we can give a slightly modified proof of Theorem 4.8 of \cite{ww}.
\begin{cor}
    Let $X$ be the complement of a closed embedding $\A^1 \hookrightarrow \A^3$ and let $(f,g) \colon X \to \A^2\setminus \{0\}$ be defined as in \cite[Construction 4.3]{ww}. Then the $\underline{\mathrm{K}}^{\mathrm{MW}}_2$-covering $\widetilde{X} \to X$ constructed in loc.\ cit.\ is equivalent to the Hurewicz covering of $X$. In particular,  Lemma \ref{hur_cov_is_uni} yields that it is universal abelian if the Hurewicz map is an epimorphism.
\end{cor}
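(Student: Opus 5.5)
The idea is to show that both coverings are pulled back from the same universal covering along maps to $\mathrm{B}\underline{H}_1^{\A^1}$ that correspond under $(f,g)$. First, recall from \cite[Construction 4.3]{ww} that the $\underline{\mathrm{K}}^{\mathrm{MW}}_2$-covering $\widetilde{X} \to X$ is the pullback along $(f,g)\colon X \to \A^2\setminus\{0\}$ of an $\A^1$-covering of $\A^2 \setminus \{0\}$ classified by $\underline{\mathrm{K}}^{\mathrm{MW}}_2$. The base $\A^2\setminus\{0\}$ is $\A^1$-simply connected in the relevant sense: $\underline{\pi}_1^{\A^1}(\A^2\setminus\{0\}) \cong \underline{\mathrm{K}}^{\mathrm{MW}}_2$ is abelian, and the motivic Hurewicz map is an isomorphism there. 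Hence the covering $\widetilde{\A^2\setminus\{0\}}$ used in loc.\ cit.\ is the universal $\A^1$-covering. This is the pullback of $* \to \mathrm{B}\underline{\pi}_1^{\A^1} \simeq \mathrm{B}\underline{H}_1^{\A^1}$, so it is exactly the Hurewicz covering $C_{\mathscr{H}}(\A^2\setminus\{0\})$. I would verify this identification via Theorem \ref{classification_of_n_coverings}: both coverings correspond to the free transitive $\underline{\mathrm{K}}^{\mathrm{MW}}_2$-module.

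Second, I would check that $(f,g)$ induces an isomorphism on $\underline{H}_1^{\A^1}$. Here I would invoke the computation from \cite{ww}, where $X$ is shown to have $\underline{H}_1^{\A^1}(X) \cong \underline{\mathrm{K}}^{\mathrm{MW}}_2$ via $(f,g)$. This is an $\A^1$-Alexander-duality-type statement: the complement of $\A^1$ in $\A^3$ has the $\A^1$-homology of $\A^2\setminus\{0\}$ in low degrees. I expect this to be the main obstacle if it is not directly quotable. If needed, I would deduce it from the homology computation in loc.\ cit. Concretely, purity gives $\A^3/X \simeq \mathrm{Th}(N)$ with $N$ a trivial rank $2$ bundle over $\A^1$, so $\A^3/X \simeq \A^1_+ \wedge (\P^1)^{\wedge 2} \simeq (\P^1)^{\wedge 2}$. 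One then compares this with the cofiber sequence for $\A^2\setminus\{0\} \subset \A^2$ and checks that $(f,g)$ is compatible with these identifications.

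Both spaces are pointed and $\A^1$-connected. Proposition \ref{h1_iso_gives_pb} then gives a Cartesian square exhibiting $C_{\mathscr{H}}(X)$ as the pullback of $C_{\mathscr{H}}(\A^2\setminus\{0\})$ along $(f,g)$. Since $\widetilde{X}$ is defined as the same pullback of the same covering, the two are equivalent. The final assertion follows immediately from Lemma \ref{hur_cov_is_uni}.
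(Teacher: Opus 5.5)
Your proposal is correct and follows essentially the same route as the paper: identify the covering of $\A^2\setminus\{0\}$ from \cite[Construction 4.3]{ww} with its Hurewicz covering, use that $(f,g)$ is an $\underline{H}_1^{\A^1}$-isomorphism between pointed $\A^1$-connected sheaves, and conclude with Proposition \ref{h1_iso_gives_pb}. The paper obtains the first two inputs by quoting \cite[Proposition 4.6]{ww} and its proof, which also supplies the $\A^1$-connectedness of $X$ that you assert without justification, and the proof of \cite[Lemma 4.7]{ww}; your purity sketch and your check via Theorem \ref{classification_of_n_coverings} are therefore optional, and note that $\A^2\setminus\{0\}$ is not $\A^1$-simply connected, since only the fact that its Hurewicz map is an isomorphism is used.
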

\begin{proof}
    First note that $X$ is $\A^1$-connected by \cite[Proposition 4.6]{ww} and the proof of loc.\ cit.\ shows that the map $(f,g)\colon X \to \A^2\setminus \{0\}$ induces an equivalence on $\underline{H}_1^{\A^1}$. Moreover, the covering 
    $$\reallywidetilde{\A^2\setminus \{0\}} \to \A^2\setminus \{0\}$$ 
    constructed in \cite[Construction 4.3]{ww} is equivalent to the Hurewicz covering of $\A^2\setminus \{0\}$, as shown in the proof of \cite[Lemma 4.7]{ww}. Therefore, we conclude by using Proposition \ref{h1_iso_gives_pb}.
\end{proof}

\subsection{Counterexample to Morel's suggested geometric criterion for $\A^1$-coverings}
In Remark 7.3 of \cite{morel}, Morel asks whether a morphism of schemes over a field $k$ is an $\A^1$-covering if it is right orthogonal in $\Sm_k$ to the class of all zero sections $0_X \colon X \to \A^1 \times \, X$ where $X \in \Sm_k$. Fix the base field $\C$. The goal of this section is to construct a smooth scheme $T$ together with a morphism $p_1 \colon \Spec(\C) \to T$ which has the above right lifting property, but fails to be an $\A^1$-$0$-covering. In particular, it is a counterexample to Morel's suggested geometric criterion.

\hfill

The construction is based on Construction 4.3 of Balwe, Hogadi and Sawant \cite{BHS}, for which we recall the setup in their notation:

\begin{enumerate}
\item[-] $\lambda_1, \lambda_2 ,\lambda_3 \in \C \setminus \{ 0,1 \}$ are distinct complex numbers
\item[-] $f = (x - \lambda_1)(x - \lambda_2)(x - \lambda_3) \in \C[x]$ is the associated separable polynomial
\item[-] $\alpha_0 = \sqrt{f(0)} = \sqrt{-\lambda_1\lambda_2\lambda_3}$
\item[-] $E = V(y^2 - f) \hookrightarrow \A^2_{x,y}$ is an affine Weierstrass model of the elliptic curve $\overline{E}$ defined by the homogenization of $y^2-f \in \C[x,y]$
\item[-] $\pi \colon E \to \A^1_x$ is the projection onto the $x$-axis
\item[-] $S_1 = V(y^2-t^2f) \hookrightarrow \A^3_{x,y,t}$ is a family of affine plane curves parametrized by $t$ whose fiber over $t = 1$ is given by $E$
\item[-] $p_0 = (0,0,0)$, $p_1 = (0,\alpha_0, 1)$, $q = (1,0,0)$ are three rational points of $S_1$
\item[-] $S = S_1 \setminus \{ p_0 \} = S_1 \setminus \{ 0 \} \hookrightarrow \A^3_{x,y,t}$ (not to be confused with the base scheme from the other sections)
\item[-] $T_1$ is a smooth scheme with the following property. By Lemma 4.4 of \cite{BHS}, there exists a closed embedding $S_1 \hookrightarrow T_1$ into a smooth scheme such that for any field extension $\C \subset L$, if $H \colon \A^1_L \dashrightarrow (T_1)_L$ is a rational map whose image meets $(S_1)_L$, then $H$ factors through the embedding $(S_1)_L \hookrightarrow (T_1)_L$. Note that the construction of $T_1$ shows that it is also separated.
\item[-] $T = T_1 \setminus \{ p_0 \} = T_1 \setminus \{ 0 \}$, which is also separated as an open subscheme of $T_1$
\end{enumerate}
We consider the morphism $\Spec(\C) \to T$ given as the composite of the rational point $p_1 \in S$ and the embedding $S \hookrightarrow T$, which we also denote by $p_1$. Our first goal is to show that this morphism is right orthogonal to all zero sections $0_X \colon X \to \A^1 \times \, X$ where $X$ is a smooth scheme.
\begin{prop}\label{counterex 1}
For any commutative square of the form
\begin{center}
\begin{tikzcd}
X \arrow[r] \arrow[d, "0_X"] & \Spec(\C) \arrow[d, "p_1"] \\
\A^1 \times \,X \arrow[r, "H"] & T,
\end{tikzcd}
\end{center} 
the morphism $H \colon A^1 \times X \to T$ is constant with value $p_1$.
\end{prop}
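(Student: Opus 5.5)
The plan is to reduce to the generic point of $X$, use the rigidity property of $T_1$ from Lemma 4.4 of \cite{BHS} to force the homotopy into $S$, and then use that the elliptic curve $\overline{E}$ admits no non-constant rational curves. Since $X$ is smooth, it is a disjoint union of integral smooth schemes, so I would assume $X$ integral with generic point $\eta$ and function field $L=\kappa(\eta)$. Restricting $H$ to the generic fiber gives a morphism $H_\eta \colon \A^1_L \to T_L \subset (T_1)_L$. By commutativity of the square, it sends $0$ to the $L$-point induced by $p_1$, which lies in $(S_1)_L$. The defining property of $T_1$ (Lemma 4.4 of \cite{BHS}) then says that $H_\eta$ factors through $(S_1)_L$. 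Since it avoids $p_0$, it factors through $S_L$.

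Next I would base change further to an algebraic closure $\overline{L}$, which costs nothing since constancy can be checked there. A morphism $\A^1_{\overline{L}} \to S_{\overline{L}}$ is then a triple of polynomials $(x(s),y(s),t(s))$ satisfying $y^2=t^2f(x)$, with $(x(0),y(0),t(0))=(0,\alpha_0,1)$, and never passing through the origin.

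Because $t(0)=1$, the polynomial $t$ is not identically zero. Hence $s\mapsto (x(s),\,y(s)/t(s))$ defines a rational map $\A^1_{\overline{L}} \dashrightarrow E_{\overline{L}} \subset \overline{E}_{\overline{L}}$. This extends to a morphism $\P^1 \to \overline{E}$, which must be constant because $\overline{E}$ has genus $1$. Evaluating at $s=0$ gives $x\equiv 0$ and $y\equiv \alpha_0 t$.

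It remains to show that $t$ is constant. If $t$ had a root $s_0\in\overline{L}$, then $x(s_0)=0$, $y(s_0)=\alpha_0 t(s_0)=0$ and $t(s_0)=0$, so the curve would hit $p_0=(0,0,0)$, which is excluded from $S$. So $t$ is a nowhere vanishing polynomial over an algebraically closed field, hence constant, equal to $t(0)=1$. Therefore $H_{\overline{L}}$, and hence $H_\eta$, is constant with value $p_1$.

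Finally, I would spread this out. The maps $H$ and $\A^1\times X\to\Spec(\C)\xrightarrow{p_1}T$ agree on the generic fiber $\A^1_L$, which is dense in the reduced scheme $\A^1\times X$. Since $T$ is separated, the locus where they agree is closed, and since the source is reduced, the two maps coincide.

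The step I expect to need the most care is the application of Lemma 4.4 of \cite{BHS}. One has to check that its hypothesis, that the image meets $(S_1)_L$, is met over the non-closed field $L$ rather than only at closed points. Here this holds by the lifting square itself, since $H_\eta(0)=p_1$.
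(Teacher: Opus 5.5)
Your proposal is correct and follows essentially the same route as the paper: restrict to the generic fiber, use Lemma 4.4 of \cite{BHS} to land in $S_L$, and pass to $(x, y/t)$ to get a rational curve on the elliptic curve $\overline{E}$, which must be constant, so that $x = 0$ and $y = \alpha_0 t$. You then show $t = 1$ and spread out via separatedness of $T$ and reducedness of $\A^1 \times X$. The only cosmetic difference is how you get $t = 1$: you pass to $\overline{L}$ and note that a root of $t$ would hit $p_0$, whereas the paper observes that $H_\eta$ factors through the locus $\{x = 0,\ y = \alpha_0 t\} \setminus \{p_0\} \cong \G_m$, so $t$ is a unit in $L[r]$.
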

\begin{proof}
Let $X_i$, $i \in I$, denote the irreducible components of the smooth scheme $X$ with generic points $\eta_i \colon \Spec(L_i) = \Spec(\C(X_i)) \to X$. Base-changing $H$ along the $\eta_i$ we obtain
$$H_i \colon (\A^1_{r})_{L_i} \cong \A^1 \times \, X \times_X \Spec(L_i) \to T_{L_i}$$
with $H_i \circ 0_{L_i} = (p_1)_{L_i}$. 

As a first step, we want to show that $H_i$ is constant with value $p_1$ for any $i \in I$. By definition, the image of the composition of $H_i$ with the inclusion $T_{L_i} \hookrightarrow (T_1)_{L_i}$ meets $(S_1)_{L_i}$ at $(p_1)_{L_i}$. Hence it factors through the inclusion $(S_1)_{Li} \hookrightarrow (T_1)_{L_i}$ by definition of $T_1$. Writing 
$$x_i = H_i^*x,\,\, y_i = H_i^*y \,\, \text{and} \,\, t_i = H_i^*t$$ for the pullbacks of the three coordinate functions in $L_i[r]$, the defining equation of $(S_1)_{L_i}$ yields $y_i^2 = t_i^2f(x_i)$. Moreover, we have $$x_i(0) = 0, \,\, y_i(0) = \alpha_0 \,\, \text{and} \,\, t_i(0) = 1$$
by definition of $H_i$. Therefore $z_i = \frac{y_i}{t_i}$ is defined on the open dense subset $D(t_i) \subset \A^1_{L_i}$. The pair $(x_i,z_i)$ defines a morphism $D(t_i) \to E_{L_i}  = V(y^2 - f) \subset \A^2_{x,y}$ which we extend to a rational map $\varphi \colon \A^1_{L_i} \dashrightarrow E_{L_i}$. By \cite[\href{https://stacks.math.columbia.edu/tag/0BXZ}{Lemma 0BXZ}]{stacks-project}, this rational map extends to a morphism $\overline{\varphi} \colon \P^1_{L_i} \to \overline{E}_{L_i}$ after passing to smooth compactifications. Now we get that $\overline{\varphi}$ is constant by the fact that any morphism from the projective line to a separated group scheme must be constant; see, for instance, \cite[\href{https://stacks.math.columbia.edu/tag/0AEN}{Lemma 0AEN}]{stacks-project}. Since 
$$\overline{\varphi}(0) = \varphi(0) = (x_i(0),z_i(0)) = \biggl(x_i(0),\frac{y_i(0)}{t_i(0)}\biggr) = (0,\alpha_0),$$ the latter is the constant value of $\overline{\varphi}$ and hence also of $\varphi$. In particular we have $x_i = 0$ and $y_i = \alpha_0t_i$ in the function field $L_i(r)$. As those expressions are defined in the subring $L_i[r] \subset L_i(r)$, they already hold there. 

Now we want to observe that $t_i = 1$, which yields that $H_i$ is constant with value $p_1$. For this we consider the closed subscheme $A \subset S_1$ defined by $x = 0$ and $y = \alpha_0t$. Then $H_i \colon \A^1_{L_i} \to (S_1)_{L_i}$ factorizes through $A_{L_i}$ due to the above expressions and we have
$$\mathcal{O}_{A_{L_i}}(A_{L_i}) = L_i[x,y,t]/(y^2-t^2f,x,y-\alpha_0t) \cong L_i[t],$$ 
that is, $A_{L_i} \cong \A^1_{L_i}$ with coordinate $(0,\alpha_0t,t)$. As $H_i$ maps to $T_{L_i}$, we may consider the intersection $(S_1)_{L_i} \times_{(T_1)_{L_i}} T_{L_i} \cong S_{L_i}$ instead of $(S_1)_{L_i}$ and obtain a factorization of $H_i$ through 
$$A_{L_i} \setminus \{ p_0 \} \cong \A^1_{L_i} \setminus \{ 0 \} = (\G_m)_{L_i}.$$ 
Thus $t_i \in L_i[r]^{\times} = L_i^{\times}$ is constant. As $t_i(0) = 1$, we get $t_i = 1$ and hence $y_i = \alpha_0$. Therefore, $H_i$ is constant with value $p_1$.

It remains to observe that $H$ itself is constant with value $p_1$. For this we denote the constant morphism $\A^1 \times \,X \to T$ with value $p_1$ by $\const_{p_1}$ and consider the equalizer $\Eq = \Eq(H,\const_{p_1})$, which can be realized as the pullback $\A^1 \times \, X \times_{T \times T} T$ of $(H,\const_{p_1})$ and the diagonal $\Delta_T$. Since $T$ is separated, the diagonal $\Delta_T$ is a closed immersion, so also the pullback $\Eq \to \A^1 \times \, X$ is a closed immersion. Analogously, base-changing along the generic points $\eta_i$ yields closed immersions
$$\Eq(H_i,(\const_{p_1})_{L_i}) \cong \Eq \times_X \Spec(L_i) \hookrightarrow \A^1 \times \, X \times_X \Spec(L_i) \cong \A^1_{L_i.}$$ 
Since we proved that $H_i = (\const_{p_1})_{L_i}$, these equalizers agree with $\A^1_{L_i}$. In particular, $\Eq$ contains the generic points of all the irreducible components of $\A^1 \times \, X$ and must hence also have the same underlying topological space as $\A^1 \times \, X$. Now let $\mathcal{I} \subset \mathcal{O}_{\A^1 \times X}$ denote the ideal sheaf defining $\Eq$. From the comparison as topological spaces we know that $\sqrt{\mathcal{I}} = 0$, but since $\A^1 \times \, X$ is reduced, this implies $\mathcal{I} = 0$. Thus we have $\Eq = \A^1 \times \, X$ as schemes and hence $H = \const_{p_1}$ as claimed.
\end{proof}
\begin{cor}\label{counterex 2}
The morphism $p_1 \colon \Spec(\C) \to T$ is right orthogonal to the class of zero sections $0_X \colon X \to \A^1 \times \,X$ where $X$ is a smooth scheme.
\end{cor}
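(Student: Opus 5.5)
We will deduce the corollary directly from Proposition \ref{counterex 1}. Here right orthogonality is taken in the $1$-category $\Sm_\C$, or equivalently in sheaves of sets on it, which is the sense of Morel's question. So for each commutative square
\begin{center}
\begin{tikzcd}
X \arrow[r, "a"] \arrow[d, "0_X"] & \Spec(\C) \arrow[d, "p_1"] \\
\A^1 \times \,X \arrow[r, "H"] & T,
\end{tikzcd}
\end{center}
with $X$ a smooth scheme, we must show that there is exactly one diagonal filler $\A^1 \times X \to \Spec(\C)$ making both triangles commute.

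First, existence. Since $\Spec(\C)$ is terminal in $\Sm_\C$, there is exactly one morphism $c \colon \A^1 \times X \to \Spec(\C)$, namely the structure map. The upper triangle $c \circ 0_X = a$ commutes automatically, again because $\Spec(\C)$ is terminal. The lower triangle requires $p_1 \circ c = H$, which says that $H$ is the constant morphism with value $p_1$. This is precisely the conclusion of Proposition \ref{counterex 1}.

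Second, uniqueness. Any filler is a morphism into the terminal object, so it must equal $c$.

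There is no real obstacle here, since all the geometric content is already contained in Proposition \ref{counterex 1}. One should only remark on the case $X = \emptyset$. There the square is trivially commutative and $\A^1 \times X = \emptyset$, so both existence and uniqueness of the filler are immediate. The argument of Proposition \ref{counterex 1} also covers this case vacuously, since there are no irreducible components.

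If one wants orthogonality in the $\infty$-categorical sense, one can add a remark: all objects involved are schemes, so all mapping spaces in $\Sm_\C$ are discrete. The space of fillers is therefore the set computed above, which is a single point, and hence contractible.
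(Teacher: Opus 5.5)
Your proposal is correct and follows essentially the same route as the paper. The paper's proof likewise observes that, since $\Spec(\C)$ is terminal, the only candidate filler is the structure map, which works exactly when $H$ is constant with value $p_1$, and then invokes Proposition \ref{counterex 1}; your extra remarks on $X=\emptyset$ and on discreteness of mapping spaces are harmless additions.
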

\begin{proof}
Since $\Spec(\C)$ is terminal, a filler for a commutative square of the form
\begin{center}
\begin{tikzcd}
X \arrow[r] \arrow[d, "0_X"] & \Spec(\C) \arrow[d, "p_1"] \\
\A^1 \times \,X \arrow[r, "H"] & T,
\end{tikzcd}
\end{center}
can only be the canonical map, which is a filler if and only if $H$ is constant with value $p_1$. Therefore we are done by the previous proposition.
\end{proof}
Next we verify that $p_1 \colon \Spec(\C) \to T$ is not an $\A^1$-$0$-covering. For this we add some further items to the above list:
\begin{enumerate}
\item[-] $\overline{V}_1 = \A^1_x \setminus \{ 0 \} = \G_m$
\item[-] $\overline{V}_2 = (\pi^{-1}(\A^1_x \setminus \{ \lambda_1,\lambda_2,\lambda_3 \})) \setminus \{ (0,-\alpha_0) \}$ is a slightly modified version of $\overline{V}_2$ from \cite{BHS}: without removing the rational point $(0,-\alpha_0)$, two points live over $0$ so that the next bullet point would not be correct. This seems to be a little inaccuracy, but does not affect any of their or any of the following arguments
\item[-] $(\overline{V}_1 \hookrightarrow \A^1_x, \pi \colon \overline{V}_2 \to \A^1_x)$ is an elementary Nisnevich cover
\item[-] This elementary Nisnevich cover together with some additional data forms what Balwe, Hogadi and Sawant call an $\A^1$-ghost homotopy (Convention 4.2 of \cite{BHS}). For our purposes it is not really relevant to know what this is. The only relevant aspect is that it yields an equality
$$[L_{\mot}(p_1)] = [L_{\mot}(q)] \in \pi_0(\Map_{\Shv(\Sm_{\C})}(\Spec(\C),L_{\mot}(S))) = \pi_0^{\A^1}\!(S)(\C),$$
i.e., there is a path $\gamma$ in the above mapping space connecting $L_{\mot}(q)$ with $L_{\mot}(p_1)$. By composing with $L_{\mot}(S \hookrightarrow T)$ the same then holds for the mapping space with $S$ replaced by $T$.
\end{enumerate}
\begin{prop}\label{counterex 3}
The morphism $p_1 \colon \Spec(\C) \to T$ is not an $\A^1$-$0$-covering.
\end{prop}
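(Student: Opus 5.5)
We argue by contradiction: suppose $p_1\colon \Spec(\C)\to T$ is an $\A^1$-$0$-covering, in particular an $\A^1$-fibration. The plan is to exhibit a lifting problem against an $\A^1$-equivalence that has no solution, using the ghost homotopy. The cleanest route is Corollary \ref{middle_eq} together with the Cartesian criterion: since $\Spec(\C)$ is motivically contractible, the square with top row $\Spec(\C)\to \Lmot(\Spec(\C))\simeq *$, bottom row $T\to\Lmot(T)$ and vertical maps $p_1$ and $\Lmot(p_1)$ would be Cartesian by Proposition \ref{equiv_bc} and Remark \ref{a1_mod_recover}. Indeed, both horizontal maps are $\A^1$-equivalences, $p_1$ is an $\A^1$-fibration by assumption, and $\Lmot(p_1)$ is an $\A^1$-fibration because it is a map between $\A^1$-invariant sheaves (1st-of-3 for $\A^1$-fibrations, as used in Lemma \ref{check_on_fib}). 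Hence $\Spec(\C)\simeq T\times_{\Lmot(T)} *$, where $*\to\Lmot(T)$ is the point $\Lmot(p_1)$.

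Next we evaluate this pullback on $\C$-points, i.e.\ apply $\Map(\Spec(\C),-)$, which preserves pullbacks. This gives a fiber sequence of spaces
\[\Map(\Spec(\C),\Spec(\C))\simeq * \longrightarrow T(\C)\longrightarrow \Map(\Spec(\C),\Lmot(T)),\]
where the fiber is taken over $\Lmot(p_1)$ and $T(\C)$ is the discrete set of rational points, since $T$ is a scheme. The ghost homotopy gives a path $\gamma$ from $\Lmot(q)$ to $\Lmot(p_1)$ in $\Map(\Spec(\C),\Lmot(T))$. Consequently the point $q\in T(\C)$ together with $\gamma$ defines a point of the homotopy fiber over $\Lmot(p_1)$, and so does $p_1$ with the constant path. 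Because the fiber is contractible, these two points must be connected by a path in the fiber; projecting to the discrete set $T(\C)$ then forces $q=p_1$. But $q=(1,0,0)\neq(0,\alpha_0,1)=p_1$, which is a contradiction.

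The step I expect to be most delicate is justifying the Cartesian square, that is, checking that the hypotheses of Proposition \ref{equiv_bc} genuinely apply: we need $T\to\Lmot(T)$ and $\Spec(\C)\to *$ to lie in $\mathcal{L}^{3/2}$, and both vertical maps to lie in $\mathcal{R}^{2/3}$. Alternatively, one could phrase the whole argument as an explicit lifting problem against the $\A^1$-equivalence $\Spec(\C)\to \Lmot(\Spec(\C))$ sitting over $T\to\Lmot(T)$, but the pullback formulation packages this most efficiently. The ghost homotopy itself is taken as given from the list above, so beyond this check the argument reduces to evaluating a fiber on $\C$-points.
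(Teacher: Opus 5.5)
Your proof is correct and is essentially the paper's argument: assuming $p_1$ is an $\A^1$-$0$-covering, the naturality square from $p_1$ to $\Lmot(p_1)$ is Cartesian, and the ghost homotopy then makes $q$ (with the path $\gamma$) a second point of the contractible fiber over $\Lmot(p_1)$, forcing $q = p_1$. The differences are cosmetic: you get the Cartesian square from Proposition \ref{equiv_bc} and Remark \ref{a1_mod_recover} instead of \cite[Proposition 3.1]{macpherson}, and you phrase the factorization of $(p_1,q)$ through the pullback as a statement about the homotopy fiber on $\C$-points.
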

\begin{proof}
   Since the points $L_{\mot}(p_1)$ and $L_{\mot}(q)$ are homotopic, there exists a homotopy making the square 
   \[\begin{tikzcd}
   	{\Spec(\C) \copro \Spec(\C)} & {\Spec(\C)} \\
   	T & {\Lmot(T)}
   	\arrow[from=1-1, to=1-2]
   	\arrow["{(p_1,q)}"', from=1-1, to=2-1]
   	\arrow["{\Lmot(p_1)}", from=1-2, to=2-2]
   	\arrow[from=2-1, to=2-2]
   \end{tikzcd}\]
   of Nisnevich sheaves commutative. Now suppose that the morphism $p_1 \colon \Spec(\C) \to T$ were an $\A^1$-$0$-covering. Then \cite[Proposition 3.1]{macpherson} tells us that the commutative square of Nisnevich sheaves
   \[\begin{tikzcd}
   	{\Spec(\C)} & \Lmot(\Spec(\C)) \\
   	T & {\Lmot(T)}
   	\arrow[equals, from=1-1, to=1-2]
   	\arrow["{p_1}"', from=1-1, to=2-1]
   	\arrow["{\Lmot(p_1)}", from=1-2, to=2-2]
   	\arrow[from=2-1, to=2-2]
   \end{tikzcd}\]
   would have to be Cartesian. In particular, the morphism $(p_1, q) \colon \Spec(\C) \copro \Spec(\C) \to T$ would need to factor through $p_1 \colon \Spec(\C) \to T$, which is possible if and only if the points $p_1$ and $q$ coincide. This is a contradiction.
\end{proof}
While the above argument gives a rather intuitive understanding of the failure of $p_1$ to be an $\A^1$-$0$-covering, we have an alternative argument including an explicit $\A^1$-equivalence which the map $p_1 \colon \Spec(\C) \to T$ is not right orthogonal to. We thank Nidhi Gupta for suggesting to look for such an example.
\begin{rem}
    We set $\overline{V}_{12} = \overline{V}_{1} \times_{\A^1_x} \overline{V}_{2}$. For this example we need to use the $\A^1$-ghost homotopy between the points $p_1$ and $q$ from Construction 4.3 of \cite{BHS} a bit more explicitly: it defines a map from the colimit $C$ of the diagram of Nisnevich sheaves
    \begin{center}
    \begin{tikzcd}
   	\overline{V}_1 & \overline{V}_{12} \arrow[l] \arrow[r, "0"] & \A^1_{\overline{V}_{12}} & \overline{V}_{12} \arrow[l, swap, "1"] \arrow[r] & \overline{V}_{2}
    \end{tikzcd}
    \end{center}
    to $T$. The point $(0, \alpha_0) \colon \Spec(\C) \to \overline{V}_2$ then gives us a map $\Spec(\C) \to C$ by composition, whose further composite with the map $C \to T$ from above picks out the point $p_1 \in T$. Therefore, the square
    \[\begin{tikzcd}
	    \Spec(\C) & \Spec(\C) \\
	    C & T
	    \arrow[equals, from=1-1, to=1-2]
	    \arrow[from=1-1, to=2-1]
	    \arrow["{p_1}", from=1-2, to=2-2]
	    \arrow[from=2-1, to=2-2]
    \end{tikzcd}\]
    commutes and does not admit a diagonal filler, as the map $C \to T$ is not constant. 
    
    To conclude, it suffices to show that the map $\Spec(\C) \to C$ is an $\A^1$-equivalence. To that end, we first consider the map $C \to \A^1_{\C}$ given by mapping the defining diagram of $C$ to the diagram
    \begin{center}
    \begin{tikzcd}
   	\overline{V}_1 & \overline{V}_{12} \arrow[r, equals] \arrow[l] & \overline{V}_{12} & \overline{V}_{12} \arrow[l, equals] \arrow[r] & \overline{V}_{2}
    \end{tikzcd}
    \end{center}
    and then taking colimits. As a colimit of $\A^1$-equivalences, it is an $\A^1$-equivalence itself. By $2$-out-of-$3$ for $\A^1$-equivalences, the sequence of maps $\Spec(\C) \to C \to \A^1_{\C}$ thus yields the claim.
\end{rem}

Combining Corollary \ref{counterex 2} with Proposition \ref{counterex 3}, we obtain a counterexample to Morel's suggested geometric criterion for $\A^1$-$0$-coverings from Remark 7.3 of \cite{morel}:
\begin{thm}\label{counterex to Morel}
The morphism $p_1 \colon \Spec(\C) \to T$ is right orthogonal to the class of all zero sections $0_X \colon X \to \A^1 \times \, X$ where $X$ is a smooth $\C$-scheme, but it is not an $\A^1$-$0$-covering.
\end{thm}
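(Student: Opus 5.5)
The theorem is the conjunction of two results already established in this subsection, so the plan is simply to assemble them.

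For the first assertion, we invoke Corollary \ref{counterex 2}. That corollary reduces the lifting problem against a zero section $0_X \colon X \to \A^1 \times X$ to showing that every homotopy $H \colon \A^1 \times X \to T$ starting at $p_1$ is constant. This uses that $\Spec(\C)$ is terminal, so a filler is unique if it exists and exists exactly when $H$ is constant. Proposition \ref{counterex 1} shows that $H$ is constant in three steps:
\begin{enumerate}
\item[(a)] Reduce to the generic points of the irreducible components of $X$.
\item[(b)] Use the defining property of $T_1$ from Lemma 4.4 of \cite{BHS} to factor each $H_i$ through $S_1$, and hence through $S$.
\item[(c)] Show the $x$- and $y/t$-coordinates are constant by extending to a map $\P^1 \to \overline{E}$, which must be constant. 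The $t$-coordinate is then a unit on $\A^1$, hence constant.
\end{enumerate}
Finally, separatedness of $T$ and reducedness of $\A^1 \times X$ spread the generic equality $H_i = \const_{p_1}$ to all of $\A^1 \times X$.

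For the second assertion, we invoke Proposition \ref{counterex 3}. Suppose $p_1$ were an $\A^1$-$0$-covering. Then it is in particular an $\A^1$-fibration, so the criterion of \cite[Proposition 3.1]{macpherson} makes the naturality square comparing $p_1$ with $\Lmot(p_1)$ Cartesian. The $\A^1$-ghost homotopy of \cite{BHS} gives $[\Lmot(p_1)] = [\Lmot(q)]$ in $\pi_0^{\A^1}(T)(\C)$. Together with Cartesianness, this forces $(p_1, q)$ to factor through $p_1$, which is impossible since $p_1 \neq q$ as rational points of $T$.

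As a fallback, the remark following Proposition \ref{counterex 3} exhibits an explicit $\A^1$-equivalence $\Spec(\C) \to C$ against which $p_1$ has no lift. This gives an independent proof of the second assertion. Combining the two assertions yields the theorem.

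The substantive obstacle lies entirely in the earlier results, specifically the rigidity argument of Proposition \ref{counterex 1}: making sure every homotopy is trapped inside $S$ and is then constant. Given those results, the theorem requires no further work.
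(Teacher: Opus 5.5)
Your proposal is correct and follows the paper exactly: the theorem is proved there by combining Corollary \ref{counterex 2} (resting on the rigidity argument of Proposition \ref{counterex 1}) with Proposition \ref{counterex 3}. Your summaries of those steps also match the paper. These are the reduction to generic points, trapping $H_i$ in $S$, constancy via $\P^1 \to \overline{E}$ and the unit $t_i$, and the Cartesian-square contradiction coming from the $\A^1$-ghost homotopy.
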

 This result shows that right lifting against $\A^1$-equivalences and right lifting against zero sections behave differently with respect to algebraic homotopies and $\A^1$-ghost homotopies in general. In particular, we can also not expect Morel's geometric criterion to have a direct extension for higher $\A^1$-coverings.

\subsection{Morel's geometric criterion for $\A^1$-coverings of presheaves}
In this section we show that the Nisnevich topology is the real culprit of the failure from the previous section. For this we reformulate all the relevant definitions for presheaves instead of sheaves and then show that the geometric criterion from Remark 7.3 of \cite{morel} is correct in this setting.

\hfill

We start by recalling some classical facts about the $\A^1$-singular complex functor. In what follows, we fix an arbitrary base scheme $S$ and we denote the mapping presheaf in $\PShv(\Sm_S)$ by $[-,-]$.
\begin{de}
    Let $f\colon X \to Y$ be a map of Nisnevich presheaves. We call $f$ a pre-$\A^1$-equivalence if the map of spaces $\Map_{\PShv(\Sm_S)}(f, Z)$ is an equivalence for any $\A^1$-invariant presheaf $Z$. For $0 \leq n \leq \infty$, we call a map of presheaves $p \colon E \to B$ a pre-$\A^1$-$n$-covering if it is $n$-truncated and right orthogonal to all pre-$\A^1$-equivalences.
\end{de}
As in the setting of sheaves, we will refer to a pre-$\A^1$-$\infty$-covering as a pre-$\A^1$-fibration. 
\begin{con}[{\cite[p. 88]{mv}}]\label{lmot_def}
    Let us denote by $c \colon \A^1 \to *$ the constant map to the point and by $i_0, i_1\colon * \to \A^1$ the sections of $\A^1$ at $0$ and $1$. Define $\A^\bullet\colon \Delta \to \Sm_{\Z}$ to be the cosimplicial scheme whose $n$-th object is $\A^n$, whose coface maps are given by
    \begin{align*}
        \A^{n-1} \xrightarrow{\A^\bullet(\delta^n_k)} \A^n & = \begin{cases}
                i_0 \times \id^{\times n-1}, & k = 0 \\
                \id^{\times k-1} \times \, \mathrm{diag} \times \id^{\times n-k-1}, & 0 < k < n \\
                \id^{\times n-1} \times \, i_1, & k = n\end{cases}
    \end{align*}
    and whose codegeneracy maps are given by
    \begin{align*}
        \A^{n+1} \xrightarrow{\A^\bullet(\sigma^n_k)} \A^n & =  \id^{\times k} \times \, c \times \id^{\times n-k}.
    \end{align*}
    Let $\Sing^{\A^1} \colon \PShv(\Sm_S) \to \PShv(\Sm_S)$ be the functor sending an object $X$ to the geometric realization $\lvert [\A^\bullet \times \,S, X] \rvert$. The comparison maps from $\left[\A^0\times \, S, X\right] = X$ to the colimit assemble into a natural transformation $\id \to \Sing^{\A^1}$. By the same argument as in \cite[Theorem 4.25]{primer}, the functor $\Sing^{\A^1}$ is the Bousfield localization functor onto the subcategory of $\A^1$-invariant presheaves.
\end{con}
To analyze the lifting condition against zero sections, we first reformulate orthogonality in terms of mapping presheaves. The following is a direct application of Lemma 2.4.3 from \cite{abfj}.
\begin{lem}\label{lift_htpy}
    Let $f\colon X \to Y$ and $g\colon A \to B$ be maps of Nisnevich presheaves. Then the following are equivalent:
    \begin{enumerate}
        \item[(i)] The map $f$ is right orthogonal to $g \times T$ for all $T \in \Sm_S$.
        \item[(ii)] The square $g^* \colon [B, f] \to [A, f]$ is Cartesian in $\PShv(\Sm_S)$.
    \end{enumerate}
\end{lem}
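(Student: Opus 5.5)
The plan is to reduce the statement to an objectwise comparison of mapping spaces, using that limits in $\PShv(\Sm_S)$ are computed objectwise and that the mapping presheaf is the internal hom of the presheaf $\infty$-topos. Throughout I read the square in (ii) as
\[\begin{tikzcd}
	{[B,X]} & {[B,Y]} \\
	{[A,X]} & {[A,Y]}
	\arrow["{f_*}", from=1-1, to=1-2]
	\arrow["{g^*}"', from=1-1, to=2-1]
	\arrow["{g^*}", from=1-2, to=2-2]
	\arrow["{f_*}"', from=2-1, to=2-2]
\end{tikzcd}\]
and I identify each $T \in \Sm_S$ with its Yoneda image in $\PShv(\Sm_S)$, so that $g \times T$ denotes $g \times \id_T \colon A \times T \to B \times T$.

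First, since pullbacks in $\PShv(\Sm_S)$ are computed sectionwise, the square above is Cartesian if and only if its evaluation at every $T \in \Sm_S$ is a Cartesian square of spaces. Second, by the Yoneda lemma and the defining adjunction of the internal hom, $(-)\times T \dashv [T,-]$, there are equivalences
\[[B,X](T) \simeq \Map_{\PShv(\Sm_S)}(T,[B,X]) \simeq \Map_{\PShv(\Sm_S)}(B \times T, X).\]
These are natural in $B$ and $X$, so they identify the maps $f_*$ and $g^*$. Hence the square evaluated at $T$ becomes
\[\begin{tikzcd}
	{\Map(B\times T,X)} & {\Map(B\times T,Y)} \\
	{\Map(A\times T,X)} & {\Map(A\times T,Y)}
	\arrow[from=1-1, to=1-2]
	\arrow[from=1-1, to=2-1]
	\arrow[from=1-2, to=2-2]
	\arrow[from=2-1, to=2-2]
\end{tikzcd}\]
with all mapping spaces taken in $\PShv(\Sm_S)$.

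Third, right orthogonality of $f$ to a map $h \colon A' \to B'$ means by definition that the induced map from $\Map(B',X)$ to the pullback $\Map(A',X) \times_{\Map(A',Y)} \Map(B',Y)$ is an equivalence. Equivalently, the corresponding commutative square of mapping spaces is Cartesian; this is exactly the reformulation used in the proof of the proposition preceding Corollary \ref{ortho_to_local}. Applying this with $h = g \times T$ shows that the square at $T$ is Cartesian if and only if $f$ is right orthogonal to $g \times T$. Ranging over all $T \in \Sm_S$ gives (i) $\Leftrightarrow$ (ii).

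There is no real obstacle; the argument is a special case of \cite[Lemma 2.4.3]{abfj}. The only points needing care are that the identifications above are natural in both variables, so that the maps in the two squares really correspond, and that it suffices to test against representables $T$. The latter holds because they detect equivalences objectwise, even though (i) only quantifies over smooth schemes rather than all presheaves.
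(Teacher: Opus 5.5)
Your proof is correct and takes the same approach as the paper. The paper proves the lemma by citing \cite[Lemma 2.4.3]{abfj}, and your argument unpacks exactly that lemma in this case. You evaluate the square sectionwise, identify $[B,X](T) \simeq \Map(B \times T, X)$ naturally via Yoneda and the internal-hom adjunction, and then read orthogonality to $g \times T$ as the resulting square of mapping spaces being Cartesian.
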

This reformulation allows us to link the relevant lifting condition to the $\A^1$-singular complex.
\begin{prop}\label{a1_geom}
    Let $f\colon X \to Y$ be a map of Nisnevich presheaves which is right orthogonal to all zero sections $T \to \A^1\times \, T$ where $T \in \Sm_S$. Then the square $f \to \Sing^{\A^1}(f)$ is Cartesian in $\PShv(\Sm_S)$.
\end{prop}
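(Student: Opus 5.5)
Via Lemma \ref{lift_htpy}, right orthogonality of $f$ to all zero sections $0_T = 0 \times T\colon T \to \A^1 \times T$ is equivalent to the square
\[\begin{tikzcd}
	{[\A^1, X]} & X \\
	{[\A^1, Y]} & Y
	\arrow["{i_0^*}", from=1-1, to=1-2]
	\arrow[from=1-1, to=2-1]
	\arrow[from=1-2, to=2-2]
	\arrow["{i_0^*}"', from=2-1, to=2-2]
\end{tikzcd}\]
being Cartesian in $\PShv(\Sm_S)$. The plan is to bootstrap this to show that every simplicial level of the cosimplicial construction is Cartesian over level $0$, and then to pass to the geometric realization by a descent argument for simplicial objects in an $\infty$-topos.

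The first step is to show that for every $n$ the square formed by $[\A^n, X] \to X$ and $[\A^n, Y] \to Y$ is Cartesian, where the horizontal maps are evaluation at the origin, i.e. precomposition with $* \to \A^n$. I would do this by induction, writing the inclusion of the origin as a composite of zero sections: $\A^{n-1} \times \{0\} \hookrightarrow \A^n$ is $0_{\A^{n-1}}$ up to permuting factors. Applying Lemma \ref{lift_htpy} with $g = 0_{\A^{n-1}}$ viewed as $0 \times \A^{n-1}$, the hypothesis (orthogonality to $0_{T \times \A^{n-1}}$ for all $T$) gives that $[\A^n, f] \to [\A^{n-1}, f]$ is Cartesian. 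Pasting these squares then gives the claim. As a consequence, every map $[\A^m, f] \to [\A^n, f]$ induced by a map of simplices is Cartesian, since each $[\A^m,f]$ is Cartesian over $f$ and Cartesian squares satisfy 2-out-of-3 under pasting. Thus the simplicial object $[\A^\bullet \times S, f]$ in $\PShv(\Sm_S)^{\Delta^1}$ is a Cartesian natural transformation $[\A^\bullet, X] \to [\A^\bullet, Y]$.

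The second step uses descent in the $\infty$-topos $\PShv(\Sm_S)$: for a Cartesian transformation of simplicial objects $U_\bullet \to V_\bullet$, each square $U_n \to |U_\bullet|$ over $V_n \to |V_\bullet|$ is Cartesian (see HTT Theorem 6.1.3.9, in its form for colimits indexed by $\Delta^{\op}$). Applying this at $n = 0$ yields that the square from $X = [\A^0, X]$ to $|[\A^\bullet, X]| = \Sing^{\A^1}(X)$ over $Y \to \Sing^{\A^1}(Y)$ is Cartesian. It remains to identify these comparison maps with the unit $\id \to \Sing^{\A^1}$ from Construction \ref{lmot_def}, which holds by definition.

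The main obstacle is the bookkeeping in the first step. One has to check that Lemma \ref{lift_htpy} legitimately allows the parameter $T$ to absorb the extra affine factors $\A^{n-1}$ (it does, since $\A^{n-1} \times T \in \Sm_S$). One must also make sure that only the origin-evaluation maps need to be Cartesian, and not all coface maps directly. The coface maps involving $i_1$ and the diagonals are handled automatically by the 2-out-of-3 pasting argument, because every $[\A^n, f]$ is Cartesian over $f$ via the origin, and any map of simplices is compatible with some evaluation at a rational point. Strictly speaking the pasting requires a common target; I would route everything through the composite with evaluation at $0$, noting that each coface map followed by evaluation at the origin of $\A^{n-1}$ is evaluation at some point of $\A^n$. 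Handling evaluation at a general point such as $(1,\dots,1)$ then requires orthogonality to $1$-sections, which follows from orthogonality to zero sections via the automorphism $r \mapsto 1 - r$ of $\A^1$.
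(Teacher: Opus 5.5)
Your proposal is correct and follows essentially the same route as the paper. Descent in $\PShv(\Sm_S)$ reduces the claim to Cartesianness of the simplicial structure maps, and pullback pasting reduces this to evaluations at vertices, which are handled by iterating zero sections (and, via $r \mapsto 1-r$, one-sections) and applying Lemma \ref{lift_htpy}. Your explicit treatment of the non-origin vertices spells out a point that the paper's ``by induction on $j$'' leaves implicit.
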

\begin{proof}
Using descent for the $\infty$-topos $\PShv(\Sm_S)$, the claim follows if the square 
$$[\A^{j} \times S, f] \to [\A^{i} \times \, S, f]$$
is Cartesian for any map $[i] \to [j]$ in the simplex category $\Delta$. Moreover, since any object in $\Delta$ admits at least one map from $[0]$, pullback pasting allows us to reduce to the case $i = 0$. Let us hence deal with this case. By Construction \ref{lmot_def}, the relevant square is induced by a map $\iota \colon S \to \A^j \times \, S$. Since $f$ is right orthogonal to all zero sections, it is right orthogonal to $\iota \times T$ for $T \in \Sm_S$ by induction on $j$. Thus we can then apply Lemma \ref{lift_htpy} and we are done.
\end{proof}
\begin{cor}\label{morels_question}
    A map of Nisnevich presheaves $X \to Y$ (respectively a map $X \to Y$ between smooth schemes) is a pre-$\A^1$-fibration (respectively a pre-$\A^1$-$0$-covering) if and only if it is right orthogonal to zero sections $T \to \A^1 \times \, T$ where $T$ is a smooth scheme over $S$.
\end{cor}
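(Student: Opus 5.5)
The plan is to treat the two directions separately, with the forward direction being immediate and the backward direction reduced to Proposition \ref{a1_geom}.

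For the forward direction, suppose $f\colon X \to Y$ is a pre-$\A^1$-fibration. Each zero section $0_T\colon T \to \A^1 \times T$ is a pre-$\A^1$-equivalence: the projection $\A^1 \times T \to T$ is a retraction of $0_T$, and it induces an equivalence on mapping spaces into any $\A^1$-invariant presheaf $Z$. By $2$-out-of-$3$ on $\Map(-,Z)$, the map $0_T$ does as well. Hence $f$ is right orthogonal to all zero sections.

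For the converse, assume $f$ is right orthogonal to all zero sections. By Proposition \ref{a1_geom}, the square $f \to \Sing^{\A^1}(f)$ is Cartesian. Since $\Sing^{\A^1}$ is the Bousfield localization onto $\A^1$-invariant presheaves (Construction \ref{lmot_def}), the map $\Sing^{\A^1}(f)$ lies between local objects. Such a map is right orthogonal to every pre-$\A^1$-equivalence $w\colon A \to B$. Indeed, the square $\Map(w,\Sing^{\A^1}X) \to \Map(w,\Sing^{\A^1}Y)$ has both horizontal arrows equivalences, so it is Cartesian. Right orthogonality to a fixed class is stable under base change, so $f$, being a pullback of $\Sing^{\A^1}(f)$, is right orthogonal to all pre-$\A^1$-equivalences. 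That is, $f$ is a pre-$\A^1$-fibration.

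For the statement about smooth schemes, we must additionally see that a map $X \to Y$ of smooth schemes, viewed via the Yoneda embedding, is $0$-truncated. Representable presheaves are discrete, so $X$ and $Y$ are $0$-truncated objects of $\PShv(\Sm_S)$. Closure of truncated maps under 1st-of-3, applied to $X \to Y \to *$, then shows $X \to Y$ is $0$-truncated. So pre-$\A^1$-fibrations between smooth schemes are automatically pre-$\A^1$-$0$-coverings, and the first part gives the claim.

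One point needs care. Orthogonality to zero sections in the hypothesis is meant in $\PShv(\Sm_S)$, whereas Morel's criterion for schemes is stated in $\Sm_S$. For representables these agree, because $\Sm_S \to \PShv(\Sm_S)$ is fully faithful and the mapping spaces are discrete, so the unique-lift condition coincides with contractible fillers. I expect the main subtlety overall to lie in Proposition \ref{a1_geom} itself, specifically the induction from zero sections to the maps $\iota\colon S \to \A^j \times S$. Since that result may be assumed, the remaining steps are formal.
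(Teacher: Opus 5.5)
Your proposal is correct and follows essentially the same route as the paper. The paper deduces the presheaf statement from Proposition \ref{a1_geom} together with \cite[Proposition 3.1]{macpherson}; you prove the direction of that citation which is actually needed by hand, namely that a base change of a map between $\A^1$-invariant presheaves is right orthogonal to all pre-$\A^1$-equivalences. The paper handles the smooth-scheme case as you do, via the Yoneda lemma and the fact that maps between representables are $0$-truncated.
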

\begin{proof}
    The non-parenthesized statement follows from Proposition \ref{a1_geom} together with \cite[Proposition 3.1]{macpherson}. The parenthesized statement follows from the Yoneda lemma and the fact that maps of schemes automatically yield $0$-truncated maps of presheaves.
\end{proof}
\printbibliography
\end{document}